\documentclass[11pt,letterpaper,twoside,reqno,nosumlimits,final]{amsart}

\usepackage[usenames,dvipsnames]{xcolor}
\usepackage{fancyhdr}
\usepackage{amsmath,amsfonts,amsbsy,amsgen,amscd,mathrsfs,amssymb,amsthm}
\usepackage{stackengine}
\usepackage{pifont}

\usepackage{mathtools}

\usepackage{graphicx}
\graphicspath{{figures/}}

\usepackage{tikz}
\usetikzlibrary{arrows,chains,matrix,positioning,scopes}
\usepackage{tikz-3dplot}

\usepackage{xcolor} %

\definecolor{cit}{rgb}{0.91,0.39,0.16}	%

\definecolor{dark-gray}{gray}{0.3}

\definecolor{dkgray}{rgb}{.3,.3,.3}
\definecolor{medgray}{rgb}{.5,.5,.5}
\definecolor{ltgray}{rgb}{.7,.7,.7}
\definecolor{dkblue}{rgb}{0,0,.5}
\definecolor{medblue}{rgb}{0,0,.75}
\definecolor{ltblue}{rgb}{0.97,0.97,1}
\definecolor{rust}{rgb}{0.5,0.1,0.1}
\definecolor{ltyellow}{rgb}{1, 1, 0.9}

\usepackage{soul}
\sethlcolor{ltyellow}
\newcommand{\hilite}[1]{\hl{#1}}

\usepackage[normalem]{ulem}

\usepackage[
style=alphabetic, citestyle=alphabetic,
giveninits=true,
sorting=nyt, sortcites=false,
autopunct=true, autolang=hyphen,
hyperref=true,
abbreviate=false,
backref=false,
backend=bibtex
]{biblatex}
\defbibheading{bibempty}{}

\renewbibmacro{in:}{%
  \ifentrytype{article}{}{\printtext{\bibstring{in}\intitlepunct}}}

\AtEveryBibitem{%
	\clearfield{issn}%
	\clearfield{isbn}%
	\clearlist{location}%
	\ifentrytype{book}%
		{%
		\clearfield{series}%
		\clearfield{volume}%
		\clearfield{pages}}%
		{%
		\clearname{editor}}%
}

\renewbibmacro*{doi+eprint+url}{%
  \newunit\newblock
  \iftoggle{bbx:doi}
    {\printfield{doi}}
    {}%
  \iftoggle{bbx:eprint}
    {\iffieldundef{doi}{\usebibmacro{eprint}}{}}
    {}%
  \newunit\newblock
  \iftoggle{bbx:url}
    {\iffieldundef{doi}{\iffieldundef{eprint}{\usebibmacro{url}}}} %
    {}}

\AtBeginBibliography{\footnotesize}				%

\usepackage{url} %
\makeatletter
\g@addto@macro{\UrlBreaks}{\UrlOrds}
\makeatother

\usepackage{hyperref}

\hypersetup{hidelinks,colorlinks=true,breaklinks=true,urlcolor=cit,citecolor=dkblue,linkcolor=dkblue,bookmarksopen=false,hypertexnames=false}

\usepackage{bookmark}
\bookmarksetup{
open,
numbered,
addtohook={%
\ifnum\bookmarkget{level}=0 %
\bookmarksetup{bold}%
\fi
\ifnum\bookmarkget{level}=-1 %
\bookmarksetup{color=cit,bold}%
\fi
}
}

\usepackage[full]{textcomp}

\usepackage[scaled=.98,sups,osf]{XCharter}%
\usepackage[scaled=1.04,varqu,varl]{inconsolata}%
\usepackage[type1,scaled=0.98]{cabin}%
\usepackage[utopia,vvarbb,scaled=1.07]{newtxmath}
\usepackage[cal=euler]{mathalfa}
\usepackage{bm} %

\usepackage{microtype} %
\usepackage[utf8]{inputenc} %
\usepackage[T1]{fontenc} %

\usepackage[english]{babel} %
\usepackage{setspace}

\usepackage{enumitem}

\setlist{noitemsep} %

\setlist[enumerate]{font=\sffamily\bfseries\footnotesize\textcolor{dkgray},label=\arabic*.}
\setlist[itemize]{font=\textcolor{dkgray},label=\small\textcolor{dkgray}\textbullet}

\usepackage{thmtools}
\usepackage[capitalize]{cleveref}

\numberwithin{equation}{section}

\theoremstyle{theorem}

\newtheorem{bigthm}{Theorem}

\newtheorem{theorem}{Theorem}[section]
\newtheorem{lemma}[theorem]{Lemma}

\newtheorem{proposition}[theorem]{Proposition}
\newtheorem{fact}[theorem]{Fact}

\newtheorem{corollary}[theorem]{Corollary}

\theoremstyle{definition}

\newtheorem{definition}[theorem]{Definition}
\newtheorem{example}[theorem]{Example}
\newtheorem{remark}[theorem]{Remark}

\newtheorem*{ideaT}{Theme}
\newtheorem*{problemT}{Problem}

\Crefname{proposition}{Proposition}{Propositions}

\RequirePackage[framemethod=default]{mdframed}

\newmdenv[skipabove=6pt,
skipbelow=6pt,
rightline=false,
leftline=true,
topline=false,
bottomline=false,
backgroundcolor=ltyellow,
linecolor=cit,
innerleftmargin=10pt,
innerrightmargin=10pt,
innertopmargin=0pt,
innerbottommargin=5pt,
leftmargin=0cm,
rightmargin=0cm,
linewidth=4pt]{iBox}

\newmdenv[skipabove=0pt,
skipbelow=0pt,
backgroundcolor=ltblue,
linecolor=dkblue,
linewidth=2pt,
rightline=false,
leftline=false,
topline=false,
bottomline=false,
innerleftmargin=7pt,
innerrightmargin=10pt,
innertopmargin=6pt,
innerbottommargin=6pt,
leftmargin=0cm,
rightmargin=0cm,
innerbottommargin=5pt]{aBox}

\newmdenv[skipabove=10pt,
skipbelow=10pt,
backgroundcolor=white,
linecolor=dkblue,
linewidth=0.5pt,
rightline=true,
leftline=true,
topline=true,
bottomline=true,
innerleftmargin=10pt,
innerrightmargin=0.5in,
innertopmargin=5pt,
innerbottommargin=5pt,
leftmargin=0cm,
rightmargin=0cm]{lfBox}

\usepackage{float}
\floatstyle{plaintop}

\numberwithin{figure}{section}
\numberwithin{table}{section}

\newfloat{recipe}{thp}{lor}
\floatname{recipe}{Recipe}
\numberwithin{recipe}{section}

\usepackage{caption}
\usepackage{subcaption}

\usepackage{booktabs,longtable,tabu} %
\usepackage{multirow} %

\usepackage{algorithm,algorithmicx}
\usepackage[noend]{style/myalgpseudocode}

\algrenewcommand\alglinenumber[1]{\sf\scriptsize\color{dkgray}{#1}}
\algrenewcommand\algorithmicrequire{\textbf{Input:}}
\algrenewcommand\algorithmicensure{\textbf{Output:}}

\algdef{SE}[SUBALG]{Indent}{EndIndent}{}{\algorithmicend\ }
\algtext*{Indent}
\algtext*{EndIndent}

\newcommand{\econst}{\mathrm{e}}
\newcommand{\iunit}{\mathrm{i}}

\renewcommand{\phi}{\varphi}

\newcommand{\onecirc}{\text{\ding{192}}}
\newcommand{\twocirc}{\text{\ding{193}}}

\newcommand{\vct}[1]{\bm{#1}}
\newcommand{\mtx}[1]{\bm{#1}}
\newcommand{\set}[1]{\mathsf{#1}}

\newcommand{\coll}[1]{\mathcal{#1}}

\newcommand{\term}[1]{\textit{\hilite{#1}}}

\newcommand{\N}{\mathbb{N}}
\newcommand{\Z}{\mathbb{Z}}

\newcommand{\R}{\mathbb{R}}

\newcommand{\C}{\mathbb{C}}

\newcommand{\M}{\mathbb{M}}
\newcommand{\Sym}{\mathbb{H}}

\newcommand{\sgn}{\operatorname{sgn}}
\renewcommand{\Re}{\operatorname{Re}}
\renewcommand{\Im}{\operatorname{Im}}

\newcommand{\ntr}{\operatorname{tr}}
\newcommand{\diag}{\operatorname{diag}}
\newcommand{\Id}{\mathbf{I}}

\newcommand{\spec}{\operatorname{spec}}
\newcommand{\northeast}{\operatorname{northeast}}

\newcommand{\abs}[1]{\vert {#1} \vert}
\newcommand{\norm}[1]{\Vert {#1} \Vert}
\newcommand{\ip}[2]{\langle {#1}, \ {#2} \rangle}

\newcommand{\lip}[2]{\left\langle {#1}, \ {#2} \right\rangle}
\newcommand{\labs}[1]{\left\vert {#1} \right\vert}
\newcommand{\lnorm}[1]{\left\Vert {#1} \right\Vert}

\newcommand{\dist}{\mathrm{dist}}

\newcommand{\diff}{\mathrm{d}}
\newcommand{\Diff}{\mathrm{D}}
\newcommand{\idiff}{\,\diff}
\newcommand{\ddx}[1]{\frac{\diff}{\diff{#1}}}

\newcommand{\Expect}{\operatorname{\mathbb{E}}}
\newcommand{\Var}{\operatorname{Var}}
\newcommand{\Cov}{\operatorname{Cov}}

\newcommand{\Probe}{\mathbb{P}}

\newcommand{\condbar}{\, \vert \,}
\newcommand{\lcondbar}{\, \big\vert \,}

\newcommand{\Varo}{\operatorname{Var_{\otimes}}}

\newcommand{\normal}{\textsc{normal}}
\newcommand{\uniform}{\textsc{uniform}}

\DeclareFontFamily{U}{matha}{\hyphenchar\font45}
\DeclareFontShape{U}{matha}{m}{n}{
  <-6> matha5 <6-7> matha6 <7-8> matha7
  <8-9> matha8 <9-10> matha9
  <10-12> matha10 <12-> matha12
  }{}

\DeclareSymbolFont{matha}{U}{matha}{m}{n}
\DeclareMathSymbol{\abscont}{3}{matha}{"CE}

\makeatletter
\def\paragraph{\@startsection{paragraph}{4}%
  \z@\z@{-\fontdimen2\font}%
  {\normalfont\scshape}}
\makeatother

\addglobalbib{Tro26-Universality.bib} %

\usepackage[foot]{amsaddr}

\title[Universality for Random Matrices]{Universality Laws for Random Matrices \\ via Exchangeable Counterparts}

\author{Joel A.~Tropp}
\address[JAT]{Department of Computing and Mathematical Science, Caltech, Pasadena, CA, USA.}
\email{jtropp@caltech.edu, https://tropp.caltech.edu}

\date{Research from August 2022.  Revised: 17 July 2024 and 29 February 2026.}

\subjclass[2020]{15B52,60B20}
\keywords{Random matrix, universality, Stein's method, exchangeable pair}

\begin{document}

\begin{abstract}
Recently, Brailovskaya \& van Handel (\textsl{GAFA}, 2024)
established a suite of nonasymptotic universality laws
which demonstrate that the spectral statistics
of an independent sum of random matrices
mirror the spectral statistics
of a Gaussian random matrix
with the same first- and second-order moments.
This paper develops a more elementary proof
of their main results
by means of a new implementation
of the method of exchangeable counterparts.
\end{abstract}

\maketitle

\section{Main results and related work}

Random matrix theory has become a central part of
contemporary mathematics---pure, applied, and computational.
Therefore, it has become increasingly urgent to develop systematic
techniques for studying the complicated random matrices
that arise in practice.

In a recent influential paper,
Brailovskaya \& van Handel~\cite{BvH24:Universality-Sharp}
proved that the spectral properties of
an independent sum of random matrices align with
the spectral properties of a Gaussian random matrix
that shares the same first- and second-order moments.
Their arguments rely on a difficult
implementation of Stein's method that requires infinite
cumulant expansions, M{\"o}bius inversion,
high-order derivatives of matrix functions,
and multivariate trace inequalities.

The purpose of this paper is to develop a more elementary
approach to these universality theorems.  Our arguments
are also based on techniques from Stein's method.
We have designed a new variant of the method of
exchangeable counterparts that allows us to avoid
the high-order expansions that are inherent in
the existing analysis.  Our techniques may make it
more transparent why the universality theorems
hold, and they may support future refinements and
extensions.

\subsection{Independent sums}

Consider an independent family $(\mtx{S}_1, \dots, \mtx{S}_n)$
of random \hilite{self-adjoint} matrices with common dimension $d$,
either real or complex.  Form the independent sum
\begin{equation} \label{eqn:indep-sum-intro}
\mtx{X} \coloneqq \sum_{i=1}^n \mtx{S}_i \in \Sym_d.
\end{equation}
The symbol $\Sym_d$ denotes the real-linear space of $d \times d$
self-adjoint matrices.
Our goal is to develop tools for understanding the spectral properties
of the independent sum $\mtx{X}$.
First, we wish to establish probability inequalities
for the spectral support that tell us where to find
eigenvalues of $\mtx{X}$.
Second, we want to approximate trace functions of the matrix $\mtx{X}$,
which provide information about the distribution of the eigenvalues.

For simplicity, we only treat random self-adjoint matrices.
We can establish similar results for the singular values
of random rectangular matrices using the self-adjoint
dilation~\cite[Sec.~2.1.17]{Tro15:Introduction-Matrix}.
We omit these standard arguments.

\subsection{The Gaussian proxy}

This paper establishes \term{universality theorems}---statements
that the spectral properties of the sum $\mtx{X}$ depend
primarily on the first- and second-order moments of the
summands but not strongly on their detailed distributions.

To that end, introduce a \hilite{Gaussian} self-adjoint matrix $\mtx{Z} \in \Sym_d$
that shares the same first- and second-order
moments with the independent sum $\mtx{X}$.  That is,
\begin{equation} \label{eqn:gauss-intro}
\mtx{Z} \sim \normal( \Expect[\mtx{X}], \Varo[\mtx{X}] ).
\end{equation}
We will argue that the spectral statistics of the Gaussian matrix
$\mtx{Z}$ mirror the spectral statistics of the independent sum $\mtx{X}$,
provided that each summand $\mtx{S}_i$ is relatively small.
These results are powerful because we gain access to a rich trove of
methods for studying Gaussian random matrix models.

Recall that a random self-adjoint matrix $\mtx{Z} \in \Sym_d$ %
is \term{Gaussian} when the real and imaginary parts
of its entries $\{ (\Re Z_{jk}, \Im Z_{jk}) : j, k = 1,\dots,d\}$
compose a jointly Gaussian family of real random variables.
A Gaussian self-adjoint matrix is fully characterized
by its entrywise expectation $\Expect[ \mtx{Z} ]$
and its variance tensor
\begin{equation} \label{eqn:Varo}
\Varo[ \mtx{Z} ] \coloneqq
	\Expect[ (\mtx{Z} - \Expect \mtx{Z}) \otimes (\mtx{Z} - \Expect \mtx{Z}) ],
\end{equation}
where $\otimes$ denotes the Kronecker product.
For example, see~\cite[Sec.~2.1.2]{BvH24:Universality-Sharp}
or~\cite[Sec.~2]{Tro26:Comparison-Theorems-max}.

\subsection{Random matrix statistics}
\label{sec:statistics-intro}

We frame the main results in terms of two simple statistics of
the independent sum model~\eqref{eqn:indep-sum-intro}.
In most cases, we can compute both statistics with short linear
algebra arguments.
First, the \term{matrix variance} is the statistic
\[
\sigma^2(\mtx{X}) \coloneqq \lnorm{ \Expect\big[ (\mtx{X} - \Expect \mtx{X})^2 \big] }
	= \lnorm{ \sum_{i=1}^n \Expect\big[ (\mtx{S}_i - \Expect \mtx{S}_i)^2 \big] }.
\]
The symbol $\norm{\cdot}$ denotes the \term{spectral norm},
also known as the $\ell_2$ operator norm.
The matrix variance describes the scale on which the independent sum
fluctuates around its expectation \hilite{as a matrix}.
Second, the \term{uniform bound} statistic for the summands is
\[
L(\mtx{X}) \coloneqq \sup \max\nolimits_i \norm{\mtx{S}_i - \Expect \mtx{S}_i}.
\]
This quantity reflects the maximum deviation of any summand from its
expected value.  The notation $L(\mtx{X})$ is slightly misleading
because the uniform bound statistic depends strongly on the decomposition
of $\mtx{X}$ into independent summands.

The matrix Bernstein inequality~\cite[Thm.~6.1.1]{Tro15:Introduction-Matrix}
states that these two statistics provide rough control on the spectral
norm of an independent sum $\mtx{X} \in \Sym_d$ of
random self-adjoint matrices: %
\begin{equation} \label{eqn:mtx-bernstein}
\tfrac{1}{2} \sqrt{\sigma^2(\mtx{X})}
	\quad\leq\quad \Expect \norm{\mtx{X} - \Expect \mtx{X}}
	\quad\leq\quad \sqrt{2 \sigma^2(\mtx{X}) \log(2d)}\ +\ \tfrac{1}{3} L(\mtx{X}) \log(2d).
\end{equation}
Up to constants, both inequalities are attained by specific types
of random matrices, but it can be hard to ascertain whether the
lower bound or the upper bound is closer to the truth.

For each positive number $q > 0$,
define the $\set{L}_q$ (quasi)norm of a random matrix $\mtx{Y}$ via the rule
\begin{equation} \label{eqn:Lq-norm}
\norm{\mtx{Y}}_{q} \coloneqq \left( \Expect \ntr \abs{\mtx{Y}}^q \right)^{1/q}.
\end{equation}
Here and elsewhere, $\ntr$ denotes the \term{normalized trace},
and $\abs{\mtx{A}} \coloneqq (\mtx{A}^*\mtx{A})^{1/2}$ returns the positive
part of the polar decomposition of a matrix $\mtx{A}$.
The function $q \mapsto \norm{\mtx{Y}}_q$ is increasing.

\subsection{Monomial moments}

The first main result states that the even-order moments of the
independent sum~\eqref{eqn:indep-sum-intro} are consistent with
the even-order moments of the Gaussian proxy~\eqref{eqn:gauss-intro}.

\begin{bigthm}[Monomial moments: Universality] \label{thm:mom-intro} %
Let $\mtx{X}$ be an independent sum~\eqref{eqn:indep-sum-intro}
of random self-adjoint matrices, and let $\mtx{Z}$ be the matching
Gaussian model~\eqref{eqn:gauss-intro}.
For each natural number $p \in \N$,
\[
\labs{ \norm{\mtx{X}}_{2p} - \norm{\mtx{Z}}_{2p} }
	\leq \big( 16 p^2 \sigma^2(\mtx{X}) L(\mtx{X}) \big)^{1/3} + 16 p L(\mtx{X}).
\]
Furthermore, when $p L^2(\mtx{X}) \leq \sigma^2(\mtx{X})$, %
\[
\labs{ \norm{\mtx{X}}_{2p} - \norm{\mtx{Z}}_{2p} }
	\leq 32 p^2 L(\mtx{X}). %
\]
The statistics $\sigma^2(\mtx{X})$ and $L(\mtx{X})$ are defined
in \cref{sec:statistics-intro}.
\end{bigthm}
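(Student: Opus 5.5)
We would argue by interpolation combined with exchangeable counterparts, working with the even moment $m(\mtx{Y}) \coloneqq \Expect \ntr \mtx{Y}^{2p}$. First, reduce to the centered case. Translation by $\Expect\mtx{X}$ is harmless because it enters both models identically; since the statement is about $\norm{\cdot}_{2p}$ differences, we will carry the mean along as a deterministic shift $\mtx{A}$ inside the interpolation. Then introduce the interpolant
\[
\mtx{Y}_t \coloneqq \mtx{A} + \sqrt{t}\,(\mtx{X}-\mtx{A}) + \sqrt{1-t}\,(\mtx{Z}-\mtx{A}),
\]
with $\mtx{Z}$ independent of $\mtx{X}$, and compute $\frac{\diff}{\diff t} m(\mtx{Y}_t)$.

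The Gaussian part is handled by Gaussian integration by parts, which produces a second-order term $\Expect \ntr[\mtx{W}\, \Diff^2 (\mtx{Y}^{2p})[\mtx{W}]]$ with $\mtx{W}$ an independent Gaussian copy. For the independent-sum part we use the exchangeable counterpart idea. Let $\mtx{S}_i'$ be an independent copy of $\mtx{S}_i$, and let $\mtx{X}^{(i)}$ be $\mtx{X}$ with $\mtx{S}_i$ replaced by $\mtx{S}_i'$. By exchangeability,
\[
\Expect \ntr[\mtx{S}_i F(\mtx{Y})] = \tfrac12 \Expect\ntr[(\mtx{S}_i-\mtx{S}_i')(F(\mtx{Y})-F(\mtx{Y}^{(i)}))].
\]
Apply this with $F(\mtx{Y}) = \mtx{Y}^{2p-1}$. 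The difference $F(\mtx{Y})-F(\mtx{Y}^{(i)})$ is exactly a telescoping sum $\sum_{k} \mtx{Y}^{k}\Delta\mtx{Y}^{(i)\,2p-2-k}$, so no Taylor remainder is needed. We then compare this exact expression with the Gaussian second-order term, whose summed covariance matches $\sum_i \Expect[(\mtx{S}_i-\mtx{S}_i')\otimes(\mtx{S}_i-\mtx{S}_i')]/2 = \Varo[\mtx{X}]$. The discrepancy comes only from replacing $\mtx{Y}^{(i)}$ by $\mtx{Y}$ in the right-hand factor, and that replacement costs one more factor of $\Delta = \sqrt{t}(\mtx{S}_i-\mtx{S}_i')$, of norm at most $2L$.

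The main obstacle is bounding this third-order discrepancy sharply. We need it in the form
\[
\labs{\tfrac{\diff}{\diff t} m(\mtx{Y}_t)} \lesssim p^3 L\, \sigma^2 \, \norm{\mtx{Y}_t}_{2p}^{2p-3}
\]
plus lower-order terms, rather than with dimension-dependent factors. The plan for this step has three parts. Write the error as a sum over positions of triple products $\ntr[\mtx{Y}^{a}\Delta\mtx{Y}^{b}\Delta\mtx{Y}^{c}\Delta]$ with $a+b+c=2p-3$. Pull out one $\Delta$ in operator norm, bounded by $2L$. Then control the remaining $\ntr[\mtx{Y}^{a}\Delta\mtx{Y}^{b}\Delta\mtx{Y}^{c}]$ by Hölder's inequality for the trace together with the matrix variance, using $\sum_i\Expect\Delta_i^2 \preccurlyeq 4\sigma^2\Id$ and a Riesz–Thorin-type interpolation for the intermediate exponents $b$. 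Because of the replacement $\mtx{Y}\leftrightarrow\mtx{Y}^{(i)}$, the resulting moments are of $\mtx{Y}$ and $\mtx{Y}^{(i)}$, which again differ by $2L$. We absorb this via $\norm{\mtx{Y}^{(i)}}_{2p}\le\norm{\mtx{Y}}_{2p}+2L$, and that absorption is where the additive $pL$ term originates.

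Finally, integrate the differential inequality. Set $u(t) = \norm{\mtx{Y}_t}_{2p}$. Dividing by $2p\,u^{2p-1}$ gives
\[
\labs{u'} \lesssim p^2 L\sigma^2/u^{2} + pL.
\]
Integrating over $t\in[0,1]$ and comparing $u(0)=\norm{\mtx{Z}}_{2p}$ with $u(1)=\norm{\mtx{X}}_{2p}$ splits into two regimes. If $u\gtrsim (p^2\sigma^2L)^{1/3}$ throughout, the difference is at most $(p^2\sigma^2 L)^{1/3}+pL$. Otherwise both norms are already of that size. This yields the first bound, with constants tracked to reach $16$. For the second bound we use the Gaussian lower bound $\norm{\mtx{Z}}_{2p}\gtrsim\sigma/\sqrt{p}$, which holds in the spirit of \eqref{eqn:mtx-bernstein} and persists along the interpolation once $pL^2\le\sigma^2$. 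Under that bound the first term is at most $p^2L\sigma^2/(\sigma^2/p)=p^3L/\!\cdot$; more carefully, one factor of $p$ cancels, giving $\labs{u'}\lesssim p^2L$, which integrates to $32p^2L$.
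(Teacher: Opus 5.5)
Your overall architecture (interpolation, Gaussian IBP, one-summand exchangeable replacement, a third-order error, and a differential inequality for $\norm{\mtx{Y}_t}_{2p}$) is the paper's. However, the two steps that carry the weight do not work as you describe them.

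First, replacing $\mtx{Y}^{(i)}$ by $\mtx{Y}$ does not turn the exchangeable-pair expression into the Gaussian term. The quantity $\Expect\ntr[(\mtx{S}_i-\mtx{S}_i')\mtx{Y}^k(\mtx{S}_i-\mtx{S}_i')\mtx{Y}^{2p-2-k}]$ is not the pairing of $\Expect\Diff f(\mtx{Y})$ with $\tfrac12\Expect[(\mtx{S}_i-\mtx{S}_i')\otimes(\mtx{S}_i-\mtx{S}_i')]$, because $\mtx{Y}$ still contains $\mtx{S}_i$. You need a base point independent of $(\mtx{S}_i,\mtx{S}_i')$. That is why the paper draws a second copy $\mtx{S}_I''$, adds and subtracts $\Diff f(\mtx{Y}'')$, and obtains two error terms with mixed arguments $(\mtx{Y},\mtx{Y}',\mtx{Y}'')$ and $(\mtx{Y},\mtx{Y}'',\mtx{Y}'')$ (\cref{thm:discrete-ibp-mtx}).

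Second, and more seriously, in those error terms the differences $\mtx{S}_i-\mtx{S}_i'$ are correlated with the powers. So you cannot insert the deterministic bound $\sum_i\Expect(\mtx{S}_i-\mtx{S}_i')^2\preccurlyeq 2\sigma^2\Id$, which requires averaging the squared differences with the powers held fixed.
\begin{itemize}
\item Your absorption $\norm{\mtx{Y}^{(i)}}_{2p}\le\norm{\mtx{Y}}_{2p}+2L$ does not touch this. The two matrices are equidistributed, so their norms already agree; the obstruction is inside the trace.
\item The scalar device of bounding $\abs{Y}^q$ pointwise by $(\abs{Y_{-i}}+L)^q$ and then factoring has no matrix analog. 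Already $\abs{\mtx{A}+\mtx{B}}^2\preccurlyeq(\abs{\mtx{A}}+\norm{\mtx{B}})^2$ fails for $2\times2$ matrices.
\item Applying H\"older to each difference separately yields $\sum_i\norm{\mtx{S}_i-\mtx{S}_i'}_{2p}^3$, which can exceed $\sigma^2L$ by a dimensional factor.
\end{itemize}
The paper instead consolidates all powers onto a single $\abs{\mtx{Y}}^{2p-3}$, using Cauchy--Schwarz in $\set{S}_1$, the matrix GM--AM inequality, and exchangeability (\cref{prop:mtx-solid,prop:mtx-solid-exch}). It then applies H\"older with the sum over $i$ kept inside the $\set{L}_{2p/3}$ norm, which produces the \emph{random} statistic $M_{2,p}(\mtx{X})L_{2p}(\mtx{X})$ (\cref{lem:triple-product-bd}). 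The resulting inequality $\abs{\dot u}\lesssim p^3\sqrt{t}\,M_{2,p}L_{2p}\,u^{1-3/(2p)}$ has no additive term and integrates directly for $\norm{\mtx{Y}_t}_{2p}^3$. Only afterwards does the matrix Rosenthal inequality give $M_{2,p}\le 4\sigma_{2p}^2+4pL_{2p}^2$. The additive $pL$ in the theorem is the cube root of the resulting $p^3L^3$ term, not an absorption effect. Nothing in your proposal actually produces $\abs{u'}\lesssim p^2L\sigma^2/u^2+pL$.

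For the second bound, the lower bound $\norm{\mtx{Z}}_{2p}\gtrsim\sigma/\sqrt{p}$ is false for the normalized trace. Take $\mtx{Z}=g\,\vct{e}_1\vct{e}_1^*$ with $g$ standard normal: then $\sigma=1$, yet $\norm{\mtx{Z}}_{2p}=d^{-1/(2p)}(\Expect g^{2p})^{1/(2p)}\to0$ as $d\to\infty$. The estimate \eqref{eqn:mtx-bernstein} concerns the spectral norm, and Jensen only gives $\norm{\mtx{Y}_t}_{2p}\ge\sigma_{2p}(\mtx{X})$. Moreover, even granting your bound, the first term would become $p^3L$, and nothing justifies cancelling a factor of $p$.

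The paper needs no lower bound along the path. From the cube estimate it uses $\abs{a^3-b^3}\ge\abs{a-b}\,b^2$ at the endpoint $b=\norm{\mtx{Z}}_{2p}\ge\sigma_{2p}$. This gives $\abs{a-b}\le 8p^2L_{2p}\big(1+pL_{2p}^2/\sigma_{2p}^2\big)$, which is why this step appears as \cref{cor:mom-univ} with hypothesis $pL_{2p}^2\le\sigma_{2p}^2$.
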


\noindent
\Cref{thm:mom-intro} is a variant of \cite[Thm.~2.9]{BvH24:Universality-Sharp}.
The proof appears in \cref{sec:mono-mom}, where we establish a result with more
refined statistics.

\Cref{thm:mom-intro} is informative when $L(\mtx{X}) \ll \sigma(\mtx{X})$;
this condition ensures that no summand makes an outsize contribution to the sum.
With additional work, the result implies a probabilistic comparison
between the spectral norm $\norm{\mtx{X}}$ of the independent sum
and the spectral norm $\norm{\mtx{Z}}$ of the Gaussian proxy.
It also yields some information about the agreement between their
eigenvalue distributions.
We will not elaborate on these points because more detailed statements
follow from the other main results.

\subsection{Mean spectral distribution}

For a random self-adjoint matrix $\mtx{Y} \in \Sym_d$, %
the \term{mean spectral distribution (msd)} is
the unique probability measure
$\overline{\mu}_{\mtx{Y}}$ on the real line
that is determined by the equations
\begin{equation} \label{eqn:msd}
\Expect \ntr h(\mtx{Y})
	\coloneqq \frac{1}{d} \sum_{i=1}^d \Expect h(\lambda_i(\mtx{Y}))
	\eqqcolon \int_{\R} h(y) \, \overline{\mu}_{\mtx{Y}}(\diff{y})
	\quad\text{for all $h : \R \to \R_+$.}
\end{equation}
The eigenvalues $\lambda_i$ are arranged in decreasing order,
and we require that the positive function $h$ is Borel measurable.
Instead of working directly with the msd,
let us define the \term{Cauchy transform}:
\begin{equation} \label{eqn:cauchy-xform}
G_\zeta(\mtx{Y}) \coloneqq \Expect \ntr\big[ (\zeta \Id_d - \mtx{Y})^{-1} \big]
\quad\text{for each $\zeta \in \C \setminus \R$.}
\end{equation}
The Cauchy transform fully determines the msd
of the random matrix~\cite[Sec.~2.4.3]{Tao12:Topics-Random}

The second main result states that the Cauchy transform of the
independent sum~\eqref{eqn:indep-sum-intro} is consistent with the
Cauchy transform of the Gaussian proxy~\eqref{eqn:gauss-intro}.

\begin{bigthm}[Cauchy transform: Universality] \label{thm:cauchy-xform}
Let $\mtx{X}$ be an independent sum~\eqref{eqn:indep-sum-intro}
of random self-adjoint matrices, and let $\mtx{Z}$ be the matching
Gaussian model~\eqref{eqn:gauss-intro}.
Then their Cauchy transforms satisfy
\[
\abs{ G_\zeta( \mtx{X} ) - G_\zeta(\mtx{Z}) }
	\leq \frac{4 \sigma^2(\mtx{X}) L(\mtx{X})}{\abs{\Im \zeta}^4.}
	\quad\text{for each $\zeta \in \C \setminus \R$.}
\]
The statistics $\sigma^2(\mtx{X})$ and $L(\mtx{X})$ are defined
in \cref{sec:statistics-intro}.
\end{bigthm}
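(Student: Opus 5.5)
We compare the resolvents through a smart-path interpolation, and the title suggests exchangeable counterparts; the plan below is an interpolation/Lindeberg–Stein argument that only ever needs third derivatives of the resolvent. Both Cauchy transforms are invariant under a common shift of mean, so we assume without loss of generality that $\Expect \mtx{S}_i = \mtx{0}$ for each $i$ and that $\Expect \mtx{Z} = \mtx{0}$. We realize $\mtx{Z} = \sum_i \mtx{Z}_i$, where the $\mtx{Z}_i \sim \normal(\mtx{0}, \Varo[\mtx{S}_i])$ are independent Gaussian matrices, independent of the $\mtx{S}_i$. Fix $\zeta$ with $\eta := \abs{\Im \zeta} > 0$ and set
\[
F(\mtx{A}) := \ntr\big[(\zeta \Id_d - \mtx{A})^{-1}\big].
\]
Its directional derivatives are traces of alternating products of resolvents and directions. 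Since $\norm{(\zeta\Id - \mtx{A})^{-1}} \leq \eta^{-1}$, the $k$th derivative in directions $\mtx{H}_1, \dots, \mtx{H}_k$ is bounded by $k!\,\eta^{-k-1}$ times a trace of a product of the $\mtx{H}_j$, and for $k = 3$ this gives the factor $\eta^{-4}$ appearing in \cref{thm:cauchy-xform}.

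\textbf{Interpolation.} Consider the path
\[
\mtx{Y}(t) = \sqrt{t}\,\mtx{X} + \sqrt{1-t}\,\mtx{Z} = \sum_i \mtx{Y}_i(t), \qquad \mtx{Y}_i(t) = \sqrt{t}\,\mtx{S}_i + \sqrt{1-t}\,\mtx{Z}_i.
\]
Differentiating,
\[
\frac{\diff}{\diff t} \Expect F(\mtx{Y}(t)) = \frac12 \sum_i \Expect\Big[ \Diff F(\mtx{Y})\big[ t^{-1/2}\mtx{S}_i - (1-t)^{-1/2}\mtx{Z}_i \big] \Big].
\]

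\emph{Gaussian term.} Gaussian integration by parts in $\mtx{Z}_i$ yields $(1-t)^{-1/2}\Expect \Diff F(\mtx{Y})[\mtx{Z}_i] = \Expect \Diff^2 F(\mtx{Y})[\mtx{Z}_i', \mtx{Z}_i']$, where $\mtx{Z}_i'$ is an independent copy of $\mtx{Z}_i$ in the covariance sense.

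\emph{Non-Gaussian term.} Write $\mtx{Y} = \mtx{Y}^{(i)} + \sqrt{t}\,\mtx{S}_i$, where $\mtx{Y}^{(i)}$ omits $\sqrt{t}\,\mtx{S}_i$ and is independent of $\mtx{S}_i$. Taylor expand $\Diff F$ about $\mtx{Y}^{(i)}$ to second order with integral remainder. The zeroth-order term vanishes because $\Expect \mtx{S}_i = \mtx{0}$. The first-order term is $\sqrt{t}\,\Expect \Diff^2F(\mtx{Y}^{(i)})[\mtx{S}_i,\mtx{S}_i]$, which equals $\sqrt{t}\,\Expect \Diff^2F(\mtx{Y}^{(i)})[\mtx{Z}_i',\mtx{Z}_i']$ because $\Diff^2 F$ is bilinear, so only the variance tensor enters. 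We then replace $\mtx{Y}^{(i)}$ by $\mtx{Y}$ in the Gaussian term as well; this costs another third-derivative term in the direction $\sqrt{1-t}\,\mtx{Z}_i$. If we prefer to avoid that, we can instead use the exchangeable-pair device: let $\mtx{Y}'$ be $\mtx{Y}$ with the $i$th summand resampled, so that only $\mtx{S}$-type increments appear. Either way, the second-order terms cancel exactly, and what remains is a sum over $i$ of remainders of the form
\[
t\,\Expect\, \Diff^3 F(\mtx{W})[\mtx{S}_i, \mtx{S}_i, \mtx{S}_i]
\]
at intermediate points $\mtx{W}$, plus analogous terms.

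\textbf{Main obstacle.} The crux is bounding $\sum_i \Expect \abs{\Diff^3F(\mtx{W})[\mtx{S}_i,\mtx{S}_i,\mtx{S}_i]}$ by $\sigma^2 L \eta^{-4}$ rather than by $d\cdot(\ldots)$ or by $\sum_i \Expect\norm{\mtx{S}_i}^3$. The third derivative is a sum of terms $\ntr[\mtx{R}\mtx{S}_i\mtx{R}\mtx{S}_i\mtx{R}\mtx{S}_i\mtx{R}]$ with $\mtx{R}$ a resolvent. We peel off one factor via $\norm{\mtx{S}_i} \leq L$ and bound the rest with Hölder/Cauchy--Schwarz for the normalized trace:
\[
\abs{\ntr[\mtx{A}\mtx{S}_i\mtx{B}\mtx{S}_i]} \leq \big(\ntr[\mtx{A}\mtx{S}_i^2\mtx{A}^*]\big)^{1/2}\big(\ntr[\mtx{B}^*\mtx{S}_i^2\mtx{B}]\big)^{1/2}.
\]
Here we must take care that the factors which carry the dependence on $\mtx{S}_i$ through $\mtx{W}$ are controlled only in operator norm. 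After summing over $i$ and applying Cauchy--Schwarz across $i$, the quantity $\sum_i \Expect \mtx{S}_i^2$ appears sandwiched between resolvents and is bounded by $\sigma^2\eta^{-2}$. The remaining difficulty is that $\mtx{W}$ depends on $\mtx{S}_i$, which blocks taking expectation of $\mtx{S}_i^2$ inside. We will handle this by conditioning: write the intermediate point as $\mtx{Y}^{(i)} + u\sqrt{t}\,\mtx{S}_i$ and apply the trace Cauchy--Schwarz with the operator-norm resolvent bound in the right place. This gives a pointwise bound of $\norm{\mtx{S}_i}\,\ntr[\mtx{S}_i^2]\,\eta^{-4}$, and $\sum_i \Expect \ntr\mtx{S}_i^2 \leq \norm{\sum_i \Expect \mtx{S}_i^2} = \sigma^2$. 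Finally, integrating in $t$ absorbs the weights $t^{-1/2}\cdot\sqrt t$, and the combinatorial factor $3!/2$ plus the second remainder gives a constant of at most $4$.
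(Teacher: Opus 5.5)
Your argument is correct, but it follows the classical Lindeberg--Stein interpolation rather than the paper's method. You split $\mtx{Z}=\sum_i\mtx{Z}_i$, Taylor-expand $\Diff F$ about the leave-one-out point $\mtx{Y}^{(i)}$, and control third Fr{\'e}chet derivatives of $\ntr R_\zeta$ at intermediate points $\mtx{Y}^{(i)}+u\sqrt{t}\,\mtx{S}_i$. The paper never splits $\mtx{Z}$ and never Taylor-expands. It uses the exchangeable triple $(\mtx{X},\mtx{X}',\mtx{X}'')$, obtained by resampling one random summand (the ``exchangeable-pair device'' you mention only as a fallback), together with the exact matrix-difference calculus. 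So once the Gaussian IBP term cancels, $\dot{u}(t)$ is \emph{exactly} a sum of second matrix differences of $f=R_\zeta^2$ at $(\mtx{Y}_t,\mtx{Y}_t',\mtx{Y}_t'')$ and $(\mtx{Y}_t,\mtx{Y}_t'',\mtx{Y}_t'')$. These are three products $R^q\mtx{H}_1R^r\mtx{H}_2R^s$ with $q+r+s=4$, and every direction is a difference of independent copies of a single summand. H{\"o}lder then gives the intermediate bound $M_3(\mtx{X})/\abs{\Im\zeta}^4$ of \cref{thm:univ-cauchy}, stated in a pure third-moment statistic; the uniform bound enters only at the end, through $M_3\le 4\sigma^2 L$.

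Your route is more familiar and, done carefully, gives constant $3$ instead of $4$. However, it produces mixed Gaussian/non-Gaussian remainders that you must control with $\norm{\mtx{S}_i}\le L$ right away, so it does not directly give the refined $M_3$ form. More importantly, the paper's route is the one that extends to the moment and resolvent-norm theorems. There, exchangeability lets every factor be consolidated at $\mtx{Y}_t$ and a differential inequality be closed, which the intermediate points $\mtx{Y}^{(i)}+u\sqrt{t}\,\mtx{S}_i$ do not directly permit.

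Three points in your write-up need repair, none of them serious.

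First, the Cauchy transform is \emph{not} invariant under a shift by a non-scalar matrix $\mtx{M}=\Expect\mtx{X}$. Instead, keep the mean inside the function, $F(\mtx{A})=\ntr[(\zeta\Id-\mtx{M}-\mtx{A})^{-1}]$. This satisfies the same resolvent bounds because $\mtx{M}$ is self-adjoint.

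Second, since $\mtx{Y}-\mtx{Y}^{(i)}=\sqrt{t}\,\mtx{S}_i$, moving the Gaussian term $\Expect\Diff^2F(\mtx{Y})[\mtx{Z}_i',\mtx{Z}_i']$ to $\mtx{Y}^{(i)}$ costs $\sqrt{t}\int_0^1\Expect\Diff^3F(\mtx{Y}^{(i)}+u\sqrt{t}\,\mtx{S}_i)[\mtx{S}_i,\mtx{Z}_i',\mtx{Z}_i']\idiff{u}$. This remainder is in the direction $\mtx{S}_i$, not $\sqrt{1-t}\,\mtx{Z}_i$. The distinction matters: with $\eta=\abs{\Im\zeta}$, you bound the integrand by $6\eta^{-4}\norm{\mtx{S}_i}\ntr(\mtx{Z}_i')^2$, using $\norm{\mtx{S}_i}\le L$ and $\Expect\ntr(\mtx{Z}_i')^2=\ntr\Expect\mtx{S}_i^2$. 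No uniform bound is available for $\mtx{Z}_i$.

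Third, your ``main obstacle'' dissolves. H{\"o}lder for the normalized trace gives the pointwise bound $\abs{\ntr[\mtx{R}_1\mtx{S}\mtx{R}_2\mtx{S}\mtx{R}_3\mtx{S}\mtx{R}_4]}\le\eta^{-4}\ntr\abs{\mtx{S}}^3\le\eta^{-4}\norm{\mtx{S}}\ntr\mtx{S}^2$, wherever the resolvents are evaluated. So no conditioning or Cauchy--Schwarz over $i$ is needed.

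Altogether, $\abs{\dot{u}(t)}\le\frac{\sqrt{t}}{2\eta^4}\sum_i\big(3\Expect\ntr\abs{\mtx{S}_i}^3+6L\ntr\Expect\mtx{S}_i^2\big)\le\frac{9\sqrt{t}}{2\eta^4}\sigma^2L$. Integrating over $t\in[0,1]$ gives $3\sigma^2L/\eta^4$.
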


\noindent
\Cref{thm:cauchy-xform} is a variant of
\cite[Thm.~2.10 and Rem.~6.13]{BvH24:Universality-Sharp}.
The proof appears in \cref{sec:cauchy-xform}.

\Cref{thm:cauchy-xform} implies a universality result for
smooth spectral functions.
This claim follows from an argument of
Haagerup \& Thorbj{\o}rnsen~\cite[Sec.~6]{HT05:New-Application},
summarized in~\cite[Sec.~5.2.3]{BvH24:Universality-Sharp}.

\begin{corollary}[Spectral functions: Universality]
Instate the assumptions of \cref{thm:cauchy-xform}.
For each sufficiently smooth function $h : \R \to \R$,
\[
\labs{ \Expect \ntr h(\mtx{X}) - \Expect\ntr h(\mtx{Z}) }
	\lesssim \sigma^2(\mtx{X}) L(\mtx{X}) \cdot \norm{h}_{\set{W}_{1,5}}.
\]
The order $\lesssim$ suppresses universal constants,
and the $\set{W}_{1,5}$ Sobolev norm returns the total of the $\set{L}_1$ norms
of the first $5$ derivatives.
\end{corollary}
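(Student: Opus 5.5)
The plan is the Haagerup--Thorbj{\o}rnsen route: write $h$ as a suitable integral of Cauchy transforms and apply \cref{thm:cauchy-xform} pointwise under the integral. Since $\mtx{X}$ and $\mtx{Z}$ need not have bounded spectra (the Gaussian never does), we work with $h$ smooth and compactly supported, or with enough decay that every integral below converges absolutely; the general ``sufficiently smooth'' case then follows by approximation in $\set{W}_{1,5}$ together with the moment bounds for $\mtx{X}$ and $\mtx{Z}$.

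\textbf{Step 1 (almost-analytic extension).} Represent $h$ through an almost-analytic extension of order $k=4$,
\[
\tilde h(x+\iunit y) \coloneqq \sum_{j=0}^{4} \frac{(\iunit y)^j}{j!}\, h^{(j)}(x)\, \chi(y),
\]
where $\chi$ is a smooth cutoff equal to $1$ near $0$ and supported in $\abs{y}\le 2$. The Helffer--Sj{\"o}strand formula gives, for every self-adjoint $\mtx{Y}$,
\[
h(\mtx{Y}) = \frac{1}{\pi}\int_{\C} \bar\partial \tilde h(\zeta)\,(\zeta\Id_d-\mtx{Y})^{-1}\,\diff^2\zeta .
\]
Apply $\Expect\ntr$ to both sides, using Fubini, which is justified because $\abs{\bar\partial\tilde h(\zeta)}\,\norm{(\zeta\Id_d-\mtx{Y})^{-1}} \le \abs{\bar\partial\tilde h(\zeta)}/\abs{\Im\zeta}$ is integrable. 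This yields
\[
\Expect\ntr h(\mtx{X})-\Expect\ntr h(\mtx{Z}) = \frac{1}{\pi}\int_{\C}\bar\partial\tilde h(\zeta)\,\big(G_\zeta(\mtx{X})-G_\zeta(\mtx{Z})\big)\,\diff^2\zeta .
\]

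\textbf{Step 2 (compute $\bar\partial\tilde h$).} By the telescoping built into the Taylor form,
\[
2\,\bar\partial\tilde h = \frac{(\iunit y)^4}{4!}\,h^{(5)}(x)\,\chi(y) \;+\; \iunit \sum_{j\le 4}\frac{(\iunit y)^j}{j!}\,h^{(j)}(x)\,\chi'(y).
\]
The second group is supported where $1\lesssim\abs{y}\le 2$, since $\chi'$ vanishes near $0$.

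\textbf{Step 3 (bound the integral).} Insert \cref{thm:cauchy-xform}, namely $\abs{G_\zeta(\mtx{X})-G_\zeta(\mtx{Z})}\le 4\sigma^2(\mtx{X})L(\mtx{X})\abs{y}^{-4}$.

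For the first term, the factor $y^4$ exactly cancels the $\abs{y}^{-4}$ singularity. What remains is bounded by $\int\abs{h^{(5)}}\,\diff x\cdot\int_{\abs{y}\le 2}\diff y$.

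For the $\chi'$ terms, $\abs{y}$ is bounded away from $0$, so $\abs{y}^{-4}\lesssim 1$. These terms are then bounded by $\sum_{j\le4}\norm{h^{(j)}}_{\set{L}_1}$.

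Together this gives $\lesssim \sigma^2(\mtx{X})L(\mtx{X})\sum_{j\le 5}\norm{h^{(j)}}_{\set{L}_1}$. For the stated $\set{W}_{1,5}$ norm built from the first five derivatives only, the $j=0$ term is removed in the usual way: either apply the formula to $h$ minus a fixed normalizing function, or express the $j=0$ contribution through an integration by parts in $x$.

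\textbf{Main obstacle.} The crux is the fourth-order singularity $\abs{\Im\zeta}^{-4}$ in \cref{thm:cauchy-xform}. It forces the extension to have order exactly $4$, so that $\bar\partial\tilde h$ vanishes to order $y^4$ at the real axis; this is why five derivatives appear. The remaining work is bookkeeping: checking integrability near $y=0$ and handling the lower-order derivative terms to reach precisely the $\set{W}_{1,5}$ norm.
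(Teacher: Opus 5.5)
Your Helffer--Sj{\"o}strand argument is correct, and it is essentially the mechanism the paper invokes by citation. Haagerup--Thorbj{\o}rnsen work on vertical lines rather than with a planar cutoff. They write the boundary value of $y \mapsto \int h(x)\,[G_{x+\iunit y}(\mtx{X}) - G_{x+\iunit y}(\mtx{Z})] \idiff{x}$ as a $y$-integral against a weight of the form $y^4 \econst^{-y}/4!$, with $h$ replaced by $(1+\iunit\Diff)^5 h$. In both versions the Cauchy--Riemann equations trade the $\abs{\Im\zeta}^{-4}$ singularity for five derivatives of $h$. (The prefactor in your Helffer--Sj{\"o}strand formula should be $-1/\pi$, which is immaterial for the bound.)

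The one step I would not accept as written is the last sentence of Step 3.

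\emph{Standard reading of the norm.} Your estimate contains $\norm{h}_{\set{L}_1}$, and so does the Haagerup--Thorbj{\o}rnsen bound. If $\norm{h}_{\set{W}_{1,5}}$ is read in the standard way as $\sum_{j=0}^5 \norm{\Diff^j h}_{\set{L}_1}$, you are already done and should drop that sentence.

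\emph{Removing the $j=0$ term.} If the norm is meant to involve only $\Diff h, \dots, \Diff^5 h$, your justification is inadequate.
\begin{itemize}
\item Subtracting a fixed normalizing function $c\rho$ fails. It changes the left-hand side by $c\,(\Expect\ntr\rho(\mtx{X}) - \Expect\ntr\rho(\mtx{Z}))$, which need not vanish. The only functions the difference annihilates are constants, and they are not in $\set{L}_1$.
\item Integration by parts in $x$ does work, but it needs an input you did not supply. For $\Im\zeta > 0$, set $K(\zeta) \coloneqq -\iunit\int_0^\infty [G_{\zeta+\iunit s}(\mtx{X}) - G_{\zeta+\iunit s}(\mtx{Z})]\idiff{s}$. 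This is an analytic antiderivative of $\zeta \mapsto G_\zeta(\mtx{X}) - G_\zeta(\mtx{Z})$. Integrating the bound of \cref{thm:cauchy-xform} in $s$ gives $\abs{K(\zeta)} \leq \tfrac{4}{3}\sigma^2(\mtx{X})L(\mtx{X})\abs{\Im\zeta}^{-3}$.
\end{itemize}
With $K$ in hand, $\int h(x)\,[G_{x+\iunit y}(\mtx{X}) - G_{x+\iunit y}(\mtx{Z})]\idiff{x} = -\int \Diff h(x)\,K(x+\iunit y)\idiff{x}$. On the region where $\chi'(y) \neq 0$, the $j=0$ contribution is therefore at most a constant times $\sigma^2(\mtx{X})L(\mtx{X})\norm{\Diff h}_{\set{L}_1}$. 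The lower half-plane follows by conjugation, since $h$ is real and $G_{\zeta^*}(\mtx{Y}) = G_\zeta(\mtx{Y})^*$.
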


\subsection{Spectral support}

The msd does not control individual eigenvalues, but only their
overall distribution.
To obtain more precise information about the location of the
spectrum, we work with resolvent matrices.
For a self-adjoint matrix $\mtx{A} \in \Sym_d$, %
the \term{resolvents} are the normal matrices
\[
R_\zeta(\mtx{A}) \coloneqq (\zeta \Id_d - \mtx{A})^{-1}
\quad\text{for each $\zeta \in \C \setminus \R$.}
\]
The size of the resolvent $R_{\zeta}(\mtx{A})$ reflects (the reciprocal of)
the distance from $\zeta$ to the eigenvalues of $\mtx{A}$.

The last main result states that $\set{L}_p$ norms
of the resolvents of the independent sum~\eqref{eqn:indep-sum-intro} are
consistent with the $\set{L}_p$ norms of the resolvents of the Gaussian
proxy~\eqref{eqn:gauss-intro}.

\begin{bigthm}[Resolvent norm: Universality] \label{thm:resolvent-norm}
Let $\mtx{X}$ be an independent sum~\eqref{eqn:indep-sum-intro}
of random self-adjoint matrices, %
and let $\mtx{Z}$ be the matching Gaussian model~\eqref{eqn:gauss-intro}.
For each $\zeta \in \C \setminus \R$ and for each natural number $p \in \N$,
\[
\labs{ \norm{ R_{\zeta}(\mtx{X}) }_{2p} - \norm{ R_{\zeta}( \mtx{Z} ) }_{2p} }
	\leq \frac{ 48 p^2 \sigma^2(\mtx{X}) L(\mtx{X}) + 72 p^3 L(\mtx{X})^3}{\abs{\Im \zeta}^4}.
\]
The statistics $\sigma^2(\mtx{X})$ and $L(\mtx{X})$ are defined
in \cref{sec:statistics-intro}.
\end{bigthm}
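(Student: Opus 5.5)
We propose to compare $\Expect \ntr \abs{R_\zeta(\mtx{X})}^{2p}$ with $\Expect \ntr \abs{R_\zeta(\mtx{Z})}^{2p}$ through an interpolation $\mtx{Y}_t \coloneqq \sqrt{t}\,\mtx{X} + \sqrt{1-t}\,\mtx{Z}'$. Here $\mtx{Z}'$ is a copy of the Gaussian proxy, independent of $\mtx{X}$, and we center both so that $\Expect \mtx{X}$ enters deterministically. We then differentiate the trace function
\[
F(\mtx{A}) \coloneqq \ntr \big[ (R_\zeta(\mtx{A})^* R_\zeta(\mtx{A}))^{p} \big] = \ntr \big[ (R_{\bar\zeta}(\mtx{A}) R_\zeta(\mtx{A}))^{p} \big]
\]
along the path. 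Because $R_\zeta(\mtx{A})$ and $R_{\bar\zeta}(\mtx{A})$ commute, this is a polynomial trace function of resolvents. Its derivatives are sums of traces of alternating products of resolvents and increments. The Gaussian part of $\frac{\diff}{\diff t}\Expect F(\mtx{Y}_t)$ is handled by Gaussian integration by parts. It produces the second-order term $\Expect \sum_i \Diff^2 F(\mtx{Y}_t)[\mtx{S}_i - \Expect \mtx{S}_i, \mtx{S}_i - \Expect \mtx{S}_i]$, up to the factor $\tfrac{1}{2}$ from the $\sqrt{t}$ scaling.

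For the independent sum we use the exchangeable-counterpart device. Let $\mtx{S}_i'$ be an independent copy of $\mtx{S}_i$, and let $\mtx{X}^{(i)} \coloneqq \mtx{X} - \mtx{S}_i + \mtx{S}_i'$. Then $\Expect \langle \mtx{S}_i - \Expect\mtx{S}_i, \nabla F(\mtx{Y})\rangle$ can be written as $\tfrac12 \Expect \langle \mtx{S}_i - \mtx{S}_i', \nabla F(\mtx{Y}) - \nabla F(\mtx{Y}^{(i)})\rangle$. We Taylor expand this difference to second order along the segment from $\mtx{Y}^{(i)}$ to $\mtx{Y}$. The second-order term matches the Gaussian one exactly, because $\tfrac12\Expect (\mtx{S}_i-\mtx{S}_i')^{\otimes 2} = \Varo[\mtx{S}_i]$; the only subtlety is that it is evaluated at a point that differs from $\mtx{Y}$ by one summand. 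Two error terms remain: a third-derivative remainder, and the discrepancy from replacing $\Diff^2F$ at the perturbed point by $\Diff^2F$ at $\mtx{Y}$. Both are controlled by one extra increment of norm at most $2L(\mtx{X})$.

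The key estimate is a bound on the third derivative. For directions $\mtx{H}_1,\mtx{H}_2,\mtx{H}_3$, $\Diff^3 F(\mtx{A})[\mtx{H}_1,\mtx{H}_2,\mtx{H}_3]$ is a sum of $O(p^3)$ traces. Each trace contains $2p+3$ resolvent factors interleaved with the three $\mtx{H}$'s. We bound one $\mtx{H}$ in operator norm by $2L$, pulling out factors of $\norm{R_\zeta} \le \abs{\Im\zeta}^{-1}$ as needed. The remaining two $\mtx{H}$'s, summed over $i$ in matching pairs, are handled with the matrix variance. The inequality $\sum_i \Expect (\mtx{S}_i - \mtx{S}_i')^2 \preccurlyeq 4\sigma^2 \Id$ combined with Hölder's inequality for the trace yields a bound of the form $p^2 \sigma^2 L \,\abs{\Im\zeta}^{-3}\cdot \Expect\ntr\abs{R}^{2p-1}$, or of a similar homogeneity. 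Terms where the increments cannot be paired, including those from the remainder's dependence on the perturbed point, are bounded crudely by $p^3 L^3$. Integrating in $t$ then gives
\[
\abs{\Expect F(\mtx{X}) - \Expect F(\mtx{Z})} \lesssim (p^2\sigma^2 L + p^3 L^3)\,\abs{\Im\zeta}^{-4}\cdot \sup_t \norm{R_\zeta(\mtx{Y}_t)}_{2p}^{2p-1}.
\]
Finally, we pass from the difference of $2p$-th powers to the difference of norms via $a^{2p}-b^{2p} \ge (a-b)\,a^{2p-1}$ for $a \ge b$. A self-bounding argument handles the sup over the interpolation; if necessary, use the Lipschitz property of $\norm{\cdot}_{2p}$ in the form $\frac{\diff}{\diff t}\norm{\cdot}_{2p}^{2p} = 2p\norm{\cdot}_{2p}^{2p-1}\frac{\diff}{\diff t}\norm{\cdot}_{2p}$.

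The main obstacle is the Hölder bookkeeping in the third-derivative step. We must place the resolvent powers so that exactly $2p-1$ "$\abs{R}$" factors remain, to be absorbed by the $\set{L}_{2p}$ norm. The surplus powers are bounded by $\abs{\Im \zeta}^{-4}$ in total, which requires using the trivial bound $\norm{R}\le\abs{\Im\zeta}^{-1}$ on exactly three resolvents plus the conversion step. We must also ensure that the paired variance terms give $\sigma^2$ rather than $\sum_i \norm{\mtx{S}_i}^2$. To achieve this, we write each such term as $\ntr[\mtx{B}\,\mtx{H}_i \mtx{C}\,\mtx{H}_i]$ and apply a Cauchy–Schwarz inequality for the trace so that $\sum_i \mtx{H}_i \mtx{M}\mtx{H}_i$ appears with $\mtx{M}$ positive. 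This is where the exchangeable pair is essential, since it keeps the Gaussian and non-Gaussian second-order terms exactly matched.
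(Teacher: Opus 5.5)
Your overall architecture matches the paper's: interpolation, Gaussian IBP, an exchangeable counterpart, a third-order remainder with one increment bounded by $2L$, H{\"o}lder leaving $2p-1$ resolvent factors in $\set{L}_{2p}$, and a differential inequality for $u(t)^{1/(2p)}$. But the step that is supposed to produce $\sigma^2$ fails as you describe it. In every remainder term, the pair $(\mtx{S}_i-\mtx{S}_i')\otimes(\mtx{S}_i-\mtx{S}_i')$ multiplies resolvents evaluated at points that depend on $\mtx{S}_i$, on $\mtx{S}_i'$, and on $i$ itself. After Cauchy--Schwarz and cycling, you are left with $\sum_i\Expect\ntr[\mtx{M}_i\mtx{H}_i^2]$, where $\mtx{M}_i$ is a positive function of resolvents correlated with the increment $\mtx{H}_i$. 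So the deterministic bound $\sum_i\Expect(\mtx{S}_i-\mtx{S}_i')^2\preccurlyeq 4\sigma^2\Id$ cannot be inserted.

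The paper proceeds in a specific order. It uses the random-index triple, the consolidation inequality (\cref{prop:mtx-solid-exch}), and exchangeability to move every resolvent to the common point $\mtx{Y}$. It then conditions on $\mtx{X}$, so that the sum over $i$ sits inside one trace against $\abs{R_\zeta(\mtx{Y})}^{2p+3}$. Only then does it apply H{\"o}lder. What comes out is an $\set{L}_q$ norm of the \emph{random} matrix variance $\sum_i(\mtx{S}_i-\mtx{S}_i')^2$, i.e., the statistic $M_{2,p}(\mtx{X})$. Replacing it by $\sigma^2$ requires a genuine moment inequality: the matrix Rosenthal inequality (\cref{fact:rosenthal-mtx}) gives $M_{2,p}\lesssim\sigma^2+pL^2$. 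That is the source of the term $p^3L^3=p^2\cdot L\cdot pL^2$.

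Your account of that term is not viable. Every remainder term contains the pair, so there are no unpaired terms. Bounding the increments crudely either yields quantities like $\sum_i\Expect\norm{\mtx{S}_i-\mtx{S}_i'}^3$, which are not bounded by $\sigma^2L$ in general, or forfeits the factor $u(t)^{1-1/(2p)}$ that closes the differential inequality.

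Two further steps need repair. First, your decoupling of the second-order term is vacuous. The swap $\mtx{S}_i\leftrightarrow\mtx{S}_i'$ exchanges $\mtx{Y}$ and $\mtx{Y}^{(i)}$, so $\Expect\,\Diff^2F(\mtx{Y}^{(i)})[(\mtx{S}_i-\mtx{S}_i')^{\otimes 2}]=\Expect\,\Diff^2F(\mtx{Y})[(\mtx{S}_i-\mtx{S}_i')^{\otimes 2}]$, and the discrepancy you propose to estimate is exactly zero. The real obstruction is that $\mtx{Y}$ depends on $\mtx{S}_i$, so this expectation does not factor as $\langle\Expect\,\Diff^2F(\mtx{Y}),\,2\Varo[\mtx{S}_i]\rangle$. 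You need a point with the law of $\mtx{Y}$ that is independent of $(\mtx{S}_i,\mtx{S}_i')$, namely $\mtx{Y}$ with $\mtx{S}_i$ resampled by a third copy $\mtx{S}_i''$. This is the role of $\mtx{X}''$, and of adding and subtracting $\Delta f(\mtx{Y}'',\mtx{Y}'')$, in the proof of \cref{thm:discrete-ibp-mtx}.

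Second, an integral-form Taylor remainder evaluates the resolvents at intermediate points $\mtx{Y}^{(i)}+s(\mtx{Y}-\mtx{Y}^{(i)})$. These are neither equal in law to $\mtx{Y}_t$ nor exchangeable with it, and $a\mapsto\abs{R_\zeta(a)}^{2p}$ is not convex. So you cannot push them to the endpoints and recover a copy of $u(t)$. The paper avoids this by working with exact matrix divided differences of the rational function $f=\Diff h$. The quantity $\Delta^2 f(\mtx{Y},\mtx{Y}',\mtx{Y}'')$ contains resolvents only at the three exchangeable points, which is what makes the collapse to $\abs{R_\zeta(\mtx{Y})}^{2p+3}$ possible.
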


\noindent
\Cref{thm:resolvent-norm} is a variant of \cite[Thm.~6.8]{BvH24:Universality-Sharp}.
The proof appears in \cref{sec:resolvent-norm}.

\Cref{thm:resolvent-norm} implies that the spectral support of
the independent sum is consistent with the spectral support of
the Gaussian proxy with respect to the Hausdorff distance,
$\dist_{\rm H}$.  This statement follows from a lengthy
concentration argument appearing in~\cite[Sec.~7]{BvH24:Universality-Sharp}

\begin{corollary}[Spectrum: Universality]
Instate the assumptions of \cref{thm:resolvent-norm}.
Then
\[
\Expect \dist_{\rm H}(\spec(\mtx{X}), \spec(\mtx{Z}))
	\lesssim \big( \sigma^2(\mtx{X}) L(\mtx{X}) \log d \big)^{1/3}
	+ L(\mtx{X}) \log d + \big(\sigma_*^2(\mtx{X}) \log d \big)^{1/2}. 
\]
The \term{weak variance} is defined as
\[
\sigma_*^2(\mtx{X}) \coloneqq \sup\nolimits_{\norm{\vct{u}}=1} \Var[ \vct{u}^* \mtx{X} \vct{u} ]
	= \sup\nolimits_{\norm{\vct{u}}=1} \sum_{i=1}^n \Var[ \vct{u}^* \mtx{S}_i \vct{u} ].
\]
\end{corollary}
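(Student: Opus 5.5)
The statement at the end is the Spectrum Universality corollary. The paper itself says it follows from Theorem~\ref{thm:resolvent-norm} combined with a concentration argument as in \cite[Sec.~7]{BvH24:Universality-Sharp}. My plan follows that template.

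\textbf{Step 1: resolvent norms locate the spectrum.} For a self-adjoint $\mtx{A}$ and $\zeta = x + \iunit\eta$,
\[
\norm{R_\zeta(\mtx{A})} = \dist(\zeta,\spec(\mtx{A}))^{-1},
\]
so $\norm{R_\zeta(\mtx{A})}$ is at least $(\dist(x,\spec \mtx{A})^2+\eta^2)^{-1/2}$. For a random matrix $\mtx{Y}$ and $2p \approx \log d$, normalization of the trace costs only a constant:
\[
\norm{R_\zeta(\mtx{Y})}_{2p} \ge d^{-1/(2p)} \bigl(\Expect \norm{R_\zeta(\mtx{Y})}^{2p}\bigr)^{1/(2p)}.
\]
To show that every point of $\spec(\mtx{X})$ is near $\spec(\mtx{Z})$, fix $x$ at distance $\ge \varepsilon$ from $\spec(\mtx{Z})$. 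Theorem~\ref{thm:resolvent-norm} then bounds $\norm{R_\zeta(\mtx{X})}_{2p}$ by $\norm{R_\zeta(\mtx{Z})}_{2p}$ plus the error
\[
\bigl(p^2\sigma^2 L + p^3L^3\bigr)/\eta^4 .
\]
Choosing $\eta \approx \varepsilon$ and $\varepsilon^3 \gtrsim \sigma^2 L \log d + L^3(\log d)^2$ makes this error at most a small multiple of $1/\eta$. Hence $\Expect\norm{R_\zeta(\mtx{X})}^{2p}$ is not much larger than $\eta^{-2p}$, and Markov's inequality excludes eigenvalues of $\mtx{X}$ near $x$ with high probability. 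The resulting scales are $(\sigma^2L\log d)^{1/3}$ and $L(\log d)^{2/3} \le L\log d$. The symmetric inclusion, that $\spec(\mtx{Z})$ lies near $\spec(\mtx{X})$, uses the same comparison in the other direction.

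\textbf{Step 2: concentration.} Step~1 controls moments of the resolvent of $\mtx{Z}$ rather than its realized spectrum. I would therefore first compare the random set $\spec(\mtx{Z})$ with a deterministic set, for instance the support of a suitable quantile of the spectrum, or simply the set where $\Expect\norm{R_\zeta(\mtx{Z})}$ is large. Gaussian concentration does this: the eigenvalues are $1$-Lipschitz functions of $\mtx{Z}$ in operator norm, and the Lipschitz constant of $\mtx{Z}$ as a function of its standard Gaussian coordinates is $\sigma_*$. Each extreme eigenvalue of a spectral window therefore fluctuates by $O(\sigma_*\sqrt{\log d})$, which accounts for the last term. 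For $\mtx{X}$ I would use a bounded-differences or Talagrand-type inequality. Its fluctuations are $\sigma_*\sqrt{\log d} + L\log d$, which stays within the stated bound.

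\textbf{Step 3: a net over $x$.} The estimates must hold uniformly in $x$, so I would discretize $x$ over a grid of mesh $\varepsilon$ on $[-C,C]$, with $C \approx \norm{\Expect\mtx{X}} + \sqrt{\sigma^2\log d} + L\log d$ taken from the matrix Bernstein inequality~\eqref{eqn:mtx-bernstein}. A union bound costs only logarithmic factors, which are absorbed by the choice of $p$.

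\textbf{Main obstacle.} The hardest step is the interior gaps, that is, showing that no eigenvalue of $\mtx{X}$ lies in a gap of $\spec(\mtx{Z})$. There, $\Expect\norm{R_\zeta(\mtx{Z})}^{2p}$ must be bounded using the concentration of $\mtx{Z}$'s spectrum and not merely its expectation. This is exactly the lengthy part of \cite[Sec.~7]{BvH24:Universality-Sharp}, and I would import that argument rather than redo it.
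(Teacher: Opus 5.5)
Your outline is the route the paper points to: \cref{thm:resolvent-norm} combined with the concentration argument of \cite[Sec.~7]{BvH24:Universality-Sharp}. However, the one step you actually compute, the scale in Step~1, is off by a factor $(\log d)^{1/3}$, and that step is exactly where the displayed first term comes from. Write $\sigma^2 = \sigma^2(\mtx{X})$ and $L = L(\mtx{X})$. Since $\Expect\norm{R_\zeta(\mtx{X})}^{2p} \le d\,\norm{R_\zeta(\mtx{X})}_{2p}^{2p}$, the Markov step needs $p \asymp \log d$. Making the error of \cref{thm:resolvent-norm} at most $c/\eta$ then means
\[
48p^2\sigma^2 L + 72p^3L^3 \le c\,\eta^3
\quad\Longleftrightarrow\quad
\eta^3 \gtrsim \sigma^2 L(\log d)^2 + L^3(\log d)^3 ,
\]
not $\sigma^2 L\log d + L^3(\log d)^2$. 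The resulting scales are therefore $(\sigma^2 L(\log d)^2)^{1/3}$ and $L\log d$. The second still fits the statement, although your $L(\log d)^{2/3}$ was also too optimistic.

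Changing $p$ does not help. Take $p = (\log d)/s$, let $a = (1+\varepsilon^2/\eta^2)^{-1/2}$, and let $c$ be the relative error. Markov then needs $\sqrt{2}(a+c) \le \econst^{-s}$. This forces $\varepsilon/\eta \gtrsim \econst^{s}$ and $\eta^3 \gtrsim \econst^{s} p^2\sigma^2 L$. Optimizing over $s$ returns $(\sigma^2 L(\log d)^2)^{1/3}$ up to a constant.

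This $(\log d)^{2/3}$ matches the $p^{2/3}$ scaling of \cref{thm:mom-intro} at $p \asymp \log d$. It also matches the cited source, whose deviation scale is $\sigma_* t^{1/2} + \sigma^{2/3}L^{1/3}t^{2/3} + Lt$, giving the same rate at $t \asymp \log d$. So the displayed $(\sigma^2 L\log d)^{1/3}$ is presumably a misprint for $(\sigma^2 L(\log d)^2)^{1/3}$. Either way, your argument does not establish the displayed term, and nothing in your proposal recovers the missing $(\log d)^{1/3}$. You should not tune the bookkeeping to land on it.

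There are two further problems. First, in Step~3 a uniform grid of mesh $\varepsilon$ on $[-C,C]$ has about $C/\varepsilon$ points. When $\norm{\Expect\mtx{X}} \gg \varepsilon$, $\log(C/\varepsilon)$ is not $O(\log d)$, so $p \asymp \log d$ does not absorb the union bound. A dependence on the mean would then enter, which the statement does not allow. A fix is to compare against a finite deterministic set, such as the medians of the $\lambda_k(\mtx{Z})$, which has at most $d+1$ gaps. Test at distance $r$ from that set with $\eta \propto r$, and double $p$ at each dyadic scale of $r$. The $p^2/\eta^4$ and $p^3/\eta^4$ structure of the error tolerates this, and the failure probabilities then sum to a bound independent of $C$.

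Second, the step you flag as the main obstacle is the routine one. Each $\lambda_k(\mtx{Z})$ is $\sigma_*$-Lipschitz in the underlying Gaussian coordinates, so Gaussian concentration controls every gap of $\spec(\mtx{Z})$, interior or not. The delicate input is the reverse inclusion. It needs concentration of $\spec(\mtx{X})$ near interior points at scale $\sigma_*\sqrt{\log d} + L\log d$. A bounded-differences inequality gives only $L\sqrt{n\log d}$ there. Talagrand-type inequalities need convexity or a supremum-of-linear-forms structure, and $\dist(x,\spec(\mtx{X}))$ has neither for interior $x$. That concentration for $\mtx{X}$ is the piece you genuinely must import from the cited argument.
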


\subsection{Universality: Proof strategies}

In the paper of Brailovskaya \& van Handel~\cite{BvH24:Universality-Sharp}
and in the present work,
the universality statements are derived from versions of Stein's method.
This section summarizes the similarities and differences between these
two approaches.
To simplify some formulas, we assume that each of the
summands is centered: $\Expect[\mtx{S}_i]=\mtx{0}$.

\subsubsection{Interpolation}

The first step follows a standard procedure in high-dimensional probability.
To compare the independent sum~\eqref{eqn:indep-sum-intro}
and the matching Gaussian model~\eqref{eqn:gauss-intro},
we interpolate: %
\[
\mtx{Y}_t \coloneqq %
	\sqrt{t} \, \mtx{X} %
	+ \sqrt{1-t} \, \mtx{Z} %
	\quad\text{for $t \in [0,1]$.}
\]
The expectation and the variance tensor remain constant along
the interpolation path. %

Each of the main results in this paper compares the expected
value of a trace function at each of the two random matrix models.
To perform this comparison for a smooth function $h : \R \to \C$, %
we introduce the interpolants
\[
u(t) \coloneqq \Expect \ntr h (\mtx{Y}_t)
\quad\text{for $t = [0,1]$.}
\]
Setting $f \coloneqq \Diff h$, the derivative $\dot{u}(t)$ along
the interpolation path takes the form
\begin{equation} \label{eqn:udot-intro}
\dot{u}(t) = \frac{1}{2\sqrt{t}} \Expect \ntr[ \mtx{X} f(\mtx{Y}_t) ]
	- \frac{1}{2\sqrt{1-t}} \Expect \ntr[ \mtx{Z} f(\mtx{Y}_t) ].
\end{equation}
The challenge is to compare the two covariance-like expressions that
appear in~\eqref{eqn:udot-intro}.
The goal of the analysis is to reach
a differential inequality of the form
\begin{equation} \label{eqn:diff-ineq-intro}
\abs{ \dot{u}(t) } \leq C \cdot \abs{ u(t) }^{1 - \theta}
\quad\text{for all $t \in (0,1)$.} %
\end{equation}
Given~\eqref{eqn:diff-ineq-intro}, we quickly obtain a comparison
between $u(1) = \Expect\ntr h(\mtx{X})$ and $u(0) = \Expect\ntr h(\mtx{Z})$
by solving the differential inequality.

\subsubsection{Cumulant expansions}

To study the derivative $\dot{u}(t)$ along the interpolation path,
Brailovskaya \& van Handel~\cite{BvH24:Universality-Sharp}
employ the method of cumulant expansions, attributed to
Barbour~\cite{Bar86:Asymptotic-Expansions} and Lytova \& Pastur~\cite{LP09:Central-Limit}.
The key result~\cite[Thm.~4.3]{BvH24:Universality-Sharp} is a beautiful formula:
\[
\dot{u}(t) = \frac{1}{2} \sum_{p=3}^\infty \sum_{i=1}^n \sum_{\substack{j_1,\dots,j_p \\ k_1,\dots,k_p}}
	\frac{t^{p/2-1}}{(p-1)!} \cdot \kappa_p( S_{ij_1k_1}, \cdots, S_{ij_pk_p})
	\cdot \Expect \left[ \frac{\partial^p \ntr h (\mtx{Y}_t)}{\partial S_{ij_1k_1} \cdots \partial S_{ij_pk_p}} \right].
\]
In this expression, $\kappa_p$ denotes the $p$th-order multilinear cumulant,
and $S_{ijk}$ refers to the $(j, k)$ entry of the summand $\mtx{S}_i$.
The formula holds when $h : \R \to \C$ is a polynomial function.

Let us pause for a moment to elaborate. %
Since the independent sum $\mtx{X}$ and the Gaussian proxy $\mtx{Z}$ have matching
first- and second-order moments, the first two cumulants of entries $\mtx{X}$
equal the first two cumulants of entries of $\mtx{Z}$.
Thus, the derivative along the interpolation path only depends on
the cumulants of order three and higher.
Since the cumulant vanishes when two of its arguments are statistically independent,
we only need to treat cumulants involving the entries of individual summands.
The remarkable feature of this approach is
that it separates the statistical properties of the summands
from the smoothness properties of the trace function.

To take advantage of this description of the derivative $\dot{u}(t)$,
Brailovskaya \& van Handel require several devices.
First, to treat functions that are not polynomials, such as resolvents,
they truncate the series and bound the error terms (their Theorem 4.4).
Second, to revert from cumulants to moments, they
employ M{\"o}bius inversion (their Corollary 6.1).
Third, to evaluate the expectation, %
they must compute high-order derivatives of the matrix function
(e.g., their Lemma 6.3).
Fourth, to reach the differential inequality~\eqref{eqn:diff-ineq-intro}, they
use a sophisticated trace inequality (their Proposition 5.1).

There is a sense in which their approach is mechanical,
which makes it potentially applicable to a range of problems.
On the other hand, this set of arguments is both technical and involved.
This complexity make it challenging to appreciate the reason
that the universality results hold true and to extend the strategy
to other situations.

\subsubsection{Exchangeable counterparts}

This paper develops a more elementary approach that charts
a route around some of the difficulties inherent in the
cumulant expansion.  the argument is based on the method
of exchangeable counterparts, which originates in the work of
Charles Stein~\cite{Ste86:Approximate-Computation} on
normal approximation.  The application to random matrices
was introduced in the paper~\cite{MJCFT14:Matrix-Concentration}.

Our strategy uses exchangeable counterparts of the
independent sum~\eqref{eqn:indep-sum-intro} to capture
its fluctuations.  For example, suppose that we construct
an exchangeable counterpart $\mtx{X}'$ to the sum $\mtx{X}$
by replacing a randomly chosen summand with an independent copy.
For any function $\mtx{F} : \Sym_d \to \M_d$,
we have the covariance identity
\[
\Expect[ \mtx{X} \cdot \mtx{F}(\mtx{X}) ]
	= \frac{n}{2} \Expect[ (\mtx{X} - \mtx{X}') (\mtx{F}(\mtx{X}) - \mtx{F}(\mtx{X}')) ].
\]
Using this representation, we can obtain a closed form
representation of the derivative~\eqref{eqn:udot-intro}
using only \hilite{third-order} differences of the function $\mtx{F}$.
This process applies to any function.
It allows us to avoid taking high-order derivatives
and invoking elaborate trace inequalities.
On the other hand, it is more delicate to work with differences
(as compared with derivatives), and the argument may seem
less automatic than the cumulant expansion.
We also require an extra technical input, namely the matrix
Rosenthal inequality (\cref{fact:rosenthal-mtx}),
although that result is fairly easy to
prove~\cite[Thm.~7.1]{MJCFT14:Matrix-Concentration}.

Our strategy has some features in common with Stein's method
for multivariate normal approximation;
for example, see~\cite{Mec09:Steins-Method,Rol13:Steins-Method}.
Nevertheless, the details of our approach are novel, even
in the scalar setting.
Our implementation allows us to reach the differential
inequality~\eqref{eqn:diff-ineq-intro}, even for
nonconvex functions, such as resolvent powers.

\subsection{Other related work}

The technique of comparing a random matrix with a Gaussian proxy
is a standard move in random matrix theory that long predates
the paper~\cite{BvH24:Universality-Sharp}.  For example,
see the tutorial paper~\cite{Tao19:Least-Singular}.

Very recently, the author has shown that it is possible to
\hilite{compare} the extreme eigenvalues of an
independent sum of random matrices with the extreme eigenvalues
of a Gaussian proxy~\cite{Tro25:Comparison-Theorems,Tro26:Comparison-Theorems-max}.
These techniques only lead to one-sided bounds,
rather than a two-sided equivalence in the spirit of \cref{thm:mom-intro}.
The benefit is that the hypotheses of the comparison
theorems are weaker, and they can produce stronger guarantees
than the universality results presented here.

\subsection{Roadmap}

\Cref{sec:cov-scalar,sec:univ-scalar} develop our
universality arguments in the scalar setting,
where the strategy may be more transparent.
\Cref{sec:mtx-calculus} introduces elements
of the difference calculus for matrix functions,
and \cref{sec:mtx-solid} presents some new trace
inequalities that play a role in the proof.
\Cref{sec:cov-matrix,sec:interp-matrix} generalize
the scalar universality tools to the matrix setting.
Afterward, we develop proofs of the three main
results in \cref{sec:cauchy-xform,sec:mono-mom,sec:resolvent-norm}.
\Cref{app:scalar-diff} summarizes facts about scalar divided
differences, and \cref{app:rosenthal} documents
Rosenthal inequalities for scalars and matrices.

\subsection{Notation}

The results for random matrices are valid in both
the real field $\R$ or the complex field $\C$.
The star ${}^*$ returns the complex conjugate of a complex number
or the (conjugate) transpose of a matrix.
The symbols $\vee$ and $\wedge$ denote the
infix maximum and minimum.
Nonlinear functions bind before the trace and
the expectation, and we may omit brackets when
they are unnecessary.

The upright $\Diff$ always refers to the derivative
operator, while $\Delta$ refers to the divided difference
operator.  For a function $u(t)$ of a real variable $t$, we
also use the dot notation $\dot{u}(t)$ for the derivative.
Primes always refer to exchangeable counterparts---never derivatives.

The linear space $\M_d$ contains all $d \times d$ matrices
over the scalar field.
For a matrix $\mtx{A} \in \M_d$, we write
$\ntr[\mtx{A}] \coloneqq d^{-1} \sum_{i=1}^d a_{ii}$
for the \term{normalized trace}.
The unadorned $\norm{\cdot}$ always refers
to the \term{spectral norm}.
Throughout the paper, we implicitly use the
normalized Schatten norm $\set{S}_q$ for $q \geq 1$, defined
by $\norm{\mtx{A}}_q \coloneqq (\ntr \abs{\mtx{A}}^q)^{1/q}$. %
As noted, $\abs{\mtx{A}}$ returns the positive part of
the polar decomposition. %
The spectrum, $\spec(\mtx{A})$, is the set of all complex
eigenvalues of the matrix.

Recall that a square matrix $\mtx{A}$ is
\term{self-adjoint} when $\mtx{A} = \mtx{A}^*$.
The real-linear subspace $\Sym_d$ collects the
self-adjoint matrices in $\M_d$.
For a function $f : \R \to \R$ and a self-adjoint matrix $\mtx{A}$,
the \term{standard matrix function} $f(\mtx{A})$
is computed by applying $f$ to each eigenvalue
of $\mtx{A}$ without changing the associated eigenspace.

The operator $\Probe(\cdot)$ returns the probability of an
event, and $\Expect[\cdot]$ computes the expectation of a
random variable that takes values in a linear space.
The functions $\Var$ and $\Cov$ compute the variance and
covariance of real random variables.
For $q \geq 1$, the symbol $\norm{\cdot}_q$ refers to the $\set{L}_q$ norm
of a real random variable or the $\set{L}_p(\set{S}_p)$
norm of a matrix-valued random variable,
as in~\eqref{eqn:Lq-norm}.

We occasionally use the order symbol $\lesssim$, which
suppresses a universal constant.

\section{Covariance identities: Scalar setting}
\label{sec:cov-scalar}

We begin with a universality theory
for independent sums of \hilite{real} random variables.
In this setting, the arguments are simpler,
so the ideas shine through more brightly.
The universality results for random matrices
follow the same pattern of argument, but we
must address several additional complications.

Our approach is based on covariance
identities that can be regarded as (discrete)
integration by parts (IBP) rules.
We establish these results using the method of exchangeable counterparts,
a core tool in Stein's method~\cite{Ste86:Approximate-Computation}.
Nevertheless, the specific techniques in this paper
appear to be novel. %
For more background on Stein's method, see the
monographs~\cite{CGS11:Normal-Approximation,Ros11:Fundamentals-Steins}.

All of our results hold under weak regularity conditions.
We will explicitly frame assumptions about smoothness of functions.
On the other hand, we tacitly assume that all expectations
are defined and finite, without spelling out the requirement each time.

\subsection{Gaussian IBP}

The Gaussian IBP rule allows us to compute the covariance between a
Gaussian random variable and a function of the same Gaussian random variable.
This result provides a template for covariance identities for other
types of random variables.

\begin{fact}[Gaussian IBP] \label{fact:gauss-ibp}
Let $Z$ be a \hilite{real} Gaussian random variable.
For each continuously differentiable function $f : \R \to \R$,
\[
\Cov(Z, f(Z)) = \Var[Z] \cdot \Expect[ \Diff f(Z) ].
\]
The symbol $\Diff$ denotes the derivative operator.
\end{fact}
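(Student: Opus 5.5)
We reduce to the standard Gaussian and then integrate by parts. Write $Z = \mu + s G$, where $\mu = \Expect[Z]$, $s = \sqrt{\Var[Z]} \geq 0$, and $G$ is a standard normal variable with density $\phi(x) = (2\pi)^{-1/2}\econst^{-x^2/2}$. If $s = 0$, then $Z$ is constant and both sides vanish, so assume $s > 0$. Define $g(x) \coloneqq f(\mu + s x)$. Then $g$ is continuously differentiable with $\Diff g(x) = s\,(\Diff f)(\mu + s x)$, and
\[
\Cov(Z, f(Z)) = \Expect[(Z-\mu) f(Z)] = s\, \Expect[ G\, g(G)].
\]
Hence it suffices to prove the standard identity $\Expect[G\,g(G)] = \Expect[\Diff g(G)]$. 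Given this,
\[
\Cov(Z, f(Z)) = s\,\Expect[\Diff g(G)] = s^2\, \Expect[\Diff f(Z)] = \Var[Z]\cdot \Expect[\Diff f(Z)].
\]

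For the standard case, the key observation is $\Diff \phi(x) = -x\,\phi(x)$. On a compact interval $[-M, M]$, integration by parts gives
\[
\int_{-M}^{M} x\, g(x)\, \phi(x) \idiff{x}
	= \big[-g(x)\phi(x)\big]_{-M}^{M} + \int_{-M}^{M} \Diff g(x)\, \phi(x) \idiff{x}.
\]
We then let $M \to \infty$. Following the paper's standing convention, we assume that the expectations $\Expect\abs{G\,g(G)}$ and $\Expect\abs{\Diff g(G)}$ are finite. Dominated convergence then handles both integrals.

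The only delicate step is showing that the boundary term $g(\pm M)\phi(\pm M)$ vanishes along some sequence $M \to \infty$. To avoid this, we use the Fubini route. Since $\phi(x) = \int_x^\infty y\,\phi(y) \idiff{y}$ for $x \geq 0$, and symmetrically $\phi(x) = -\int_{-\infty}^x y\,\phi(y)\idiff{y}$ for $x \le 0$, we can write
\[
\Expect[\Diff g(G)]
	= \int_0^\infty \Diff g(x) \int_x^\infty y\phi(y)\idiff{y}\idiff{x}
	- \int_{-\infty}^0 \Diff g(x)\int_{-\infty}^x y\phi(y)\idiff{y}\idiff{x}.
\]
Absolute integrability of $\Diff g\cdot\phi$ justifies swapping the order of integration by Tonelli and Fubini. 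After the swap, the inner integrals become $\int_0^y \Diff g = g(y) - g(0)$ and $\int_y^0 \Diff g = g(0) - g(y)$. This yields
\[
\Expect[\Diff g(G)] = \Expect[G\,(g(G) - g(0))] = \Expect[G\,g(G)],
\]
where the last step uses $\Expect[G] = 0$. This argument requires no boundary estimates.
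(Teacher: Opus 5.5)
Your proof is correct and follows the same route as the paper, which writes the covariance as an integral against the Gaussian density and integrates by parts (deferring details to Nourdin--Peccati); your affine reduction to the standard normal, followed by the Fubini form of integration by parts based on $\phi(x) = \int_x^\infty y\,\phi(y)\idiff{y}$, is the standard rigorous version of that step. One small point: your Tonelli computation already shows that $G\,(g(G)-g(0))$ is integrable, so only $\Expect\abs{\Diff g(G)} < \infty$ is needed, and you can drop the preliminary truncation to $[-M,M]$.
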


\begin{proof}[Proof sketch]
Express the covariance as an integral with respect
to the Gaussian distribution, and apply
integration by parts.  See~\cite[Lem.~1.1.1]{NP12:Normal-Approximations}
for details.
\end{proof}

\subsection{Exchangeability}

We plan to develop an analog of the Gaussian IBP rule that can shed
light on the distribution of an independent sum.  We begin with a
fundamental definition.

\begin{definition}[Exchangeable family]
A family $(X_1, \dots, X_k)$ of random variables is \term{exchangeable}
if the distribution $(X_{\pi(1)}, \dots, X_{\pi(k)})$ is the same
for every permutation $\pi$.  Equivalently, for any integrable function $f$,
\[
\Expect[ f(X_1, \dots, X_k) ] = \Expect[ f(X_{\pi(1)}, \dots, X_{\pi(k)}) ]
\quad\text{for each permutation $\pi$.}
\]
In particular, each of the random variables $X_i$ has the same marginal distribution.  
We use the prime symbol $(X, X', X'')$ to denote exchangeable counterparts
of a random variable.  It is sometimes helpful to abbreviate a function
with permuted coordinates as $f^{\pi} \coloneqq f \circ \pi$.
\end{definition}

For every random variable $X$, we can construct two fundamental exchangeable pairs:
\begin{itemize}
\item	\textbf{Trivial pair.}  The pair $(X,X)$ is exchangeable.
\item	\textbf{Independent pair.} If $X'$ is an \hilite{independent} copy of $X$,
then $(X, X')$ is exchangeable.
\end{itemize}
These are the extreme examples.  It is more useful to construct exchangeable
counterparts that are induced by a small modification to the random variable.

\subsection{Key example: Independent sums}
\label{sec:indep-sum-exch}

Consider an \hilite{independent} family $(S_1, \dots, S_n)$ of
real random variables.  Introduce the sum
\begin{equation} \label{eqn:indep-sum}
X \coloneqq \sum_{i=1}^n S_i.
\end{equation}
To construct an exchangeable counterpart for $X$, draw an independent copy
$(S_1', \dots, S_n')$ of the sequence $(S_1, \dots, S_n)$ of summands.  Next,
draw a random variable $I \sim \uniform\{1, \dots, n\}$, independent from
everything else.  Remove the $I$th summand from the sum and replace it
by a fresh copy:
\begin{equation} \label{eqn:indep-sum-exch}
X' \coloneqq X + (S_I' - S_I) = S_I' + \sum_{j \neq I} S_j.
\end{equation}
It is not hard to see that the pair $(X, X')$ is exchangeable,
although the two constituents are typically not independent. %

For jointly Gaussian random variables, conditional expectation amounts
to linear regression.   A related property holds for the exchangeable pair %
we just constructed.

\begin{proposition}[Independent sum: Linear regression property] \label{prop:indep-sum-linreg}
As in~\eqref{eqn:indep-sum}, consider an independent sum $X$ with $n$ terms,
and construct the exchangeable counterpart $X'$ as in~\eqref{eqn:indep-sum-exch}. %
Then
\begin{equation} \label{eqn:indep-sum-linreg}
\Expect[ X - X' \condbar X ] = n^{-1} (X - \Expect X).
\end{equation}
\end{proposition}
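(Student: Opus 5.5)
We compute the conditional expectation by conditioning on the random index $I$ and using the independence structure of the construction~\eqref{eqn:indep-sum-exch}. By definition, $X - X' = S_I - S_I'$. The random index $I$ is uniform on $\{1,\dots,n\}$ and independent of both $(S_1,\dots,S_n)$ and $(S_1',\dots,S_n')$. Averaging over $I$ therefore gives
\[
\Expect[ X - X' \condbar X ] = \frac{1}{n} \sum_{i=1}^n \Expect[ S_i - S_i' \condbar X ].
\]
Strictly, we should condition on the full family $(S_1,\dots,S_n)$ first and then apply the tower property to pass down to the $\sigma$-algebra generated by $X$. This is legitimate because $I$ and the copies $S_i'$ are independent of the original summands.

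Next we handle the two pieces separately. Since $S_i'$ is independent of $(S_1,\dots,S_n)$, and hence of $X$, we have $\Expect[S_i' \condbar X] = \Expect[S_i'] = \Expect[S_i]$. For the other piece, linearity of conditional expectation gives
\[
\sum_{i=1}^n \Expect[S_i \condbar X] = \Expect\Big[ \sum_{i=1}^n S_i \condbar X \Big] = \Expect[X \condbar X] = X.
\]
Combining these two facts,
\[
\Expect[X - X' \condbar X] = \frac{1}{n}\Big( X - \sum_{i=1}^n \Expect S_i \Big) = n^{-1}(X - \Expect X),
\]
which is~\eqref{eqn:indep-sum-linreg}.

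There is no substantial obstacle here. The one point that needs care is bookkeeping: when we average over $I$ while conditioning on $X$, the independence of $I$ from $(S, S')$ is what lets us replace the random index by a uniform average. It is also worth noting that we never need the individual conditional expectations $\Expect[S_i \condbar X]$; only their sum enters, and that sum is exactly $X$. We tacitly assume each $S_i$ is integrable, consistent with the paper's standing convention.
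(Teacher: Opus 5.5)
Your proof is correct and follows essentially the same route as the paper: average over the independent uniform index $I$, replace each $S_i'$ by its mean $\Expect S_i$, and recover $X$ from the sum of the $S_i$. The only difference is cosmetic. The paper first conditions on the full family $(S_1,\dots,S_n)$, obtains $n^{-1}(X-\Expect X)$ exactly, and then applies the tower property. You condition on $X$ directly and use linearity, $\sum_i \Expect[S_i \condbar X] = X$; both steps are justified by the independence of $I$ and the copies $S_i'$ from the original summands.
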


\begin{proof}
From the definition of $(X, X')$, we determine that
\begin{align*}
\Expect[ X - X' \condbar S_1, \dots, S_n ]
	&= \Expect[ S_I - S_I' \condbar S_1, \dots, S_n ]
	= \Expect\left[ \frac{1}{n} \sum_{i=1}^n (S_i - S_i') \lcondbar S_1, \dots, S_n \right] \\
	&= \frac{1}{n} \sum_{i=1}^n (S_i - \Expect S_i) = n^{-1} (X - \Expect X).
\end{align*}
Take the conditional expectation with respect to $X$ to complete the argument.
\end{proof}

We can construct more exchangeable counterparts for~\eqref{eqn:indep-sum} by a similar process.
Let $(S_1'', \dots, S_n'')$ be another independent copy of the summands.
Define
\[
X'' \coloneqq X + (S_I'' - S_I) = S_I'' + \sum_{j \neq I} S_j.
\]
To be clear, we use the \hilite{same} random index $I$ to generate both $X'$ and $X''$.
The resulting triple $(X, X', X'')$ is exchangeable,
even if we condition on the value of $I$.

\subsection{Discrete covariance identities}

The linear regression property (\cref{prop:indep-sum-linreg}) supports
an elegant covariance identity for independent sums.

\begin{proposition}[Covariance identity: Independent sum] \label{prop:discrete-ibp-sum}
As in \cref{sec:indep-sum-exch}, let $X$ be a real independent sum~\eqref{eqn:indep-sum}
with $n$ terms, and let $X'$ be the exchangeable counterpart~\eqref{eqn:indep-sum-exch}.
For each function $f : \R \to \R$,
\begin{equation} \label{eqn:discrete-ibp-sum}
\Cov( X, f(X) ) = \frac{n}{2} \Expect[ (X - X')( f(X) - f(X') ) ].
\end{equation}
\end{proposition}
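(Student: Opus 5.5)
The plan is the standard exchangeable-pair argument: combine the linear regression property (\cref{prop:indep-sum-linreg}) with the exchangeability of $(X, X')$. First I rewrite the covariance as $\Cov(X, f(X)) = \Expect[(X - \Expect X) f(X)]$. Then I substitute the regression identity $X - \Expect X = n \, \Expect[X - X' \condbar X]$ and use the tower property, since $f(X)$ is $X$-measurable:
\[
\Cov(X, f(X)) = n \, \Expect\big[ \Expect[X - X' \condbar X] \, f(X) \big] = n \, \Expect[ (X - X') f(X) ].
\]
This step needs $(X - X') f(X)$ to be integrable, which the standing convention that all expectations are finite covers.

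Next comes the symmetrization step. Since $(X, X')$ is exchangeable, swapping the two coordinates leaves the joint law unchanged. Applied to the integrable function $(x, x') \mapsto (x - x') f(x)$, this gives
\[
\Expect[(X - X') f(X)] = \Expect[(X' - X) f(X')] = -\Expect[(X - X') f(X')].
\]
I then average the two expressions for $\Expect[(X - X') f(X)]$. This yields $\Expect[(X - X') f(X)] = \tfrac{1}{2} \Expect[(X - X')(f(X) - f(X'))]$, and multiplying by $n$ gives~\eqref{eqn:discrete-ibp-sum}.

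None of these steps is a real obstacle. The one point that needs care is confirming that the pair $(X, X')$ constructed in~\eqref{eqn:indep-sum-exch} is exchangeable, since the symmetrization depends on it. To verify this, I condition on $I = i$. The pair then has the form $(W + S_i, W + S_i')$, where $W = \sum_{j \neq i} S_j$ and $W$, $S_i$, $S_i'$ are independent, with $S_i$ and $S_i'$ identically distributed. Swapping $S_i$ and $S_i'$ therefore preserves the joint law, and averaging over $i$ finishes the check. Measurability and integrability of $f$ are tacitly assumed, following the paper's convention.
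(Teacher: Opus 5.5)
Your proposal is correct and follows the paper's proof: the linear regression property with the tower property gives $\Cov(X, f(X)) = n\,\Expect[(X - X') f(X)]$, and swapping $X \leftrightarrow X'$ and averaging gives the identity. Your extra check that $(X, X')$ is exchangeable, by conditioning on $I$, is sound; the paper only asserts that fact in \cref{sec:indep-sum-exch}.
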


\begin{proof}
Using the linear regression property~\eqref{eqn:indep-sum-linreg},
\begin{align*}
\Cov( X, f(X) ) &= \Expect[ (X - \Expect X) f(X) ] \\
	&= n \cdot \Expect[ \Expect[X-X' \condbar X] \cdot f(X) ]
	= n \cdot \Expect[ (X-X') f(X) ]. 
\end{align*}
The pair $(X, X')$ is exchangeable, so we can swap their roles:
\[
\Cov( X, f(X) ) = \Cov(X', f(X')) = n \cdot \Expect[ (X'-X) f(X') ].
\]
Average the two displays to complete the proof.
\end{proof}

In particular, \cref{prop:discrete-ibp-sum} contains a fundamental
variance identity.  Taking $f(t) \coloneqq t$, %
\begin{equation} \label{eqn:var-exch}
\Var[X] = \Cov(X, X) = \Expect[ (X - \Expect X)^2 ] = \frac{n}{2} \Expect[ (X - X')^2 ].
\end{equation}
We can determine the variance of the sum $X$ by comparing
it with its exchangeable counterpart $X'$.

\subsection{Divided differences}

Let us rewrite the discrete covariance identity (\cref{prop:discrete-ibp-sum}) as
\begin{equation} \label{eqn:indep-sum-cov-dd}
\Cov(X, f(X)) = \Expect\left[ \frac{n}{2} (X - X')^2 \cdot \frac{f(X) - f(X')}{X - X'} \right]
\end{equation}
This expression admits a strong analogy with the Gaussian IBP rule (\cref{fact:gauss-ibp}).
In view of~\eqref{eqn:var-exch}, the first term in the expectation approximates the variance of $X$, while the second term approximates the derivative $f'(X)$.
To pursue this observation, we introduce \term{divided differences}.

\begin{definition}[Divided differences]
Let $f : \R \to \R$ be a continuously differentiable function.  The \term{divided difference operator}
$\Delta$ acts on $f$ to produce a \hilite{continuous} bivariate function:
\[
\Delta f(a_0, a_1) \coloneqq \frac{f(a_0) - f(a_1)}{a_0 - a_1}
\quad\text{for distinct $a_0, a_1 \in \R$.}
\]
By continuity, $\Delta f(a, a) = \Diff f(a)$ for $a \in \R$.

Higher-order divided differences are defined by iteration.  Assume that
$f : \R \to \R$ is $k$-times continuously differentiable for a natural number $k \geq 2$.
For real points $a_0, \dots, a_k \in \R$,
\[
\Delta^k f(a_0, \dots, a_k) \coloneqq
	\frac{\Delta^{k-1} f(a_0, \dots, a_{k-1}) - \Delta^{k-1} f(a_1, \dots, a_k)}{a_0 - a_k}
	\quad\text{for distinct $a_0, a_k \in \R$.}
\]
The confluent case $a_0 = a_k$ is determined by the requirement that $\Delta^k f$
is continuous.
Divided differences and derivatives are linked by the formula
$\Delta^k f(a, \dots, a) = \Diff^k f(a) / k!$ for all $a \in \R$.
\end{definition}

Here is a simple example that has significance for us.

\begin{example}[Power function: Divided differences]
Fix a natural number $p \in \N$, and consider the power function $f(a) \coloneqq a^p$
for $a \in \R$.  For all $a, b \in \R$, we can confirm the algebraic identity
\[
f(a) - f(b) = a^p - b^p = \sum_{q=0}^{p-1} a^q (a-b) b^{p-1-q}.
\]
Indeed, the sum telescopes.  It follows that the first divided difference satisfies
\[
\Delta f(a, b) = \sum_{q + r = p-1} a^q b^r
\quad\text{where $q, r \in \Z_+$.}
\]
By a parallel argument, for all $a, b, c \in \R$, the second divided difference takes the form
\[
\Delta^2 f(a, b, c) = \sum_{q+r+s = p-2} a^q b^r c^s
\quad\text{where $q,r,s \in \Z_+$.}
\]
Higher-order divided differences follow a similar pattern.
\end{example}

The universality arguments require some additional facts about scalar divided differences,
collected in \cref{app:scalar-diff}.  We give detailed cross-references
as needed.  See~\cite{deB05:Divided-Differences} or~\cite[App.~B.16]{Hig08:Functions-Matrices}
for further information.

\subsection{Discrete IBP}

With this preparation, we can develop a discrete IBP rule for the
independent sum that parallels the Gaussian IBP rule.

\begin{theorem}[Discrete IBP: Independent sum] \label{thm:discrete-ibp-sum}
As in \cref{sec:indep-sum-exch}, consider the exchangeable triple $(X,X',X'')$
induced by a real independent sum with $n$ terms.
For each function $f : \R \to \R$ with two continuous derivatives,
\begin{align*}
\Cov(X, f(X)) &= \Var[X] \cdot \Expect[ \Diff f(X) ] \\
	&+ \frac{1}{2} \Expect\big[ W(X,X',X'') \cdot \big( \Delta^2 f (X,X',X'') + \Delta^2 f(X,X,X') \big) \big]
\end{align*}
where the random variable
$%
W(X,X',X'') \coloneqq n \cdot (X' - X'')^2 (X' - X).
$%
\end{theorem}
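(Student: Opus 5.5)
The plan is to start from the exchangeable-pair covariance identity (\cref{prop:discrete-ibp-sum}), rewrite it with divided differences, and then match the leading term against $\Var[X]\cdot\Expect[\Diff f(X)]$ using the third counterpart $X''$. The remainder will be expressed through two second-order divided differences, each obtained by changing one argument of $\Delta f$. Throughout, I use that the triple $(X,X',X'')$ is exchangeable even conditional on $I$. I also use that $X$ and $I$ are independent, since $X$ does not depend on $I$.

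\emph{Step 1: covariance identity.} By \cref{prop:discrete-ibp-sum} and \eqref{eqn:indep-sum-cov-dd},
\[
\Cov(X,f(X)) = \tfrac{n}{2}\Expect\big[(X-X')^2\,\Delta f(X,X')\big].
\]
Apply the permutation $X\mapsto X''$, $X'' \mapsto X$, which is legitimate by exchangeability of the triple. This gives
\[
\Cov(X,f(X)) = \tfrac{n}{2}\Expect\big[(X''-X')^2\,\Delta f(X'',X')\big].
\]

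\emph{Step 2: the Gaussian-like term.} I will show that
\[
\tfrac{n}{2}\Expect\big[(X'-X'')^2\,\Diff f(X)\big]=\Var[X]\cdot\Expect[\Diff f(X)].
\]
Conditional on $I$, we have $X'-X''=S_I'-S_I''$. Here $S_I'$ and $S_I''$ are independent copies of $S_I$ and are independent of $X$. Hence
\[
\Expect\big[(X'-X'')^2 \condbar X, I\big] = 2\Var[S_I].
\]
Since $I$ is independent of $X$, the left-hand side equals $n\,\Expect_I[\Var S_I]\cdot\Expect[\Diff f(X)]$. Moreover, $n\,\Expect_I[\Var S_I]=\sum_i \Var[S_i]=\Var[X]$. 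Subtracting, it remains to identify
\[
\tfrac{n}{2}\Expect\big[(X'-X'')^2\big(\Delta f(X',X'')-\Delta f(X,X)\big)\big],
\]
using $\Delta f(X,X)=\Diff f(X)$ and the symmetry of $\Delta f$.

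\emph{Step 3: telescoping into second differences.} Split the difference as
\[
\big[\Delta f(X',X'')-\Delta f(X,X'')\big]+\big[\Delta f(X,X'')-\Delta f(X,X)\big].
\]
By the definition and symmetry of the second divided difference, the first bracket equals $(X'-X)\,\Delta^2 f(X,X',X'')$. Multiplied by $\tfrac{n}{2}(X'-X'')^2$, it produces exactly $\tfrac12 W\,\Delta^2 f(X,X',X'')$.

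For the second bracket, apply the exchangeable transposition $X'\leftrightarrow X''$, which leaves the weight $(X'-X'')^2$ invariant. This converts the bracket into $\Delta f(X,X')-\Delta f(X,X)=(X'-X)\,\Delta^2 f(X,X,X')$, again with weight $\tfrac{n}{2}(X'-X'')^2$. That is the second term $\tfrac12 W\,\Delta^2 f(X,X,X')$, which completes the identity.

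\emph{Main obstacle.} The delicate point is the bookkeeping in Step 3. The factor multiplying the second differences must come out as $X'-X$ rather than $X''-X$. Achieving this requires choosing the right intermediate term $\Delta f(X,X'')$ and then invoking the $X'\leftrightarrow X''$ symmetry for the second bracket only.

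The regularity is mild: $f\in C^2$ makes $\Delta^2 f$ continuous, so confluent arguments cause no trouble. Integrability is tacitly assumed, as elsewhere in the paper.
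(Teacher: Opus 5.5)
Your proof is correct and is essentially the paper's argument: the paper starts from the form with weight $(X-X')^2$, subtracts $\Diff f(X'')$, uses independence of $X''$ from $X-X'$, telescopes through $\Delta f(X,X'')$, and relabels only at the end via $(X,X',X'')\to(X'',X',X)$ and $(X,X',X'')\to(X',X'',X)$. You perform the relabeling $X\leftrightarrow X''$ up front and the transposition $X'\leftrightarrow X''$ on the second piece at the end, and these compose to exactly the paper's two permutations.
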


Compare this result with the Gaussian IBP rule (\cref{fact:gauss-ibp}).
For an independent sum, we suffer an error term that reflects the smoothness of the
function through $\Delta^2 f$ and the first three moments of the summands through $W$.

\begin{proof}
We begin with the formulation~\eqref{eqn:indep-sum-cov-dd} of the covariance identity
in terms of the divided difference:
\[
\Cov(X, f(X)) = \frac{n}{2} \Expect[ (X - X')^2 \cdot \Delta f(X, X') ].
\]
To the divided difference, add and subtract $\Delta f(X'', X'') = \Diff f(X'')$
to reach
\[
\Cov(X, f(X)) = \frac{n}{2} \Expect[ (X - X')^2 \cdot \Diff f(X'') ]
	+ \frac{n}{2} \Expect\big[ (X-X')^2 \big( \Delta f(X,X') - \Delta f(X'',X'') \big) \big]
\]
Let us revise this formula to make it more actionable.

For the first term, note that $X''$ is independent from $X - X' = S_I - S_I'$.
Thus,
\[
\frac{n}{2} \Expect[ (X - X')^2 \cdot \Diff f(X'') ]
	= \frac{n}{2} \Expect[ (X - X')^2 ] \cdot \Expect[ \Diff f(X'') ]
	= \Var[X] \cdot \Expect[ \Diff f(X) ].
\]
We have employed the variance identity~\eqref{eqn:var-exch},
and we have made the exchange $X'' \leftrightarrow X$.

For the second term, we reduce the differences to second order.  To do so,
add and subtract $\Delta f(X, X'')$.  This step yields
\[
\Delta f(X,X') - \Delta f(X'',X'')
	= (X'- X'') \cdot \Delta^2 f(X,X',X'') + (X - X'') \cdot \Delta^2 f(X,X'',X'').
\]
Substitute this identity into the expectation to obtain
\begin{align*}
\Expect[ (X-X')^2 \cdot \big( \Delta f(X,X') - \Delta f(X'',X'') \big)
	&= \Expect[ (X-X')^2 (X'-X'') \cdot \Delta^2 f(X,X',X'') ] \\
	&+ \Expect[ (X-X')^2 (X-X'') \cdot \Delta^2 f(X,X'',X'') ].
\end{align*}
To clean up, we can make the exchange $(X, X', X'') \rightarrow (X'', X', X)$
in the first summand and $(X, X', X'') \rightarrow (X', X'', X)$
in the second summand, and then recall that the divided difference
is a symmetric function of its arguments.
Reassemble the pieces the reach the conclusion.
\end{proof}

\section{Universality: Scalar setting}
\label{sec:univ-scalar}

In this section, we develop a framework for establishing
universality results for an independent sum
of real random variables.
As in~\cite{Mec09:Steins-Method},
the strategy is to interpolate between the sum and a matching
Gaussian model.  The new insight is to express the error terms
with divided differences.
After presenting the basic results, we explain how they lead
to a quantitative central limit theorem and a comparison for
monomial moments.

\subsection{Interpolation}

Consider a real random variable $X$ with at least two finite moments.
Introduce an \hilite{independent} Gaussian random variable
$Z \sim \normal_{\R}(\Expect[X], \Var[X])$
with the same expectation and variance.
To compare a function of $X$ with a function of $Z$,
we carefully interpolate between the random
variables and control the rate of change along the path of interpolation.

Define a family of interpolants:
\begin{equation} \label{eqn:interp}
Y_t \coloneqq Y_t(X, Z) \coloneqq \Expect[X] + \sqrt{t} \cdot (X - \Expect X) + \sqrt{1-t} \cdot (Z- \Expect Z)
\quad\text{for all $t \in [0,1]$.}
\end{equation}
Observe that $Y_1 = X$ and $Y_0 = Z$.  This formulation ensures that the expectation
and the variance both remain constant on the interpolation path:
\[
\Expect[ Y_t ] = \Expect[X]
\quad\text{and}\quad
\Var[Y_t ] = \Var[X]
\quad\text{for all $t \in [0,1]$.}
\]
We also note the form of the derivatives.  For $t \in (0,1)$,
\[
\ddx{t} Y_t(x, z) = \frac{x-\Expect X}{2\sqrt{t}} - \frac{z - \Expect Z}{2\sqrt{1-t}}
\quad\text{and}\quad
\ddx{x} Y_t(x, z) = \sqrt{t}
\quad\text{and}\quad
\ddx{z} Y_t(x, z) = \sqrt{1-t}.
\]
Our plan is to bound derivatives of functions along the path.
We begin with an easy result.

\begin{proposition}[Interpolation] \label{prop:interp}
As above, introduce real random variables $X$ and $Z$ along with the interpolants
$Y_t$ from~\eqref{eqn:interp}.  Consider a continuously differentiable
function $h : \R \to \R$ with derivative $f \coloneqq \Diff h$, and define
\[
u(t) \coloneqq \Expect h(Y_t)
\quad\text{for all $t \in [0,1]$.}
\]
For $t \in (0,1)$, the time derivative $\dot{u}(t)$ takes the form
\begin{equation} \label{eqn:interp-cov}
\dot{u}(t) = \frac{ \Cov(X, f(Y_t(X,Z)))}{2\sqrt{t}} - \frac{ \Cov(Z, f(Y_t(X,Z)))}{2\sqrt{1-t}}.
\end{equation}
\end{proposition}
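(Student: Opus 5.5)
The plan is to differentiate under the expectation and then use the independence of $X$ and $Z$ to turn expectations into covariances. For $t \in (0,1)$, the map $t \mapsto h(Y_t(x,z))$ is continuously differentiable, and the chain rule gives
\[
\ddx{t} h(Y_t(x,z)) = f(Y_t(x,z)) \cdot \left( \frac{x - \Expect X}{2\sqrt{t}} - \frac{z - \Expect Z}{2\sqrt{1-t}} \right).
\]
Under the standing tacit assumption that all expectations are finite (and enough integrability to dominate the difference quotients on compact subintervals of $(0,1)$), I will interchange derivative and expectation. This yields
\[
\dot{u}(t) = \frac{\Expect[(X - \Expect X) f(Y_t)]}{2\sqrt{t}} - \frac{\Expect[(Z - \Expect Z) f(Y_t)]}{2\sqrt{1-t}}.
\]

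Next I will identify each expectation with a covariance. For any integrable random variables $A, B$, we have $\Expect[(A - \Expect A) B] = \Cov(A, B)$, since $\Expect[(A-\Expect A)]\,\Expect B = 0$. Applying this with $A = X$, and then with $A = Z$, and taking $B = f(Y_t(X,Z))$ in both cases, gives exactly formula~\eqref{eqn:interp-cov}.

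The only delicate point is justifying the exchange of $\diff/\diff t$ and $\Expect$. I will use the mean value theorem: the difference quotient equals $f(Y_s)\,\dot{Y}_s$ for some intermediate $s$. For $t$ in a compact subinterval of $(0,1)$, the factor $\dot{Y}_s$ is bounded by a constant times $\abs{X - \Expect X} + \abs{Z - \Expect Z}$. Dominated convergence then applies provided $f$ has, say, polynomial growth and $X$, $Z$ have enough moments. In line with the paper's convention, this integrability is assumed rather than spelled out. Everything else is routine calculus.
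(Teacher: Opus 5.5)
Your proposal is correct and matches the paper's proof, which likewise passes the derivative through the expectation by dominated convergence (with integrability assumed tacitly), applies the chain rule to $t \mapsto h(Y_t)$, and recognizes $\Expect[(X - \Expect X) f(Y_t)]$ and $\Expect[(Z - \Expect Z) f(Y_t)]$ as the two covariances. One small remark: that last identification holds for any integrable pair, so the independence of $X$ and $Z$ mentioned in your opening sentence is never actually used.
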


\begin{proof} %
Use dominated convergence to pass the derivative through the expectation.
Apply the chain rule, and identify the covariances.
\end{proof}

\subsection{Interpolation: Independent sum}

The derivative~\eqref{eqn:interp-cov} along the interpolation path exposes two covariances.
When the random variable $X$ is an independent sum, we can employ the
discrete IBP rule (\cref{thm:discrete-ibp-sum}) to treat its covariance.

\begin{proposition}[Interpolation: Independent sum] \label{prop:interp-sum}
As in \cref{sec:indep-sum-exch}, consider the exchangeable triple $(X,X',X'')$
induced by a real independent sum with $n$ terms.  Draw an independent
Gaussian random variable $Z \sim \normal_{\R}(\Expect[X], \Var[X])$. %

Let $h : \R \to \R$ be three times continuously differentiable, with derivative $f \coloneqq \Diff h$.
Define the interpolant $u(t) \coloneqq \Expect h(Y_t)$ where $Y_t$ is
given by~\eqref{eqn:interp}.
For $t \in (0,1)$, the derivative $\dot{u}(t)$ along the interpolation path satisfies
\begin{equation} \label{eqn:interp-diff}
\dot{u}(t) = \frac{\sqrt{t}}{4} \Expect\big[ W(X, X',X'') \cdot \big( \Delta^2 f(Y_t, Y_t', Y_t'') + \Delta^2 f(Y_t, Y_t, Y_t') \big) \big]
\end{equation}
where the exchangeable counterparts $Y_t' \coloneqq Y_t(X', Z)$ and $Y_t'' \coloneqq Y_t(X'', Z)$
and
\[
W(X,X',X'') \coloneqq n \cdot (X' - X'')^2 (X' - X).
\]
\end{proposition}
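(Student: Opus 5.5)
Starting from the formula for $\dot{u}(t)$ in \cref{prop:interp}, the plan is to treat the two covariances separately. The Gaussian covariance is handled by \cref{fact:gauss-ibp}, and the covariance with $X$ by a conditional version of \cref{thm:discrete-ibp-sum}. The two leading terms then cancel exactly, and only the divided-difference remainder survives.

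\emph{Gaussian term.} Condition on $X$. Since $Z$ is independent of $X$, apply \cref{fact:gauss-ibp} to the function $z \mapsto f(Y_t(X,z))$, whose derivative is $\sqrt{1-t}\,\Diff f(Y_t)$. This gives
\[
\Cov(Z, f(Y_t)) = \sqrt{1-t}\,\Var[X]\,\Expect[\Diff f(Y_t)].
\]
Dividing by $2\sqrt{1-t}$, the second term in~\eqref{eqn:interp-cov} equals $\tfrac12 \Var[X]\,\Expect \Diff f(Y_t)$.

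\emph{Sum term.} Condition on $Z=z$, which is independent of the triple $(X,X',X'')$. Set $g(x) \coloneqq f(Y_t(x,z))$. This is twice continuously differentiable, because $h$ has three continuous derivatives. Apply \cref{thm:discrete-ibp-sum} to $g$. We need how divided differences transform under the affine map $x \mapsto Y_t(x,z)$, whose slope is $\sqrt{t}$:
\[
\Diff g = \sqrt{t}\,\Diff f\circ Y_t
\quad\text{and}\quad
\Delta^2 g(a,b,c) = t\,\Delta^2 f\big(Y_t(a,z),Y_t(b,z),Y_t(c,z)\big).
\]
The second identity holds because each order of divided difference picks up one factor of the slope. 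This is the one point that needs care; I would verify it directly from the recursive definition, where each quotient $(a_0-a_k)$ in the $Y$-variables equals $\sqrt{t}$ times the corresponding quotient in the $x$-variables.

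After integrating over $z$, the result is
\[
\Cov(X, f(Y_t)) = \sqrt{t}\,\Var[X]\,\Expect \Diff f(Y_t) + \frac{t}{2}\Expect\big[W\cdot(\Delta^2 f(Y_t,Y_t',Y_t'') + \Delta^2 f(Y_t,Y_t,Y_t'))\big].
\]
Here $Y_t' = Y_t(X',Z)$ and $Y_t'' = Y_t(X'',Z)$, with the same $Z$ throughout, which matches the definitions in the statement. Conditioning on $Z$ is legitimate because $(X,X',X'')$ remains exchangeable and independent of $Z$.

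\emph{Combine.} Divide the sum term by $2\sqrt{t}$ and subtract the Gaussian term. The $\Var[X]\,\Expect\Diff f(Y_t)$ pieces cancel, and the remainder carries the prefactor $\tfrac{t}{2}\cdot\tfrac{1}{2\sqrt{t}} = \tfrac{\sqrt t}{4}$, which is exactly~\eqref{eqn:interp-diff}.

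The only technical obstacles are routine: justifying the conditioning and Fubini steps under the paper's standing integrability assumptions, and the divided-difference scaling identity above. I expect the scaling identity to be the main thing to check carefully, including the confluent cases, which follow by continuity.
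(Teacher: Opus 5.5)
Your proposal is correct and follows the paper's proof. You apply Gaussian IBP to $z \mapsto f(Y_t(X,z))$ and the discrete IBP rule to $x \mapsto f(Y_t(x,Z))$; the two $\tfrac12 \Var[X]\,\Expect \Diff f(Y_t)$ terms cancel, leaving the remainder with prefactor $\sqrt{t}/4$. The only differences are cosmetic: you make the conditioning explicit, and you verify $\Delta^2 g(a,b,c) = t\,\Delta^2 f(Y_t(a,z),Y_t(b,z),Y_t(c,z))$ directly from the recursive definition, whereas the paper cites the chain rule for divided differences.
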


\begin{proof}
\Cref{prop:interp} provides a formula~\eqref{eqn:interp-cov} for the derivative $\dot{u}$
along the interpolation path.  We use integration by parts to rewrite the two covariance terms.
The Gaussian IBP rule (\cref{fact:gauss-ibp}) for the function $z \mapsto f(Y_t(X,z))$ yields
\begin{align*}
\frac{\Cov(Z, f(Y_t(X,Z))) }{2\sqrt{1-t}} = \frac{1}{2} \Var[X] \cdot \Expect[ \Diff f(Y_t(X,Z)) ].
\end{align*}
We have employed the chain rule and the fact that $\Var[Z] = \Var[X]$.
Meanwhile, the discrete IBP rule (\cref{thm:discrete-ibp-sum}) for the function
$x \mapsto f(Y_t(x,Z))$ provides that
\begin{multline*}
\frac{ \Cov(X, f(Y_t(X,Z)))}{2\sqrt{t}} = \frac{1}{2} \Var[X] \cdot \Expect[ \Diff f(Y_t(X,Z)) ] \\
+ \frac{\sqrt{t}}{4} \Expect\big[ W(X,X',X'') \cdot \big( \Delta^2 f(Y_t, Y_t', Y_t'') + \Delta^2 f(Y_t,Y_t,Y_t') \big) \big].
\end{multline*}
We have applied the chain rule (\cref{fact:diff-calculus}) for divided differences.  Introduce the displays into
the expression~\eqref{eqn:interp-cov} for $\dot{u}$ to complete the proof.
\end{proof}

\subsection{Application: Quantitative CLT}

As a warmup, we establish a quantitative central limit theorem (CLT)
for an independent sum of scalar random variables.
This statement is quite standard; we can obtain similar results
from a variety of arguments.

\begin{theorem}[Independent sum: Quantitative CLT] \label{thm:quant-clt}
Let $X$ be a real independent sum~\eqref{eqn:indep-sum} with $n$ terms,
and construct the matching Gaussian random variable $Z \sim \normal_{\R}(\Expect[X], \Var[X])$.
Define the third moment statistic
\begin{equation} \label{eqn:M3-scalar}
M_3 \coloneqq \sum_{i=1}^n \Expect \abs{ S_i - S_i' }^3,
\end{equation}
where $(S_1', \dots S_n')$ is an independent copy of the summands.
For each function $h : \R \to \R$ with three bounded continuous derivatives,
\[
\abs{ \Expect h(X) - \Expect h(Z) }
	\leq \frac{1}{6} \norm{ \Diff^3 h }_{\sup} \cdot {M}_3.
\]
\end{theorem}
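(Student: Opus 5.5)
We will integrate the derivative formula from \cref{prop:interp-sum} along the interpolation path. Since $Y_1 = X$ and $Y_0 = Z$, we have
\[
\Expect h(X) - \Expect h(Z) = u(1) - u(0) = \int_0^1 \dot{u}(t) \idiff{t},
\]
so it suffices to bound $\abs{\dot{u}(t)}$ uniformly and then integrate the weight $\sqrt{t}/4$. Continuity of $u$ at the endpoints follows from dominated convergence, because $h$ has bounded derivatives of order three and $X$ has finite third moments. We may therefore apply \eqref{eqn:interp-diff} on $(0,1)$.

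\textbf{Step 1: bound the divided differences.} We use the mean value property of divided differences: for $f = \Diff h$,
\[
\abs{\Delta^2 f(a_0,a_1,a_2)} \leq \tfrac{1}{2}\norm{\Diff^2 f}_{\sup} = \tfrac{1}{2} \norm{\Diff^3 h}_{\sup},
\]
valid for all real points, including confluent ones (see \cref{app:scalar-diff}). Both terms $\Delta^2 f(Y_t,Y_t',Y_t'')$ and $\Delta^2 f(Y_t,Y_t,Y_t')$ are then bounded by $\tfrac12 \norm{\Diff^3 h}_{\sup}$, so their sum is at most $\norm{\Diff^3 h}_{\sup}$. This gives
\[
\abs{\dot{u}(t)} \leq \frac{\sqrt{t}}{4}\, \norm{\Diff^3 h}_{\sup} \cdot \Expect \abs{W(X,X',X'')}.
\]

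\textbf{Step 2: control $\Expect\abs{W}$.} Condition on $I = i$. Then
\[
X' - X'' = S_i' - S_i'' \quad\text{and}\quad X' - X = S_i' - S_i,
\]
so
\[
\Expect\abs{W} = \sum_{i=1}^n \Expect\big[ (S_i'-S_i'')^2 \abs{S_i'-S_i} \big],
\]
because the factor $n$ cancels the uniform probability $1/n$. Apply Hölder's inequality with exponents $3/2$ and $3$:
\[
\Expect\big[ (S_i'-S_i'')^2 \abs{S_i'-S_i} \big] \leq \big(\Expect\abs{S_i'-S_i''}^3\big)^{2/3}\big(\Expect\abs{S_i'-S_i}^3\big)^{1/3}.
\]
The pairs $(S_i',S_i'')$ and $(S_i,S_i')$ each consist of two independent copies of $S_i$, so both factors equal powers of $\Expect\abs{S_i - S_i'}^3$. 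Hence $\Expect\abs{W} \leq M_3$.

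\textbf{Step 3: integrate.} Combining the two steps,
\[
\abs{\Expect h(X) - \Expect h(Z)} \leq \frac{1}{4}\norm{\Diff^3 h}_{\sup} M_3 \int_0^1 \sqrt{t}\idiff{t} = \frac{1}{6}\norm{\Diff^3 h}_{\sup} M_3,
\]
which is the stated bound.

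The main obstacle is Step 1. It requires the divided-difference mean value bound $\abs{\Delta^2 f} \leq \tfrac12 \norm{\Diff^2 f}_{\sup}$ to hold uniformly, including at coincident arguments. This is supplied by the Hermite–Genocchi representation
\[
\Delta^2 f(a_0,a_1,a_2) = \int_{\text{simplex}} \Diff^2 f\Big(\sum_j \tau_j a_j\Big)\idiff{\tau},
\]
where the simplex has volume $1/2$. We also need to justify differentiating under the expectation in \cref{prop:interp-sum} near $t=0$. This is harmless, since the resulting bound on $\dot{u}$ is integrable on $(0,1)$.
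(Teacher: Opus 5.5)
Your proposal is correct and follows the paper's proof step for step: you integrate the formula from \cref{prop:interp-sum}, bound both second divided differences uniformly by $\tfrac12\norm{\Diff^3 h}_{\sup}$ via the Genocchi--Hermite formula, and control $\Expect\abs{W}$ by $M_3$. The only cosmetic difference is that you condition on $I$ and apply H\"older's inequality, whereas the paper (\cref{lem:W-bd}) uses Young's inequality together with exchangeability; both give the same estimate.
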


After standard smoothing arguments~\cite[Sec.~7.2]{Pol02:Users-Guide}, %
we can obtain estimates in terms of the bounded Lipschitz distance,
which metrizes weak convergence:
\[
\dist_{\mathrm{BL}}(X, Z) \coloneqq \sup\big\{ \labs{\Expect h(X) - \Expect h(Z)} : \norm{h}_{\sup} \vee \norm{h}_{\mathrm{Lip}} \leq 1 \big\}
	\leq 4 {M}_3^{1/3}.
\]
In other words, the total of the third moments of the summands controls
the distance between the independent sum $X$ and a normal random
variable $Z$ with matching expectation and variance.

\subsubsection{Moment bounds}

We begin with a simple lemma that controls the expectation
of the random variable $W(X, X', X'')$.

\begin{lemma}[Discrete IBP: Moment bound] \label{lem:W-bd}
Under the assumptions of \cref{thm:quant-clt}, the random variable $W(X, X', X'')$
from \cref{prop:interp-sum} satisfies the bound
\[
\Expect \abs{W(X,X',X'')} \leq {M}_3.
\]
\end{lemma}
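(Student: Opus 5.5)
We need to bound $\Expect\abs{W} = n\,\Expect\big[(X'-X'')^2\,\abs{X'-X}\big]$. The plan is to express all three differences through the summand at the random index $I$, condition on $I$, and then apply Hölder's inequality to the three resulting factors.

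\textbf{Step 1: reduce to the summand at index $I$.} By construction, $X' - X'' = S_I' - S_I''$ and $X' - X = S_I' - S_I$. Hence
\[
\Expect\abs{W(X,X',X'')} = n \cdot \Expect\big[ (S_I'-S_I'')^2 \abs{S_I'-S_I} \big]
	= \sum_{i=1}^n \Expect\big[ (S_i'-S_i'')^2 \abs{S_i'-S_i} \big].
\]
The second equality holds because $I$ is uniform on $\{1,\dots,n\}$ and independent of everything else, so the factor $n$ cancels the averaging over $I$.

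\textbf{Step 2: apply Hölder for each fixed $i$.} Fix $i$ and set
\[
A \coloneqq S_i' - S_i'', \qquad B \coloneqq S_i' - S_i.
\]
Each of $A$ and $B$ is a difference of two independent copies of $S_i$, so each has the same distribution as $S_i - S_i'$. Hölder's inequality with exponents $(3/2, 3)$ gives
\[
\Expect\big[A^2\abs{B}\big] \leq \big(\Expect\abs{A}^3\big)^{2/3}\big(\Expect\abs{B}^3\big)^{1/3} = \Expect\abs{S_i - S_i'}^3.
\]
Equivalently, one can use Young's inequality in the form $a^2 b \leq \tfrac{2}{3}a^3 + \tfrac{1}{3}b^3$ and take expectations.

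\textbf{Step 3: sum over $i$.} Summing the bound from Step 2 over $i = 1,\dots,n$ gives exactly $M_3$, by the definition~\eqref{eqn:M3-scalar}.

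There is no genuine obstacle in this argument. The only point requiring care is the bookkeeping in Step 1: $X'$ and $X''$ use the \emph{same} index $I$, so all three differences collapse onto a single coordinate, and it is this that produces the clean $n$-cancellation.
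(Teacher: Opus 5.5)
Your proposal is correct and is essentially the paper's argument. The paper applies Young's inequality $a^2 b \leq \tfrac{2}{3} a^3 + \tfrac{1}{3} b^3$ pointwise, uses exchangeability to identify both terms with $n \Expect \abs{X - X'}^3$, and then averages over $I$; you average over $I$ first and apply H\"older coordinatewise, which gives the same bound.
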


\begin{proof}
Apply Young's inequality and invoke exchangeability to obtain
\[
\Expect \abs{W(X,X',X'')}
	\leq n \cdot \left[ \frac{2}{3} \Expect \abs{X'-X''}^3 + \frac{1}{3} \Expect \abs{ X' - X }^3 \right]
	= n \cdot \Expect \abs{ X - X' }^3.
\]
Note that $X - X' = S_I - S_I'$, and take the expectation with respect to the random index $I$.
\end{proof}

\subsubsection{Proof of \cref{thm:quant-clt}}

Consider a function $h : \R \to \R$ whose first three derivatives are bounded and continuous,
and define the derivative $f \coloneqq \Diff h$.  Without loss, we may assume that
the Gaussian random variable $Z$ is independent from the independent sum $X$.

Construct the interpolant $u(t) \coloneqq \Expect h(Y_t)$ where $Y_t$ is defined in~\eqref{eqn:interp}.
\Cref{prop:interp-sum} provides a formula~\eqref{eqn:interp-diff} for the derivative $\dot{u}$
along the interpolation path.  Making a uniform bound on both of the second divided differences,
\[
\abs{ \dot{u}(t) } \leq \frac{\sqrt{t}}{2} \norm{ \Delta^2 f }_{\sup} \cdot \Expect{ \abs{W(X,X',X'')} }
	\leq \frac{\sqrt{t}}{4} \norm{ \Diff^3 h }_{\sup} \cdot {M}_3.
\]
To pass from the divided difference to the derivative, we employed~\eqref{eqn:diff-deriv-bd} and the relation $f = \Diff h$.
The bound for the expectation is \cref{lem:W-bd}.  By the fundamental theorem of calculus,
\[
\abs{ \Expect h(X) - \Expect h(Z) } = \abs{ u(1) - u(0) } \leq \int_0^1 \abs{ \dot{u}(t) } \idiff{t}
	\leq \frac{1}{6} \norm{ \Diff^3 h }_{\sup} \cdot {M}_3.
\]
This is the required result.
\hfill\qed

\subsection{Application: Monomial moments}

As a more sophisticated example of this machinery,
we compare the monomial moments of an independent sum
with the monomial moments of a matching Gaussian random variable.
This argument is more delicate because the monomials are unbounded,
and it takes full advantage of exchangeability.
In contrast to \cref{thm:quant-clt}, this result provides
a more precise relationship between the tails of the independent sum
and the normal model.

\begin{theorem}[Independent sum: Monomial moments] \label{thm:scalar-poly}
Let $X$ be a real independent sum~\eqref{eqn:indep-sum} with $n$ terms,
and construct the matching Gaussian random variable $Z \sim \normal_{\R}(\Expect[X], \Var[X])$.
Define the third moment statistic
\[
{M}_{3,p/3} \coloneqq \lnorm{ \sum_{i=1}^n \abs{S_i - S_i'}^3 }_{p/3},
\]
where $(S_1', \dots, S_n')$ is an independent copy of the summands.
For $p=2$ and for each real $p \geq 4$, the $p$th moments satisfy the comparisons
\begin{align} \label{eqn:scalar-poly-cube}
\labs{ \norm{ X }_{p} - \norm{ Z }_{p} }^3
	\leq \big\vert \norm{ X }_{p}^3 - \norm{ Z }_{p}^3 \big\vert
	\leq \frac{1}{2} (p-1)(p-2) \cdot {M}_{3,p/3}.
\end{align}
In particular, the relative error satisfies
\begin{align} \label{eqn:scalar-poly-rel}
\frac{\labs{ \norm{ X }_{p} - \norm{ Z }_{p} }}{\norm{Z}_p}
	\leq \frac{1}{2} (p-1)(p-2) \cdot \frac{{M}_{3,p/3}}{\norm{Z}_p^3}.
\end{align}
\end{theorem}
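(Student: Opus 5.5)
The plan is to run the interpolation machinery of \cref{prop:interp-sum} with the test function $h(a) \coloneqq \abs{a}^p$, and to turn the formula for $\dot{u}$ into a differential inequality for $u(t) \coloneqq \Expect \abs{Y_t}^p = \norm{Y_t}_p^p$. The case $p = 2$ is immediate. Since $\Expect X = \Expect Z$ and $\Var[X] = \Var[Z]$, we get $\Expect X^2 = \Expect Z^2$, so both sides of~\eqref{eqn:scalar-poly-cube} vanish. For real $p \geq 4$, the function $h$ is three times continuously differentiable, with $f = \Diff h = p \abs{a}^{p-2} a$ and
\[
\abs{\Diff^2 f(a)} = p(p-1)(p-2) \abs{a}^{p-3}.
\]
This is a \emph{convex} function of $a$ exactly when $p \geq 4$, which is why the range $2 < p < 4$ is excluded. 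The outer inequalities are elementary. For $a, b \geq 0$ we have $\abs{a-b}^3 \leq \abs{a^3 - b^3}$ and $\abs{a-b}\, b^2 \leq \abs{a^3-b^3}$; these give the first inequality in~\eqref{eqn:scalar-poly-cube} and the relative bound~\eqref{eqn:scalar-poly-rel}. The real content is $\abs{u(1)^{3/p} - u(0)^{3/p}} \leq \tfrac12 (p-1)(p-2) M_{3,p/3}$.

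\textbf{Bounding the divided differences.} I would use the Hermite--Genocchi representation from \cref{app:scalar-diff}: $\Delta^2 f(a,b,c)$ is the integral of $\Diff^2 f$ over the standard $2$-simplex of barycentric combinations of $a, b, c$, a region of area $1/2$. Convexity of $\abs{\Diff^2 f}$ and Jensen's inequality then give
\[
\abs{\Delta^2 f(a,b,c)} \leq \tfrac{1}{6} p(p-1)(p-2) \big( \abs{a}^{p-3} + \abs{b}^{p-3} + \abs{c}^{p-3} \big).
\]
Insert this into~\eqref{eqn:interp-diff}. That produces six terms of the form $\Expect[ \abs{W} \abs{Y_t^{\sharp}}^{p-3} ]$ with $Y_t^\sharp \in \{Y_t, Y_t', Y_t''\}$. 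Next apply Young's inequality pointwise:
\[
\abs{W} \leq n \big( \tfrac23 \abs{X'-X''}^3 + \tfrac13 \abs{X'-X}^3 \big).
\]

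\textbf{The main obstacle.} The weights $Y_t'$ and $Y_t''$ depend on the random index $I$, so we cannot simply average over $I$. The step I expect to be delicate is rearranging each term, via exchangeability of $(X,X',X'')$ conditional on $I$ and with $Z$ held fixed, so that the cubic factor is a difference $\abs{X - X^\flat}^3$ not involving the point that carries the weight, and the weight is the unprimed $\abs{Y_t}^{p-3}$. For example, the swap $X \leftrightarrow X'$ turns $\Expect[\abs{X'-X''}^3 \abs{Y_t'}^{p-3}]$ into $\Expect[\abs{X-X''}^3 \abs{Y_t}^{p-3}]$. The terms where the weight sits on a point inside the cube, such as $\abs{X'-X}^3 \abs{Y_t}^{p-3}$, are already of this form. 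Once the weight is $\abs{Y_t}^{p-3}$, which does not depend on $I$, averaging over $I$ gives
\[
n \, \Expect\big[\abs{X - X^\flat}^3 \abs{Y_t}^{p-3}\big] = \Expect\Big[ \sum\nolimits_i \abs{S_i - S_i^\flat}^3 \abs{Y_t}^{p-3} \Big],
\]
where $S^\flat$ denotes an independent copy of the summands. Hölder's inequality with exponents $p/3$ and $p/(p-3)$ bounds this by $M_{3,p/3} \, u(t)^{1-3/p}$. The Young weights sum to one, so each of the six terms is at most $M_{3,p/3}\, u^{1-3/p}$. Altogether,
\[
\abs{\dot u(t)} \leq \tfrac{\sqrt t}{4}\, p(p-1)(p-2)\, M_{3,p/3}\, u(t)^{1-3/p}.
\]

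\textbf{Solving the differential inequality.} Finally, differentiate $u^{3/p}$ to obtain $\abs{\tfrac{\diff}{\diff t} u^{3/p}} \leq \tfrac34 \sqrt t\, (p-1)(p-2) M_{3,p/3}$. Integrating over $t \in [0,1]$, with $\int_0^1 \sqrt t \idiff t = 2/3$, yields $\abs{\norm{X}_p^3 - \norm{Z}_p^3} \leq \tfrac12 (p-1)(p-2) M_{3,p/3}$. If $u$ vanishes somewhere, I would handle it by working with $(u+\epsilon)^{3/p}$ and letting $\epsilon \downarrow 0$.
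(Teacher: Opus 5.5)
Your proposal is correct and follows essentially the same route as the paper: the interpolation formula~\eqref{eqn:interp-diff}, the Genocchi--Hermite representation combined with convexity of $\abs{\Diff^2 f}$, exchangeability to move the weight onto $\abs{Y_t}^{p-3}$, Young's inequality, averaging over $I$, H\"older with exponents $p/3$ and $p/(p-3)$, and integration of $u^{3/p}$. The differences are cosmetic. You average over the simplex vertices instead of taking the maximum as in \cref{lem:indep-sum-cvx-bd}, and you apply Young pointwise before the swaps rather than inside the $\set{L}_{p/3}$ norm at the end; both give the constant $\tfrac12(p-1)(p-2)$.
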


The third moment statistic $M_{3,p/3}$ may be hard to calculate directly.
Using Rosenthal inequalities (\cref{fact:rosenthal-scalar}),
we can easily obtain bounds that reflect the statistics of individual summands.

\begin{corollary}[Independent sum: Monomial moments] \label{cor:scalar-poly}
Instate the assumptions of \cref{thm:scalar-poly}.  Define statistics
\[
M_3 \coloneqq \sum_{i=1}^n \Expect \abs{ S_i - S_i' }^3
\quad\text{and}\quad
L_p \coloneqq \norm{ \max\nolimits_i \abs{S_i - S_i'} }_p.
\]
For each $p \geq 4$,
\[
\labs{ \norm{ X }_{p} - \norm{ Z }_{p} }
	\leq (p^2 M_3 )^{1/3} + p L_p.
\]
\end{corollary}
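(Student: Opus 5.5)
The plan is to derive \cref{cor:scalar-poly} from \cref{thm:scalar-poly} by bounding the statistic $M_{3,p/3}$ with a Rosenthal inequality, and then simplifying the numerical constants.

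\textbf{Step 1.} By the first inequality in~\eqref{eqn:scalar-poly-cube},
\[
\labs{\norm{X}_p - \norm{Z}_p} \leq \big(\tfrac12 (p-1)(p-2) M_{3,p/3}\big)^{1/3}.
\]
It therefore suffices to show that $\tfrac12(p-1)(p-2) M_{3,p/3} \lesssim p^2 M_3 + (pL_p)^3$ with suitable constants. Then subadditivity of the cube root, $(a+b)^{1/3} \leq a^{1/3} + b^{1/3}$, splits the bound into the two advertised terms.

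\textbf{Step 2.} Set $r \coloneqq p/3 \geq 4/3 > 1$ and $T_i \coloneqq \abs{S_i - S_i'}^3 \geq 0$. The $T_i$ are independent and nonnegative, and $M_{3,p/3} = \norm{\sum_i T_i}_r$. We apply the scalar Rosenthal inequality for nonnegative summands (\cref{fact:rosenthal-scalar}), which I expect to have the form
\[
\Big\| \sum_i T_i \Big\|_r \leq C_1 \sum_i \Expect T_i + C_2\, r \, \big\| \max_i T_i \big\|_r,
\]
with small constants, say $C_1 = 2$ and $C_2 = e$ or similar. Here $\sum_i \Expect T_i = M_3$, and
\[
\norm{\max_i T_i}_r = \norm{\max_i \abs{S_i-S_i'}}_{p}^3 = L_p^3.
\]
Hence $M_{3,p/3} \leq C_1 M_3 + C_2 (p/3) L_p^3$.

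\textbf{Step 3.} Substitute into Step 1:
\[
\tfrac12(p-1)(p-2)M_{3,p/3} \leq \tfrac{C_1}{2} p^2 M_3 + \tfrac{C_2}{6} p^3 L_p^3.
\]
When $C_1 \leq 2$ and $C_2 \leq 6$, this is at most $p^2 M_3 + p^3 L_p^3$, and taking cube roots gives $(p^2M_3)^{1/3} + pL_p$.

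\textbf{Main obstacle.} The only delicate point is the constants in the Rosenthal inequality. If \cref{fact:rosenthal-scalar} carries larger constants, I would absorb the slack using $(p-1)(p-2) \leq p^2$, which is actually loose, since for $p \geq 4$ we have $(p-1)(p-2) \leq \tfrac34 p^2$. If needed, I would instead use a version of Rosenthal stated with the moment index in the linear term, $r = p/3$, whose factor $1/3$ helps.
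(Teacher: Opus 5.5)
Your proposal is correct and follows the paper's proof. The paper applies the positive-case Rosenthal inequality (\cref{fact:rosenthal-scalar}) to the summands $\abs{S_i - S_i'}^3$ at exponent $p/3$, obtaining $M_{3,p/3} \leq [M_3^{1/2} + (2 \vee \sqrt{p/3-1})\, L_p^{3/2}]^2 \leq 2M_3 + 2pL_p^3$, which is exactly your threshold case $C_1 = 2$, $C_2 = 6$ (tight at $p = 4$), and then substitutes into \eqref{eqn:scalar-poly-cube} as you do; your fallback is therefore never needed, which is fortunate because its claim $(p-1)(p-2) \leq \tfrac34 p^2$ fails for $p > 6 + 2\sqrt{7} \approx 11.3$.
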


\begin{proof}[Proof sketch]
For $p \geq 4$, Rosenthal's inequality (\cref{fact:rosenthal-scalar}, positive case)
implies that
\begin{equation} \label{eqn:M3-bd-scalar}
M_{3,p/3} \leq \left[ M_3^{1/2} + \big(2 \vee \sqrt{p/3-1} \big)\, L_{p}^{3/2} \right]^2
	\leq 2 M_3 + 2p L_p^3.
\end{equation}
Substitute this bound into the moment comparison~\eqref{eqn:scalar-poly-cube}.
\end{proof}

\subsubsection{Example: Uniformly bounded summands}

As a concrete example, consider an independent sum $X$ of $n$ \hilite{standardized} real
random variables (i.e., each with expectation zero and variance one).  Moreover, assume that each summand is subject to the uniform bound $\abs{S_i} \leq L$.  We determine that
the statistics ${M}_3 \leq 6Ln$ and ${L}_{p} \leq 2L$,
while the Gaussian model satisfies $\norm{ Z }_{p} \geq \sqrt{pn / \econst}$.
The relative error estimate~\eqref{eqn:scalar-poly-rel}
and the bound~\eqref{eqn:M3-bd-scalar} for $M_{3,p/3}$ yield
\[
\frac{\labs{ \norm{X}_{p} - \norm{Z}_{p} }}{\norm{Z}_{p}}
	\leq \frac{p^2 M_3 + pL_p^3}{\norm{Z}_p^3}
	\leq \frac{6p^2 L n + 2p^3 L^3}{(pn/\econst)^{3/2}}
	\lesssim \left(\frac{p L^2}{n}\right)^{1/2} + \left(\frac{pL^2}{n}\right)^{3/2}.
\]
In case $pL^2 \ll n$, the comparison is nontrivial,
and we confirm the convergence rate $\mathcal{O}(n^{-1/2})$.

\subsubsection{Convexity bounds}

The analysis exploits convexity properties of the derivatives
of the power function to bound the derivative on the interpolation path.

\begin{lemma}[Discrete IBP: Convexity bound] \label{lem:indep-sum-cvx-bd}
Consider a function $f : \R \to \R$ for which $\abs{ \Diff^2 f }$ is convex.
Let $(X,X',X'')$ be an exchangeable triple.
For a function $Y \coloneqq Y(X)$ with exchangeable counterparts
$Y' \coloneqq Y(X')$ and $Y'' \coloneqq Y(X'')$ and a function
$W \coloneqq W(X,X',X'')$, we have the bound
\[
\Expect \labs{ W(X,X',X'') \cdot \Delta^2 f(Y,Y',Y'') }
	\leq \frac{1}{2} \max\nolimits_{\pi} \Expect\big[ \Expect\big[ \abs{ W^{\pi}(X,X',X'') } \lcondbar X \big] \cdot \abs{\Diff^2 f(Y)}\big]
\]
The identical upper bound holds with $\Delta^2 f(Y,Y,Y')$ in place of $\Delta^2 f(Y,Y',Y'')$.
\end{lemma}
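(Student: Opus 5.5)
We would begin from the Hermite--Genocchi integral representation of the second divided difference. Presumably this is among the facts about scalar divided differences collected in \cref{app:scalar-diff}. It reads
\[
\Delta^2 f(a_0,a_1,a_2) = \int_{\Sigma_2} \Diff^2 f(s_0 a_0 + s_1 a_1 + s_2 a_2) \, \diff\mu(s),
\]
where $\Sigma_2$ is the probability simplex in $\R^3$ and $\mu$ is the uniform measure on it. Since $\mu$ has total mass $1/2$, this is consistent with $\Delta^2 f(a,a,a) = \Diff^2 f(a)/2$. Taking absolute values and using convexity of $\abs{\Diff^2 f}$ inside the integral gives the pointwise bound
\[
\abs{\Delta^2 f(a_0,a_1,a_2)} \leq \int_{\Sigma_2} \big( s_0 \abs{\Diff^2 f(a_0)} + s_1 \abs{\Diff^2 f(a_1)} + s_2 \abs{\Diff^2 f(a_2)} \big) \diff\mu(s)
	= \frac{1}{6}\sum_{j=0}^2 \abs{\Diff^2 f(a_j)}.
\]
The last step uses $\int s_j \, \diff\mu = \tfrac{1}{2}\cdot\tfrac{1}{3} = 1/6$ by symmetry. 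We apply this with $(a_0,a_1,a_2) = (Y,Y',Y'')$. For the confluent case $(Y,Y,Y')$ we get the weights $\tfrac{2}{6}\abs{\Diff^2 f(Y)} + \tfrac16\abs{\Diff^2 f(Y')}$.

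Next we multiply by $\abs{W}$ and take expectations. This gives three terms (two in the confluent case), each of the form $\tfrac16 \Expect[\abs{W(X,X',X'')}\cdot \abs{\Diff^2 f(Y(X^{(j)}))}]$, where $X^{(j)}$ is one of $X,X',X''$. For each term we choose a permutation $\pi$ of the triple that moves $X^{(j)}$ into the first slot. By exchangeability, the term equals $\tfrac16\Expect[\abs{W^\pi(X,X',X'')}\,\abs{\Diff^2 f(Y(X))}]$. Because $Y$ is a function of $X$, the tower property gives $\tfrac16 \Expect[\Expect[\abs{W^\pi}\condbar X]\,\abs{\Diff^2 f(Y)}]$. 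Bounding each term by the maximum over $\pi$ and summing the weights $3\cdot\tfrac16 = \tfrac12$, or $\tfrac26+\tfrac16=\tfrac12$ in the confluent case, yields the stated constant $1/2$ in both cases.

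The only delicate point is the normalization of the Hermite--Genocchi measure, which must give exactly the factor $1/2$. We would check it against $\Delta^2 f(a,a,a) = \Diff^2 f(a)/2$. We also need to note that exchangeability of $(X,X',X'')$ is what permits replacing $W$ by $W^\pi$ while moving the chosen variable to the first slot. No smoothness beyond continuity of $\Diff^2 f$ is needed.
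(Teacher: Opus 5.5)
Your proof is correct and follows the paper's argument: the Genocchi--Hermite representation, convexity of $\abs{\Diff^2 f}$ over the simplex, exchangeability to move the chosen counterpart into the first slot, and conditioning on $X$. The only difference is cosmetic. You apply convexity pointwise and integrate the barycentric weights ($1/6$ each), which gives an average over the counterparts before you pass to the maximum; the paper instead bounds the convex integrand by its value at a vertex of $\set{T}_2$ and multiplies by the volume $1/2$, and your bookkeeping also makes the confluent case $(Y,Y,Y')$ explicit.
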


Exchangeability is critical to obtain a bound that only depends
on the random variable $Y$ and not the counterparts $Y', Y''$.
The form of the result allows us to execute a moment comparison argument.

\begin{proof}
Using the Genocchi--Hermite formula (\cref{fact:diff-integral}) for the second divided difference, %
\begin{align*}
E &\coloneqq \Expect \abs{ W(X,X',X'') \cdot \Delta^2 f(Y,Y',Y'') } \\
	&\leq \int_{\set{T_2}} \Expect\big[ \abs{ W(X, X',X'') } \cdot \abs{ \Diff^2 f ( \tau_0 Y + \tau_1 Y' + \tau_2 Y'' ) } \big]\idiff{\vct{\tau}}.
\end{align*}
Since the function $\abs{\Diff^2 f}$ is convex, the integrand is a convex function of the triple
$\vct{\tau} = (\tau_0, \tau_1, \tau_2)$ on the simplex $\set{T}_2$.
Therefore, it attains its maximum when $\vct{\tau}$ is one of the standard
basis vectors.  The volume of $\set{T}_2$ is $1/2$, so we reach the bound
\[
E \leq \frac{1}{2} \max\nolimits_{i\in \{ 0,1, 2\}} \Expect\big[ \abs{ W(X,X',X'') }\cdot \abs{ \Diff^2 f( Y^{(i)} ) } \big].
\]
The notation $Y^{(i)}$ selects the $i$th exchangable counterpart, so it counts the number of prime symbols.
By exchangeability, we can introduce a permutation $\pi$ to swap $X^{(i)}$ with $X^{(0)} = X$,
resulting in an inequality of the form
\[
E \leq \frac{1}{2} \max\nolimits_{\pi} \Expect\big[ \abs{ W^{\pi}(X,X',X'')} \cdot \abs{ \Diff^2 f(Y) } \big].
\]
Last, we condition on $X$ to average out the extra randomness in $(X',X'')$.
\end{proof}

\subsubsection{Proof of \cref{thm:scalar-poly}}

The independent sum $X$ and the Gaussian proxy $Z$ share the same expectation and variance, so the statement of \cref{thm:scalar-poly} is trivially correct for $p = 2$.

Assume that $p \geq 4$.
Consider the power function $h(a) \coloneqq \abs{a}^{p}$
and the interpolants $u(t) \coloneqq \Expect h(Y_t) = \Expect \abs{Y_t}^{p}$.
The derivative of the power function $f(a) \coloneqq (\Diff h)(a) = p (\sgn a) \abs{a}^{p-1}$.
The second derivative of $f$ is continuous:
\begin{equation} \label{eqn:scalar-pow-der2}
(\Diff^2 f)(a) = p(p-1)(p-2) (\sgn a) \abs{a}^{p-3},
\end{equation}
The absolute value $\abs{\Diff^2 f}$ is evidently convex since $p \geq 4$. %

The formula~\eqref{eqn:interp-diff} for the derivative $\dot{u}(t)$ of the interpolant from \cref{prop:interp-sum} yields
\begin{align*}
\abs{ \dot{u}(t) } &\leq \frac{\sqrt{t}}{4} \Expect \labs{ W(X, X',X'') \cdot \big( \Delta^2 f(Y_t, Y_t', Y_t'') + \Delta^2 f(Y_t, Y_t, Y_t') \big) } \\
	&\leq \frac{\sqrt{t}}{4} \max\nolimits_{\pi} \Expect\big[ \Expect\big[ \abs{ W^{\pi}(X,X',X'') } \lcondbar X \big] \cdot \abs{\Diff^2 f(Y_t)} \big] \\
	&= \frac{p(p-1)(p-2)\sqrt{t}}{4} \max\nolimits_{\pi} \Expect\big[ \Expect\big[ \abs{ W^{\pi}(X,X',X'') } \lcondbar X \big] \cdot \abs{Y_t}^{p-3} \big].
\end{align*}
We have applied \cref{lem:indep-sum-cvx-bd} to pass from the second divided difference to the second derivative, which has the explicit expression~\eqref{eqn:scalar-pow-der2}.

To continue, invoke H{\"o}lder's inequality with exponents $\theta = 3/p$ and $\theta' = (p - 3)/p$.
This step brings forward a copy of the interpolant $u(t)$: %
\begin{equation} \label{eqn:scalar-pow-diff-ineq}
\begin{aligned}
\abs{ \dot{u}(t) } &\leq \frac{p(p-1)(p-2)\sqrt{t}}{2} \max\nolimits_{\pi} \norm{ \Expect[ \abs{ W^{\pi}(X,X',X'') } \condbar X ] }_{p/3} 
\cdot (\Expect \abs{Y_t}^{p} )^{1 - 3/p} \\
	&\leq \frac{p(p-1)(p-2)\sqrt{t}}{4} \cdot {M}_{3,p/3} \cdot \abs{u(t)}^{1 - 3/p}.
\end{aligned}
\end{equation}
To reach the second line, we have bounded the $\set{L}_{p/3}$ norm by means of a short argument:
\begin{multline*}
\norm{ \Expect[ \abs{ W^{\pi}(X,X',X'') } \condbar X ] }_{p/3}
	\leq \norm{ \Expect_I \abs{ W^{\pi}(X,X',X'')}  }_{p/3} \\
	= \norm{ \Expect_I \abs{ W(X,X',X'') } }_{p/3}
	= \lnorm{ \sum_{i=1}^n \abs{ (S_i'' - S_i')^2 (S_i'' - S_i) } }_{p/3}
	\leq {M}_{3,p/3}. 
\end{multline*}
The operator $\Expect_I$ computes the expectation with respect to the random
index $I$, and we have used exchangeability to remove the permutation.
The last step, based on Young's inequality, parallels the proof of~\cref{lem:W-bd}.

Finally, we solve the differential inequality~\eqref{eqn:scalar-pow-diff-ineq}.
Introduce the auxiliary function $\phi(t) \coloneqq u(t)^{3/p}$ for $t \in [0,1]$.
By the fundamental theorem of calculus and the chain rule,
\begin{align*}
\abs{ u(1)^{3/p} - u(0)^{3/p} }
	&= \abs{ \phi(1) - \phi(0) }
	\leq \int_0^1 \labs{ \ddx{t} \phi(t) } \idiff{t}
	= \frac{3}{p} \int_0^1 \abs{u(t)}^{3/p - 1} \cdot \abs{\dot{u}(t)} \idiff{t}  \\
	&\leq \frac{3}{2} (p-1)(p - 2) \cdot {M}_{3,p/3} \int_0^1 \sqrt{t} \idiff{t}
	= \frac{1}{2} (p-1)(p - 2) \cdot {M}_{3,p/3}.
\end{align*}
Since $u(1) = \Expect \abs{X}^{p}$ and $u(0) = \Expect \abs{Z}^{p}$,
this argument delivers the main result.
The other bounds follow from the numerical inequalities
\begin{equation} \label{eqn:cube-ineqs}
\abs{ a^3 - b^3 } \geq \abs{ a - b }^3
\quad\text{and}\quad
\abs{ a^3 - b^3 } \geq \abs{ a - b } \cdot (a^2 \vee b^2).
\end{equation}
The estimates~\eqref{eqn:cube-ineqs} are valid whenever $a, b \geq 0$.
\hfill\qed

\section{Calculus for rational matrix functions}
\label{sec:mtx-calculus}

To extend the scalar arguments to matrices, we need to introduce some
elements of the difference calculus for matrix functions.
There is nothing deep in this approach---it simply provides
a mechanism for the systematic calculation of matrix differences.
We will restrict our attention to rational functions,
as they suffice for our purposes and minimize technicalities.

The material in this section is adapted from
the monograph~\cite{KV14:Foundations-Free}
and the book~\cite[Sec.~3.2]{Hig08:Functions-Matrices},
where you may find proofs and further information.
These definitions are closely related to Voiculescu's noncommutative
derivative~\cite{Voi98:Analogues-Entropy-V}.

\subsection{Intuition: Nil-square infinitesimals}
\label{sec:infinitesimals}

An early approach to calculus involved \term{infinitesimals},
quantities that are smaller than every real number---yet nonzero.
While this perspective may be logically problematic,
it is both intuitive and useful~\cite{Tro99:Infinitesimals}.
This section sketches an algebraic formulation
that motivates a difference calculus for matrix functions.

Suppose that we want to compute the
derivative of the power function $f(a) \coloneqq a^p$
for a real argument $a \in \R$ and a natural number $p \in \N$.
We can conceive of the derivative as the relative 
change in the value of the function when its argument is
perturbed by an infinitesimal amount.  
To that end, let us moot the existence of a \term{nil-square infinitesimal},
an object $\iota$ with the property that $\iota \neq 0$
but $\iota^2 = 0$.
By a formal application of the binomial theorem and the nil-square property,
\[
f(a + \iota) - f(a) = (a + \iota)^p - a^p = p a^{p-1} \iota + {p \choose 2} a^{p-2} \iota^2
	+ \dots + \iota^{p} = p a^{p-1} \iota.
\]
The derivative $\Diff f(a) = p a^{p-1}$ agrees the coefficient of the infinitesimal $\iota$.
Similar heuristics are effective for any rational function.

We can realize the nil-square infinitesimal by working in the algebra
of upper-triangular $2 \times 2$ matrices.  Associate the real
number $a \in \R$ with the diagonal matrix $\diag(a, a)$,
and define the infinitesimal $\mtx{\iota}$ to be the Jordan block with eigenvalue zero:
\[
a \mapsto a \Id_2 = \begin{bmatrix} a & 0 \\ 0 & a \end{bmatrix}
\quad\text{and}\quad
\mtx{\iota} \coloneqq \begin{bmatrix} 0 & 1 \\ 0 & 0 \end{bmatrix}.
\]
Note that $\mtx{\iota}^2 = \mtx{0}$.
The same calculation from the last paragraph---now fully justified---yields
\[
f( a \Id_2 + \mtx{\iota} ) - f(a \Id_2)
	= \begin{bmatrix} a & 1 \\ 0 & a \end{bmatrix}^p
	- \begin{bmatrix} a & 0 \\ 0 & a \end{bmatrix}^p
	= \begin{bmatrix} 0 & p a^{p-1} \\ 0 & 0 \end{bmatrix}
	= p a^{p-1} \cdot \mtx{\iota}.
\]
The derivative $\Diff f(a) = p a^{p-1}$ appears as the coefficient of the infinitesimal $\mtx{\iota}$.

We can implement divided difference calculations with a related formalism.
Associate the pair $(a, b)$ of real numbers with the diagonal matrix $\diag(a, b)$.
Observe that
\[
\left( \begin{bmatrix} a & 0 \\ 0 & b \end{bmatrix} + \mtx{\iota} \right)^p
	- \begin{bmatrix} a & 0 \\ 0 & b \end{bmatrix}^p
	= \begin{bmatrix} 0 & \sum_{q=0}^{p-1} a^q b^{p-1-q} \\ 0 & 0 \end{bmatrix}
	= \left( \sum_{q=0}^{p-1} a^q b^{p-1-q} \right) \cdot \mtx{\iota}.
\]
The divided difference $\Delta f(a, b) = \sum_{q=0}^{p-1} a^q b^{p-1-q}$ appears as the coefficient
of the infinitesimal $\mtx{\iota}$.
Remarkably, these techniques extend to rational functions of square matrices.

\subsection{Rational matrix functions}

Introduce the class
$\coll{P}$ of univariate polynomials on $\C$, and define the class $\coll{R}$ of rational functions
acting on $\C$.  That is,
\[
\coll{R} \coloneqq \left\{ a \mapsto \frac{\phi(a)}{\psi(a)} : \text{$\phi, \psi \in \coll{P}$} \right\}.
\]
We can apply a rational function $f$ to a square matrix $\mtx{A} \in \M_d$ of any dimension $d$,
provided that the spectrum of $\mtx{A}$ contains no pole of $f$.  For example,
\[
f(a) \coloneqq \frac{a^2 + 1}{a - \iunit}
\quad\text{implies}\quad
f(\mtx{A}) = (\mtx{A}^2 + 1) (\mtx{A} - \iunit)^{-1},
\quad\text{provided $\iunit \notin \spec(\mtx{A})$.}
\]
As usual, we identify scalars with scalar matrices: $z \coloneqq z \Id$ for $z \in \C$.
Since rational functions of a single matrix commute, we can form the product in either
order, so $f(\mtx{A}) = (\mtx{A} - \iunit)^{-1} (\mtx{A}^2 + 1)$.

\subsection{Matrix differences}

Next, we introduce the \term{matrix difference operator} for rational matrix functions.
The simple and elegant definition is inspired by the calculations in
\cref{sec:infinitesimals}.

\begin{definition}[Matrix difference] \label{def:nc-diff}
Fix a rational function $f \in \coll{R}$.
Consider square matrices $\mtx{A}_0, \mtx{A}_1, \mtx{H} \in \M_d$ of the same dimension,
and assume that $\spec(\mtx{A}_0) \cup \spec(\mtx{A}_1)$ contains no pole of $f$.
The \term{matrix difference} of $f$ at the pair $(\mtx{A}_0, \mtx{A}_1)$
in the direction $\mtx{H}$ is the matrix
\[
\Delta f(\mtx{A}_0, \mtx{A}_1)[\mtx{H}] \coloneqq
	\northeast f\left( \begin{bmatrix} \mtx{A}_0 & \mtx{H} \\ \mtx{0} & \mtx{A}_1 \end{bmatrix} \right) 
	\in \M_d.
\]
The $\northeast$ map extracts the $(1, 2)$ block from the $2 \times 2$ block matrix.
\end{definition}

This definition allows us to compute the matrix difference for an arbitrary
rational function.  %
We offer two key examples.

\begin{example}[Power function: Matrix differences] \label{ex:power-mtx-diff}
Choose a natural number $p \in \N$, and consider the power function $f(a) \coloneqq a^p$ for $a \in \C$.
Fix arbitrary matrices $\mtx{A}, \mtx{B}, \mtx{H} \in \M_d$.  By repeated multiplication,
\[
\begin{bmatrix} \mtx{A} & \mtx{H} \\ \mtx{0} & \mtx{B} \end{bmatrix}^p
	= \begin{bmatrix} \mtx{A}^p & \sum_{q=0}^{p-1} \mtx{A}^q \mtx{H} \mtx{B}^{p-1-q} \\ \mtx{0} & \mtx{B}^p \end{bmatrix}.
\]
Therefore, \cref{def:nc-diff} yields
\[
\Delta f(\mtx{A}, \mtx{B})[\mtx{H}]
	= \sum_{q=0}^{p-1} \mtx{A}^q \mtx{H} \mtx{B}^{p-1-q}.
\]
This formula is consistent with the algebraic identity for a difference of powers:
\[
\mtx{A}^p - \mtx{B}^p = \sum_{q=0}^{p-1} \mtx{A}^q (\mtx{A} - \mtx{B}) \mtx{B}^{p-1-q}
	= \Delta f(\mtx{A}, \mtx{B})[\mtx{A} - \mtx{B}].
\]
Furthermore, the confluent matrix difference agrees with the ordinary directional derivative:
\[
\Diff f(\mtx{A})[\mtx{H}] \coloneqq \lim\nolimits_{t \downarrow 0} t^{-1} ((\mtx{A} + t\mtx{H})^p - \mtx{A}^p)
	= \sum_{q=0}^{p-1} \mtx{A}^q \mtx{H} \mtx{A}^{p-1-q}
	= \Delta f(\mtx{A}, \mtx{A})[ \mtx{H} ].
\]
As with the derivative, we interpret the matrix difference $\Delta f(\mtx{A}, \mtx{B})$
as a linear operator on the direction matrix $\mtx{H}$.
This operator generalizes the divided difference for scalars.
\end{example}

\begin{example}[Resolvents: Matrix differences]
Choose a complex number $\zeta \in \C \setminus \R$ with nonzero imaginary part,
and consider the resolvent function
$R_{\zeta}(a) \coloneqq (\zeta - a)^{-1}$ for $a \in \R$.
Fix \hilite{self-adjoint} matrices $\mtx{A}, \mtx{B}, \mtx{H} \in \Sym_d$,
which ensures that $\zeta \notin \spec(\mtx{A}) \cup \spec(\mtx{B})$.
By direct calculation, %
\[
R_{\zeta}\left( \begin{bmatrix} \mtx{A} & \mtx{H} \\ \mtx{0} & \mtx{B} \end{bmatrix} \right)
	= \begin{bmatrix} \zeta - \mtx{A} & -\mtx{H} \\ \mtx{0} & \zeta - \mtx{B} \end{bmatrix}^{-1}
	= \begin{bmatrix} (\zeta - \mtx{A})^{-1} & (\zeta - \mtx{A})^{-1} \mtx{H} (\zeta - \mtx{B})^{-1} \\ \mtx{0} & (\zeta - \mtx{B})^{-1} \end{bmatrix}.
\]
Therefore, the matrix difference of the resolvent function satisfies
\[
\Delta R_{\zeta}(\mtx{A}, \mtx{B})[\mtx{H}] = (\zeta - \mtx{A})^{-1} \mtx{H} (\zeta - \mtx{B})^{-1}
	= R_{\zeta}(\mtx{A}) \mtx{H} R_{\zeta}(\mtx{B}).
\]
This formula is consistent with the (second) resolvent identity:
\[
R_{\zeta}(\mtx{A}) - R_{\zeta}(\mtx{B}) %
	= R_{\zeta}(\mtx{A}) (\mtx{A} - \mtx{B}) R_{\zeta}(\mtx{B})
	= \Delta R_{\zeta}(\mtx{A}, \mtx{B})[\mtx{A} - \mtx{B}].
\]
Once again, the matrix difference $\Delta R_{\zeta}(\mtx{A}, \mtx{B})$ is a linear operator
on the direction $\mtx{H}$ that generalizes the scalar divided difference.
\end{example}

For the matrix difference operator $\Delta$,
some of the properties we have witnessed hold in general.
\begin{itemize}
\item	\textbf{Linearity.}  The function $\mtx{H} \mapsto \Delta f(\mtx{A}, \mtx{B})[ \mtx{H} ]$ is linear.

\item	\textbf{Differences.}  The difference of function values satisfies
\[
f(\mtx{A}) - f(\mtx{B}) = \Delta f(\mtx{A}, \mtx{B})[\mtx{A} - \mtx{B}].
\]

\item	\textbf{Derivatives.}  When the arguments of the matrix difference coalesce,
\[
\Delta f(\mtx{A}, \mtx{A})[\mtx{H}] = \Diff f(\mtx{A})[\mtx{H}].
\]
\end{itemize}
The matrix difference operator also satisfies a Leibniz rule for products
and a chain rule for composition.

\subsection{Matrix second differences}

By formal changes to \cref{def:nc-diff}, we can obtain second-order matrix differences.

\begin{definition}[Matrix second difference] \label{def:matrix-diff-2}
Fix a rational function $f \in \coll{R}$.  Consider square matrices
$\mtx{A}_0, \mtx{A}_1, \mtx{A}_2; \mtx{H}_1, \mtx{H}_2 \in \M_d$ of the same dimension,
and assume that $\spec(\mtx{A}_0) \cup \spec(\mtx{A}_1) \cup \spec(\mtx{A}_2)$
contains no pole of $f$.  The \term{second-order matrix difference}
of $f$ at the triple $(\mtx{A}_0, \mtx{A}_1, \mtx{A}_2)$ in the direction
$(\mtx{H}_1, \mtx{H}_2)$ is the matrix
\[
\Delta^2 f( \mtx{A}_0, \mtx{A}_1, \mtx{A}_2 )[ \mtx{H}_1 \otimes \mtx{H}_2 ]
	\coloneqq \northeast f\left( \begin{bmatrix}
	\mtx{A}_0 & \mtx{H}_1 & \\
	& \mtx{A}_1 & \mtx{H}_2  \\
	&& \mtx{A}_2
\end{bmatrix} \right).
\]
The matrix is block bidiagonal, and the unexpressed blocks are zero matrices.
The $\northeast$ map extracts the $(1, 3)$ block from the $3 \times 3$ block matrix. 
\end{definition}

We have written the direction argument $\mtx{H}_1 \otimes \mtx{H}_2$ as a tensor
product to emphasize the fact that the second-order matrix difference $\Delta^2 f(\mtx{A}_0, \mtx{A}_1, \mtx{A}_2)$
is a bilinear function of the argument $(\mtx{H}_1, \mtx{H}_2)$.  By linearity, the second-order matrix
difference extends to the linear space spanned by these tensors.

\begin{example}[Power function: Matrix second difference] \label{ex:power-second-diff}
Let us continue with \cref{ex:power-mtx-diff}.
Fix arbitrary matrices $\mtx{A}, \mtx{B}, \mtx{C}; \mtx{G}, \mtx{H} \in \M_d$.
For a natural number $p \geq 2$, by repeated multiplication,
\[
\begin{bmatrix} \mtx{A} & \mtx{G} & \\
& \mtx{B} & \mtx{H} \\
&& \mtx{C}
\end{bmatrix}^p
= \begin{bmatrix}
	\mtx{A}^p & \sum_{q+r = p-1} \mtx{A}^q \mtx{G} \mtx{B}^r & \sum_{q+r+s = p-2} \mtx{A}^q \mtx{G} \mtx{B}^r \mtx{H} \mtx{C}^s \\
	& \mtx{B}^p & \sum_{r+s = p - 1} \mtx{B}^r \mtx{H} \mtx{C}^s \\
	&& \mtx{C}^p
	\end{bmatrix}.
\]
The indices $q,r,s \in \Z_+$ range over nonnegative integers.	
\Cref{def:matrix-diff-2} delivers the second-order matrix difference
of the power function $f(a) \coloneqq a^p$:
\[
\Delta^2 f(\mtx{A}, \mtx{B}, \mtx{C})[ \mtx{G} \otimes \mtx{H} ]
	= \sum_{q+r+s=p-2} \mtx{A}^q \mtx{G} \mtx{B}^r \mtx{H} \mtx{C}^s.
\]
It is not a coincidence that the power of the block matrix also
displays the first-order matrix differences.
Specializing to the confluent case,
\[
\Delta^2 f(\mtx{A}, \mtx{A}, \mtx{A})[ \mtx{G} \otimes \mtx{H} ]
	= \sum_{q+r+s=p-2} \mtx{A}^q \mtx{G} \mtx{A}^r \mtx{H} \mtx{A}^s
	= \frac{1}{2} \Diff^2 f(\mtx{A})[ \mtx{G} \otimes \mtx{H} ].
\]
So the matrix second difference also contains the second directional derivative.
\end{example}

\begin{example}[Inverse powers: Matrix second difference] \label{ex:inv-power-second-diff}
For invertible $\mtx{A}, \mtx{B}, \mtx{C} \in \M_d$, we can compute the inverse of the block bidiagonal matrix via triangular substitution:
\[
\begin{bmatrix} \mtx{A} & \mtx{G} & \\
& \mtx{B} & \mtx{H} \\
&& \mtx{C}
\end{bmatrix}^{-1}
= \begin{bmatrix}
	\mtx{A}^{-1} & -\mtx{A}^{-1} \mtx{G} \mtx{B}^{-1} & \mtx{A}^{-1} \mtx{G} \mtx{B}^{-1} \mtx{H} \mtx{C}^{-1} \\
	& \mtx{B}^{-1} & -\mtx{B}^{-1} \mtx{H} \mtx{C}^{-1} \\
	&& \mtx{C}^{-1}
\end{bmatrix}
\]
This leads to a quick derivation of the matrix second
difference of the inverse function $f(a) \coloneqq a^{-1}$.
Indeed,
\[
\Delta^2 f(\mtx{A}, \mtx{B}, \mtx{C})[ \mtx{G} \otimes \mtx{H} ]
	= \mtx{A}^{-1} \mtx{G} \mtx{B}^{-1} \mtx{H} \mtx{C}^{-1}.
\]
In the confluent case, we obtain half the second derivative of
the inverse function.

From here, we can easily obtain the second differences of inverse powers
by repeated multiplication of the inverse with itself.  Indeed, for $p \in \N$,
\[
\begin{bmatrix} \mtx{A} & \mtx{G} & \\
& \mtx{B} & \mtx{H} \\
&& \mtx{C}
\end{bmatrix}^{-p}
= \begin{bmatrix}
	\mtx{A}^{-p} & -\sum_{q+r=p+1}^+ \mtx{A}^{-q} \mtx{G} \mtx{B}^{-r} & \sum_{q+r+s = p+2}^+ \mtx{A}^{-q} \mtx{G} \mtx{B}^{-r} \mtx{H} \mtx{C}^{-s} \\
	& \mtx{B}^{-p} & - \sum_{r+s=p+1}^+ \mtx{B}^{-r} \mtx{H} \mtx{C}^{-s} \\
	&& \mtx{C}^{-p}
\end{bmatrix}
\]
The superscript $+$ attached to the summation sign means that
the indices $q,r,s \in \N$ extend over the \hilite{natural numbers}.
Thus, for $g(a) \coloneqq a^{-p}$, we have
\[
\Delta^2 g(\mtx{A}, \mtx{B}, \mtx{C})[ \mtx{G} \otimes \mtx{H} ]
	= \sum_{q+r+s = p+2}^+ \mtx{A}^{-q} \mtx{G} \mtx{B}^{-r} \mtx{H} \mtx{C}^{-s}
\]
This calculation will play a role in the study of resolvent powers.
\end{example}

For the matrix second difference operator $\Delta^2$,
the following properties hold in general.
\begin{itemize}
\item	\textbf{Bilinearity.}  The map $(\mtx{H}_1, \mtx{H}_2) \mapsto \Delta^2 f(\mtx{A}_0, \mtx{A}_1, \mtx{A}_2)[\mtx{H}_1 \otimes \mtx{H}_2]$ is bilinear.

\item	\textbf{Differences.}  The difference of matrix differences satisfies
\begin{align*}
\Delta f(\mtx{A}, \mtx{B})[\mtx{H}] - \Delta f(\mtx{A}, \mtx{C})[\mtx{H}]
	&= \Delta^{2} f(\mtx{A}, \mtx{B}, \mtx{C})[ \mtx{H} \otimes  (\mtx{B}-\mtx{C}) ]; \\
\Delta f(\mtx{A}, \mtx{B})[\mtx{H}] - \Delta f(\mtx{B}, \mtx{B})[\mtx{H}]
	&= \Delta^{2} f(\mtx{A}, \mtx{B}, \mtx{B})[ (\mtx{A}-\mtx{B}) \otimes \mtx{H} ]. %
\end{align*}

\item	\textbf{Derivatives.}  When the arguments of the matrix difference coalesce,
\[
\Delta^2 f(\mtx{A}, \mtx{A}, \mtx{A}) = \frac{1}{2} \Diff^2 f(\mtx{A}).
\]
\end{itemize}
For further details and extensions, see the survey~\cite{KV14:Foundations-Free}
on free noncommutative function theory.

\section{Matrix consolidation inequalities}
\label{sec:mtx-solid}

The scalar universality arguments rely on numerical inequalities
for divided differences, such as \cref{lem:indep-sum-cvx-bd}.
In the matrix case, the situation is more complicated
because many plausible analogs of this lemma fail.
Nevertheless, we can establish satisfactory estimates
after some elementary arguments.

\subsection{Matrix consolidation}

We begin with a deterministic trace inequality
that allows us to combine some of the factors in a matrix product.

\begin{proposition}[Matrix consolidation] \label{prop:mtx-solid}
Consider \hilite{self-adjoint} matrices $\mtx{H}_j \in \Sym_d$ and \hilite{normal} matrices $\mtx{A}_j \in \M_d$ for $j =0,1,2$.
For all positive integers $q, r, s\in\Z_+$ with $p \coloneqq q + r + s$, it holds that
\[
\labs{\ntr\big[ \mtx{H}_0 \mtx{A}_0^{q} \mtx{H}_1 \mtx{A}_1^{r} \mtx{H}_2 \mtx{A}_2^{s} \big] }
	\leq \sum_{j,k,\ell=0}^2 \alpha_{jk\ell} \ntr\big[ \norm{ \mtx{H}_j } \,\mtx{H}_k^2 \cdot \abs{\mtx{A}_\ell}^p \big].
\]
The scalar coefficients $\alpha_{jk\ell}$ are nonnegative and sum to one; they depend on the exponents $q,r,s$. 
\end{proposition}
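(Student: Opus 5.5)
The plan is to separate the three normal factors by Hölder's inequality, bound each resulting factor with an Araki--Lieb--Thirring estimate, and then expand the product into a convex combination by weighted AM--GM.

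\emph{Step 1: Hölder.} Since each $\mtx{A}_j$ is normal, write $\mtx{A}_j = \mtx{U}_j \abs{\mtx{A}_j}$ with $\mtx{U}_j$ unitary and commuting with $\abs{\mtx{A}_j}$. Then
\[
\ntr\big[ \mtx{H}_0 \mtx{A}_0^{q} \mtx{H}_1 \mtx{A}_1^{r} \mtx{H}_2 \mtx{A}_2^{s} \big]
= \ntr\big[ (\mtx{H}_0 \abs{\mtx{A}_0}^q)\mtx{U}_0^q (\mtx{H}_1 \abs{\mtx{A}_1}^r)\mtx{U}_1^r (\mtx{H}_2 \abs{\mtx{A}_2}^s)\mtx{U}_2^s \big].
\]
Apply Hölder's inequality for the normalized Schatten norms with exponents $p/q$, $p/r$, $p/s$, whose reciprocals sum to one. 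The unitaries drop out, and we get
\[
\abs{\ntr[\cdots]} \leq \prod_{(j,m)} \norm{\mtx{H}_j \abs{\mtx{A}_j}^{m}}_{p/m},
\qquad (j,m) \in \{(0,q),(1,r),(2,s)\}.
\]

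\emph{Step 2: each factor.} We have
\[
\norm{\mtx{H}\abs{\mtx{A}}^m}_{p/m}^{p/m} = \ntr\big[(\abs{\mtx{A}}^m \mtx{H}^2 \abs{\mtx{A}}^m)^{p/(2m)}\big].
\]
When $m \leq p/2$, the Araki--Lieb--Thirring inequality with exponent $p/(2m) \geq 1$ gives
\[
\ntr\big[(\abs{\mtx{A}}^m \mtx{H}^2 \abs{\mtx{A}}^m)^{p/(2m)}\big] \leq \ntr\big[\abs{\mtx{A}}^{p/2}\abs{\mtx{H}}^{p/m}\abs{\mtx{A}}^{p/2}\big].
\]
Next, $\abs{\mtx{H}}^{p/m} \preccurlyeq \norm{\mtx{H}}^{p/m-2}\mtx{H}^2$ in the semidefinite order, so
\[
\norm{\mtx{H}\abs{\mtx{A}}^m}_{p/m} \leq \norm{\mtx{H}}^{1-2m/p}\, \big(\ntr[\mtx{H}^2\abs{\mtx{A}}^p]\big)^{m/p}.
\]

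\emph{Step 3: AM--GM.} Multiply the three bounds. The exponents of the spectral norms, $a_j = 1 - 2m_j/p \geq 0$, sum to $3-2=1$. The exponents of the traces $T_j \coloneqq \ntr[\mtx{H}_j^2\abs{\mtx{A}_j}^p]$, $b_j = m_j/p$, also sum to one. Weighted AM--GM gives
\[
\prod_j \norm{\mtx{H}_j}^{a_j} \prod_k T_k^{b_k} \leq \sum_{j,k} a_j b_k \norm{\mtx{H}_j} T_k,
\]
which has the claimed form with $\alpha_{jk\ell} = a_j b_k \delta_{k\ell}$. These coefficients are nonnegative, sum to one, and depend only on $q,r,s$.

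\emph{Main obstacle: an exponent exceeding $p/2$.} At most one of $q,r,s$ can exceed $p/2$; by cyclicity, say it is $q$. Then Step 2 fails, since $a_0$ would be negative, and the naive Hölder split produces a bare $\ntr\abs{\mtx{A}_0}^p$ with no $\mtx{H}$ attached. My first attempt is to rebalance the product before applying Hölder. Split $\abs{\mtx{A}_0}^q = \abs{\mtx{A}_0}^{q_1}\abs{\mtx{A}_0}^{q_2}$ and cycle the trace so the factors read
\[
(\abs{\mtx{A}_0}^{q_2}\mtx{H}_1\abs{\mtx{A}_1}^{r})\cdots(\abs{\mtx{A}_2}^{s}\mtx{H}_0\abs{\mtx{A}_0}^{q_1}).
\]
This uses two-sided blocks $\abs{\mtx{A}_a}^{\alpha}\mtx{H}\abs{\mtx{A}_b}^{\beta}$ with $\alpha+\beta \leq p/2$ in each Hölder factor. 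Each block would then be controlled by a two-sided Lieb--Thirring/Hölder estimate, for instance by Cauchy--Schwarz in the form
\[
\norm{\mtx{B}\mtx{H}\mtx{C}}_{x} \leq \norm{\mtx{B}^{2}\mtx{H}^2}^{1/2}\cdots,
\]
reducing to one-sided factors $\norm{\mtx{H}\abs{\mtx{A}}^{m}}$ with $m \leq p/2$, followed by Step 2.

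If that bookkeeping does not close, the fallback is cruder. Take the operator norm of $\mtx{H}_1$ or $\mtx{H}_2$, whichever sits next to the small exponents. Then apply Cauchy--Schwarz to the pair $\mtx{H}_0\abs{\mtx{A}_0}^{q/2}$ and its mate, accepting different but still convex weights $\alpha_{jk\ell}$, which the statement permits.
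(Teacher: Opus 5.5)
\textbf{Verdict: genuine gap in the case where one exponent exceeds $p/2$.}

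Your Steps 1--3 are correct in the balanced case $q,r,s\le p/2$; the H{\"o}lder, Araki--Lieb--Thirring, and weighted AM--GM steps all check out. The unbalanced case, where one exponent (say $q$) exceeds $p/2$, is the real content of the proposition, and your proposal does not prove it. The applications sum over all nonnegative $(q,r,s)$ with a fixed total, including $(p,0,0)$, so this case cannot be skipped.

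Your first attempt has the right block structure. The requirement that both outer blocks have total exponent at most $p/2$ actually forces $q_1=p/2-s$ and $q_2=p/2-r$. But you never pin down these values, and the estimates you name for a two-sided block $\abs{\mtx{A}_a}^{\alpha}\mtx{H}\abs{\mtx{A}_b}^{\beta}$ fail:
\begin{itemize}
\item Splitting the block into one-sided factors by H{\"o}lder leaves a bare power of $\abs{\mtx{A}_b}$ with no $\mtx{H}$ attached. That is the very defect you identified.
\item Cauchy--Schwarz in the form $\ntr[\mtx{H}\mtx{B}^2\mtx{H}\mtx{C}^2]\le\norm{\mtx{H}\mtx{B}^2}_2\norm{\mtx{H}\mtx{C}^2}_2$ produces $\ntr[\mtx{H}^2\abs{\mtx{A}_a}^{4\alpha}]$. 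This has the needed power $p$ only when $\alpha=\beta=p/4$. Equivalently, Step 2 bounds $\norm{\mtx{H}\abs{\mtx{A}}^m}_x$ only for $x=p/m$.
\item Your fallback split $q/2+q/2$, after removing $\norm{\mtx{H}_2}$, leads to $\ntr[\mtx{H}_1\abs{\mtx{A}_0}^{q}\mtx{H}_1\abs{\mtx{A}_1}^{2r}]$ and $\ntr[\mtx{H}_0\abs{\mtx{A}_2}^{2s}\mtx{H}_0\abs{\mtx{A}_0}^{q}]$. Their total powers $q+2r$ and $q+2s$ differ from $p$ unless $r=s$, so they cannot be converted into terms $\ntr[\mtx{H}_k^2\abs{\mtx{A}_\ell}^p]$.
\end{itemize}

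The missing idea is the uneven split $\abs{\mtx{A}_0}^q=\abs{\mtx{A}_0}^{p/2-s}\abs{\mtx{A}_0}^{p/2-r}$. Both exponents are nonnegative since $r+s<p/2$, and the split is chosen so that each Cauchy--Schwarz factor carries total power exactly $p$. Pulling out $\norm{\mtx{H}_2}$, the norm of the $\mtx{H}$ not adjacent to $\mtx{A}_0$, gives
\[
\abs{\ntr[\cdots]}\le\norm{\mtx{H}_2}\,\big(\ntr[\mtx{H}_1\abs{\mtx{A}_0}^{p-2r}\mtx{H}_1\abs{\mtx{A}_1}^{2r}]\big)^{1/2}\big(\ntr[\mtx{H}_0\abs{\mtx{A}_2}^{2s}\mtx{H}_0\abs{\mtx{A}_0}^{p-2s}]\big)^{1/2}.
\]
Each trace has the form $\ntr[\mtx{H}\mtx{P}^{\theta}\mtx{H}\mtx{Q}^{1-\theta}]$ with $\mtx{P},\mtx{Q}$ among the $\abs{\mtx{A}_\ell}^p$. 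Apply the matrix GM--AM inequality to each trace, then scalar AM--GM to the product. This yields a convex combination of terms $\norm{\mtx{H}_2}\ntr[\mtx{H}_k^2\abs{\mtx{A}_\ell}^p]$.

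If you prefer to stay with your own tools, this also works. Write the trace as $\norm{\mtx{P}^{\theta/2}\mtx{H}\mtx{Q}^{(1-\theta)/2}}_2^2$ and split $\mtx{H}=\abs{\mtx{H}}^{\theta}\sgn(\mtx{H})\abs{\mtx{H}}^{1-\theta}$. Then H{\"o}lder with exponents $2/\theta,\,2/(1-\theta)$ plus Araki--Lieb--Thirring gives the same bound.

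Cross terms with $k\neq\ell$, such as $\ntr[\mtx{H}_1^2\abs{\mtx{A}_0}^p]$, cannot be avoided here. Take $r=s=0$, $\mtx{A}_1=\mtx{A}_2=\mtx{0}$, positive semidefinite $\mtx{A}_0$, $\mtx{H}_0=\epsilon\Id$ and $\mtx{H}_1=\mtx{H}_2=\Id$ with $0<\epsilon<1$. The left side is $\epsilon\ntr\abs{\mtx{A}_0}^p$, while every diagonal term is at most $\epsilon^2\ntr\abs{\mtx{A}_0}^p$. So the pattern $\alpha_{jk\ell}=a_jb_k\delta_{k\ell}$ of your Step 3 does not extend.

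This is precisely the paper's proof. There the largest exponent is cycled into the middle slot and the argument is run in every case, since it only needs the other two exponents to be at most $p/2$. That makes your Steps 1--3 unnecessary.
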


The proof relies on a more basic trace inequality:

\begin{fact}[Matrix GM--AM inequality] \label{fact:matrix-gm-am}
Consider psd matrices $\mtx{A}, \mtx{B} \in \Sym_d$ and self-adjoint $\mtx{H} \in \Sym_d$.
For each $\theta \in [0,1]$,
\[
\labs{ \ntr\big[ \mtx{H} \mtx{A}^\theta \mtx{H} \mtx{B}^{1-\theta} \big] }
	\leq \theta \ntr[  \mtx{H}^2 \, \mtx{A} ] + (1-\theta) [ \mtx{H}^2 \,  \mtx{B} ].
\]
\end{fact}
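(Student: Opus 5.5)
The plan is to reduce the trace to a double sum over the two eigenbases and then apply the scalar weighted AM--GM inequality term by term. Everything rests on one observation: in these coordinates the trace is a nonnegative combination of the products $a_i^\theta b_j^{1-\theta}$.

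\textbf{Step 1 (spectral expansion).} Take spectral decompositions $\mtx{A} = \sum_i a_i \vct{u}_i \vct{u}_i^*$ and $\mtx{B} = \sum_j b_j \vct{v}_j \vct{v}_j^*$ with orthonormal bases $(\vct{u}_i)$ and $(\vct{v}_j)$ and eigenvalues $a_i, b_j \geq 0$. Then
\[
\ntr\big[ \mtx{H} \mtx{A}^\theta \mtx{H} \mtx{B}^{1-\theta} \big]
	= \frac{1}{d} \sum_{i,j} a_i^\theta \, b_j^{1-\theta} \, \vct{u}_i^* \mtx{H} \vct{v}_j \, \vct{v}_j^* \mtx{H} \vct{u}_i
	= \frac{1}{d} \sum_{i,j} a_i^\theta \, b_j^{1-\theta} \, \abs{ \vct{u}_i^* \mtx{H} \vct{v}_j }^2 .
\]
The second equality uses $\mtx{H} = \mtx{H}^*$, which gives $\vct{v}_j^* \mtx{H} \vct{u}_i = \overline{\vct{u}_i^* \mtx{H} \vct{v}_j}$. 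Every term is nonnegative, so the trace is real and nonnegative and the absolute value can be dropped. At the endpoints $\theta \in \{0,1\}$ we use the convention $\mtx{A}^0 = \Id$, that is, $a_i^0 = 1$ even when $a_i = 0$; the formula above remains valid with this convention.

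\textbf{Step 2 (scalar AM--GM).} For $a, b \geq 0$ and $\theta \in [0,1]$, the weighted AM--GM inequality gives $a^\theta b^{1-\theta} \leq \theta a + (1-\theta) b$. This also holds at the endpoints under the convention $0^0 = 1$. Apply it termwise; this is legitimate because each weight $\abs{\vct{u}_i^* \mtx{H} \vct{v}_j}^2$ is nonnegative.

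\textbf{Step 3 (resum).} Completeness of the bases lets us resum each piece back into a trace:
\[
\sum_{i,j} a_i \abs{\vct{u}_i^* \mtx{H} \vct{v}_j}^2
	= \sum_i a_i \, \vct{u}_i^* \mtx{H} \Big( \sum_j \vct{v}_j \vct{v}_j^* \Big) \mtx{H} \vct{u}_i
	= \sum_i a_i \, \vct{u}_i^* \mtx{H}^2 \vct{u}_i
	= d \cdot \ntr[ \mtx{H}^2 \mtx{A} ].
\]
Symmetrically, $\sum_{i,j} b_j \abs{\vct{u}_i^* \mtx{H} \vct{v}_j}^2 = d \cdot \ntr[\mtx{H}^2 \mtx{B}]$. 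Combining Steps 1--3 gives
\[
\labs{ \ntr\big[ \mtx{H} \mtx{A}^\theta \mtx{H} \mtx{B}^{1-\theta} \big] }
	\leq \theta \ntr[ \mtx{H}^2 \mtx{A} ] + (1-\theta) \ntr[ \mtx{H}^2 \mtx{B} ].
\]

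I do not expect a real obstacle. The only point needing care is Step 1: recognising that self-adjointness of $\mtx{H}$ turns the mixed terms into squared moduli. That positivity is what licenses the termwise use of AM--GM in Step 2.
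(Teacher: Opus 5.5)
Your proof is correct and follows the same route as the paper's sketch: expand in the spectral resolutions of $\mtx{A}$ and $\mtx{B}$, apply the scalar weighted GM--AM inequality to the eigenvalues, and reassemble the traces. You have supplied the details that sketch leaves implicit, namely the nonnegative weights $\abs{\vct{u}_i^* \mtx{H} \vct{v}_j}^2$ and the resummation by completeness, and you correctly read the second term as $(1-\theta)\ntr[\mtx{H}^2 \mtx{B}]$, supplying the trace that is missing from the statement.
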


\begin{proof}[Proof sketch]
Introduce the spectral resolutions of $\mtx{A}, \mtx{B}$.
Apply the scalar GM--AM inequality to the eigenvalues of $\mtx{A}, \mtx{B}$.
Reassemble the matrices.  For details, see~\cite[Fact~2.2]{Tro16:Expected-Norm}. %
\end{proof}

\begin{proof}[Proof of \cref{prop:mtx-solid}]
By cycling the trace, we may assume that $r$ is the maximum exponent.  The spectral norm bound for the trace yields
\begin{align*}
E &\coloneqq \labs{\ntr\big[ \mtx{H}_0 \mtx{A}_0^{q} \mtx{H}_1 \mtx{A}_1^{r} \mtx{H}_2 \mtx{A}_2^{s} \big] }
	\leq \norm{ \mtx{H}_0 } \cdot \ntr \labs{ \mtx{A}_0^{q} \mtx{H}_1 \mtx{A}_1^{r} \mtx{H}_2 \mtx{A}_2^{s} }.
\end{align*}
Since $r$ is the maximum exponent,
the other exponents $q, s \leq p/2$
and $r = (p/2 - q) + (p/2 - s)$.
Introducing the polar factorization of the normal matrix $\mtx{A}_1 = \mtx{U} \abs{\mtx{A}_1} = \abs{\mtx{A}_1} \mtx{U}$,
we can expand the $r$th power $\mtx{A}_1^r$ as
\begin{align*}
E &= \norm{ \mtx{H}_0 } \cdot \ntr \big\vert \mtx{A}_0^{q} \mtx{H}_1 \abs{\mtx{A}_1}^{p/2-q} \mtx{U}^r \cdot \abs{\mtx{A}_1}^{p/2-s} \mtx{H}_2 \mtx{A}_2^{s} \big\vert.
\end{align*}
Apply the Cauchy--Schwarz inequality for the Schatten 1-norm,
and cycle the resulting traces to consolidate powers of $q$ and $s$.
Thus,
\begin{align*}
E &\leq \norm{\mtx{H}_0} \cdot \left( \ntr\big[ \mtx{H}_1 \abs{\mtx{A}_0}^{2q} \mtx{H}_1 \abs{\mtx{A}_1}^{p-2q} \big] \right)^{1/2} \cdot
	\left( \ntr\big[ \mtx{H}_2 \abs{\mtx{A}_1}^{p-2s} \mtx{H}_2 \abs{\mtx{A}_2}^{2s}  \big] \right)^{1/2} \\
	&\leq
	\frac{1}{2} \norm{ \mtx{H}_0 } \cdot \left( \ntr\big[ \mtx{H}_1 \abs{\mtx{A}_0}^{2q} \mtx{H}_1 \abs{\mtx{A}_1}^{p - 2q} \big]
	+ \ntr\big[ \mtx{H}_2 \abs{\mtx{A}_1}^{p - 2s} \mtx{H}_2 \abs{\mtx{A}_2}^{2s} \big] \right).
\end{align*}
To arrive at the second line, we invoked the scalar GM--AM inequality.
Apply the matrix GM--AM inequality to each trace to arrive at
\begin{align*}
E \leq \frac{1}{2} \norm{ \mtx{H}_0 } \cdot \bigg( &\frac{2q}{p} \cdot \ntr\left[  \mtx{H}_1^2 \, \abs{ \mtx{A}_0 }^p \right]
	+ \frac{p - 2q}{p} \cdot \ntr\left[  \mtx{H}_1^2  \,\abs{ \mtx{A}_1 }^p  \right] \\
	&+ \frac{p - 2s}{p} \cdot \ntr\left[  \mtx{H}_2^2  \,\abs{ \mtx{A}_1 }^{p} \right]
	+ \frac{2s}{p} \cdot \ntr\left[ \mtx{H}_2^2  \,\abs{ \mtx{A}_2 }^p \right] \bigg).
\end{align*}
Last, observe that the scalar coefficients compose a convex combination.
\end{proof}

\begin{remark}[Consolidation]
One might hope to prove stronger inequalities of the form
\[
\labs{ \ntr\big[ \mtx{HA}^q \mtx{HA}^r \mtx{HA}^s \big] }
	\overset{?}{\leq} \ntr\big[ \abs{\mtx{H}}^3 \, \abs{\mtx{A}}^p \big].
\]
In general, this putative statement is false~\cite[Sec.~IV]{CL22:Trace-Inequality},
even when $\mtx{H}, \mtx{A}$ are positive semidefinite and $s = 0$.
\cref{prop:mtx-solid} offers a passable substitute.
We can obtain variants by using more sophisticated
trace inequalities.
\end{remark}

\subsection{Matrix consolidation with exchangeable arguments}

Next, we upgrade \cref{prop:mtx-solid} to a trace inequality
for exchangeable random matrices.  This result serves as a
replacement for \cref{lem:indep-sum-cvx-bd} when we treat
matrix differences of power functions.

\begin{proposition}[Matrix consolidation: Exchangeable case] \label{prop:mtx-solid-exch}
Construct an exchangeable triple $(\mtx{X}, \mtx{X}', \mtx{X}'')$ of random matrices.
Consider a matrix function $\mtx{Y} \coloneqq \mtx{Y}(\mtx{X}) \in \M_d$
that takes normal values,
and form exchangeable counterparts $\mtx{Y}' \coloneqq \mtx{Y}(\mtx{X}')$
and $\mtx{Y}'' \coloneqq \mtx{Y}(\mtx{X}'')$.
Consider matrix functions $\mtx{W}_j \coloneqq \mtx{W}_j(\mtx{X}, \mtx{X}', \mtx{X}'') \in \Sym_d$
taking self-adjoint values.
For all positive integers $q, r, s \in \Z_+$ with $p \coloneqq q + r + s$,
\[
\labs{ \Expect \ntr\big[ \mtx{W}_0\, (\mtx{Y})^q\, \mtx{W}_1\, (\mtx{Y}')^r\, \mtx{W}_2 \,(\mtx{Y}'')^s \big] }
	\leq \max\nolimits_{\pi, j, k} \Expect \ntr\big[ \Expect\big[ \norm{ \mtx{W}_j^\pi } \, (\mtx{W}_k^\pi)^2 \lcondbar \mtx{X} \big] \cdot \abs{\mtx{Y}}^p \big].
\]
The random matrix $\mtx{W}_j^\pi(\mtx{X}, \mtx{X}', \mtx{X}'')$ is obtained by permuting
the arguments of $\mtx{W}_j$ with $\pi$.
\end{proposition}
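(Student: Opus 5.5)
The plan is to apply the deterministic consolidation inequality (\cref{prop:mtx-solid}) pointwise, then take expectations and use exchangeability to move every power onto the unprimed matrix $\mtx{Y}$. This mirrors the scalar argument in \cref{lem:indep-sum-cvx-bd}.

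First, fix a realization of $(\mtx{X},\mtx{X}',\mtx{X}'')$ and apply \cref{prop:mtx-solid} with $\mtx{H}_j \coloneqq \mtx{W}_j$, which are self-adjoint, and with $\mtx{A}_0 \coloneqq \mtx{Y}$, $\mtx{A}_1 \coloneqq \mtx{Y}'$, $\mtx{A}_2 \coloneqq \mtx{Y}''$, which are normal. This gives
\[
\labs{\ntr\big[ \mtx{W}_0 \mtx{Y}^q \mtx{W}_1 (\mtx{Y}')^r \mtx{W}_2 (\mtx{Y}'')^s \big]}
\leq \sum_{j,k,\ell} \alpha_{jk\ell} \ntr\big[ \norm{\mtx{W}_j} \mtx{W}_k^2 \, \abs{\mtx{Y}^{(\ell)}}^p \big].
\]
The coefficients $\alpha_{jk\ell}$ are nonnegative, sum to one, and depend only on $(q,r,s)$, so they are deterministic. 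Taking expectations, I move the absolute value outside via $\abs{\Expect \cdot} \leq \Expect \abs{\cdot}$. Each trace on the right is nonnegative, because $\norm{\mtx{W}_j}\mtx{W}_k^2$ and $\abs{\mtx{Y}^{(\ell)}}^p$ are both psd. Hence the expectation is bounded by a convex combination of the quantities $\Expect \ntr[\norm{\mtx{W}_j}\mtx{W}_k^2 \abs{\mtx{Y}^{(\ell)}}^p]$, and so by their maximum over $(j,k,\ell)$.

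Next, for each term with $\ell \neq 0$, choose a permutation $\pi$ of the triple that sends position $\ell$ to position $0$. Exchangeability says $(\mtx{X},\mtx{X}',\mtx{X}'')$ and its permuted version have the same law. Applying that relabeling inside the expectation, the term becomes $\Expect \ntr[\norm{\mtx{W}_j^{\pi}} (\mtx{W}_k^{\pi})^2 \abs{\mtx{Y}}^p]$. Here $\mtx{Y}^{(\ell)}$ has been replaced by $\mtx{Y} = \mtx{Y}(\mtx{X})$, and the $\mtx{W}$ functions now have permuted arguments. Care is needed only in the direction of the permutation (using $\pi$ or $\pi^{-1}$), but the maximum ranges over all permutations, so this is harmless.

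Finally, I condition on $\mtx{X}$. Since $\abs{\mtx{Y}}^p$ is $\mtx{X}$-measurable, the tower property together with linearity of the trace gives
\[
\Expect \ntr\big[ \norm{\mtx{W}_j^\pi}(\mtx{W}_k^\pi)^2 \abs{\mtx{Y}}^p \big] = \Expect \ntr\big[ \Expect[\norm{\mtx{W}_j^\pi}(\mtx{W}_k^\pi)^2 \condbar \mtx{X}] \cdot \abs{\mtx{Y}}^p \big].
\]
Taking the maximum over $\pi, j, k$ yields the claim. I expect no step to be a real obstacle; the point needing most care is the exchangeability relabeling, in particular checking that the permuted expression is still of the stated form with $\mtx{W}_j^\pi$ and that normality of $\mtx{Y}$ is all \cref{prop:mtx-solid} requires. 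Integrability is assumed tacitly, as elsewhere in the paper.
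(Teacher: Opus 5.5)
Your proposal is correct and follows the paper's proof essentially step for step. You apply \cref{prop:mtx-solid} pointwise with $\mtx{H}_j = \mtx{W}_j$ and $\mtx{A}_\ell = \mtx{Y}^{(\ell)}$, bound the resulting convex combination by its largest term, use exchangeability to swap $\mtx{X}^{(\ell)} \leftrightarrow \mtx{X}$ (which permutes the arguments of the $\mtx{W}_j$), and condition on $\mtx{X}$, exactly as the paper does; your remarks on the nonnegativity of the traces and on the direction of the permutation are harmless extra details.
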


\Cref{prop:mtx-solid-exch} has a similar structure to the existing trace
inequality~\cite[Prop.~5.1]{BvH24:Universality-Sharp},
but it does not follow from that result.

\begin{proof}
We employ the notation $\mtx{Y}^{(\ell)}$ for the exchangeable counterparts, where $\ell$ counts the number of prime symbols.
The quantity of interest takes the form
\[
E \coloneqq \labs{ \Expect \ntr\big[ \mtx{W}_0 (\mtx{Y}^{(0)})^q \mtx{W}_1 (\mtx{Y}^{(1)})^r \mtx{W}_2 (\mtx{Y}^{(2)})^s \big] }.
\]
\Cref{prop:mtx-solid} immediately yields
\begin{align*}
E \leq \Expect \left[ \sum_{j,k,\ell} \alpha_{jk\ell} \ntr\big[ \norm{\mtx{W}_j} \, \mtx{W}_k^2 \cdot \abs{ \mtx{Y}^{(\ell)} }^p \big] \right] %
	\leq \max\nolimits_{j,k,\ell} \Expect \ntr \big[ \norm{ \mtx{W}_j } \, \mtx{W}_k^2 \cdot \abs{ \mtx{Y}^{(\ell)} }^p \big].
\end{align*}
We bounded the convex combination above by the maximum.
Invoke exchangeability to swap $\mtx{X}^{(\ell)} \leftrightarrow \mtx{X}$,
which replaces $\mtx{Y}^{(\ell)}$ with $\mtx{Y}$ and permutes the arguments of the $\mtx{W}_j$.
Thus,
\begin{align*}
E \leq \max\nolimits_{\pi, j,k } \Expect \ntr \big[ \norm{ \mtx{W}_j^\pi } \,(\mtx{W}_k^\pi)^2 \cdot \abs{ \mtx{Y} }^p \big]
	= \max\nolimits_{\pi,j,k}  \Expect \ntr \big[ \Expect\big[ \norm{ \mtx{W}_j^\pi } \,(\mtx{W}_k^\pi)^2 \lcondbar \mtx{X} \big] \cdot \abs{ \mtx{Y} }^p \big]
\end{align*}
We have conditioned on $\mtx{X}$ to average out the extra
randomness in $\mtx{X}', \mtx{X}''$.
\end{proof}

\section{Covariance identities: Matrix setting}
\label{sec:cov-matrix}

In this section, we begin the development of our universality
results for random matrices.  The first step is to reproduce
the covariance identities from \cref{sec:cov-scalar}.  The
arguments are essentially the same, but the notation is more
involved because of noncommutativity.

\subsection{Preliminaries}

The derivative of the trace of a spectral function has a particularly simple form.
Fix a rational function $f \in \coll{R}$, and
write $g \coloneqq \Diff f$ for the (scalar) derivative.  For
matrices $\mtx{A}, \mtx{H} \in \M_d$ where $\spec(\mtx{A})$
does not contain a pole of $f$, %
\begin{equation} \label{eqn:D-tr}
(\Diff \,\ntr f)(\mtx{A})[\mtx{H}] = \ntr[ \mtx{H} \cdot (\Diff f)(\mtx{A}) ]
	= \ntr[ \mtx{H} g(\mtx{A}) ]
\end{equation}
We will exploit the expression~\eqref{eqn:D-tr} to avoid taking
third-order matrix differences.

Let $\mtx{F} : \Sym_d \to \M_d$ be a Fr{\'e}chet-differentiable matrix function.
We introduce a bilinear form associated with the derivative.  For self-adjoint
$\mtx{A}, \mtx{H}_1, \mtx{H}_2 \in \Sym_d$, 
\begin{equation} \label{eqn:mtx-pairing}
\ip{ \Diff \mtx{F}(\mtx{A}) }{ \mtx{H}_1 \otimes \mtx{H}_2 }
	\coloneqq \ntr[ \mtx{H}_1 \cdot \Diff \mtx{F}(\mtx{A})[\mtx{H}_2] ].
\end{equation}
The pairing is linear in each of the two arguments.

\subsection{Gaussian IBP}

By conditioning, the Gaussian IBP rule (\cref{fact:gauss-ibp}) extends to multivariate Gaussian distributions
and, in particular, to Gaussian random matrices.

\begin{proposition}[Gaussian IBP] \label{prop:gauss-ibp-mtx}
Fix a Fr{\'e}chet differentiable function $\mtx{F} : \Sym_d \to \M_d$.
Let $\mtx{Z} \in \Sym_d$ be a Gaussian self-adjoint matrix.
Then
\[
\Expect \ntr[ (\mtx{Z} - \Expect \mtx{Z}) \mtx{F}(\mtx{Z}) ]
	= \ip{ \Expect \Diff \mtx{F}(\mtx{Z}) }{ \Varo[ \mtx{Z} ] }.
\]
The pairing %
is defined in~\eqref{eqn:mtx-pairing},
and the variance tensor $\Varo[\mtx{Z}]$ is defined in~\eqref{eqn:Varo}. 
\end{proposition}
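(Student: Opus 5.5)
We reduce the matrix statement to the scalar Gaussian IBP rule (\cref{fact:gauss-ibp}), or more precisely to its multivariate version, by expressing the Gaussian matrix in coordinates.

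First, fix a real-linear basis for $\Sym_d$ and regard $\mtx{Z} - \Expect \mtx{Z}$ as a centered Gaussian vector $\vct{g} = (g_1, \dots, g_m)$ with covariance $\mtx{C}$. Write $\mtx{Z} - \Expect\mtx{Z} = \sum_a g_a \mtx{E}_a$, where the $\mtx{E}_a$ are fixed self-adjoint matrices. Factor $\vct{g} = \mtx{C}^{1/2} \vct{\gamma}$ with $\vct{\gamma}$ a standard Gaussian vector. This lets us work with independent standard normal coordinates $\gamma_b$ and write $\mtx{Z} = \Expect \mtx{Z} + \sum_b \gamma_b \mtx{K}_b$ for deterministic self-adjoint $\mtx{K}_b$. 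In these terms, the variance tensor is $\Varo[\mtx{Z}] = \sum_b \mtx{K}_b \otimes \mtx{K}_b$.

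Next, compute the left-hand side coordinatewise:
\[
\Expect \ntr[(\mtx{Z} - \Expect\mtx{Z}) \mtx{F}(\mtx{Z})] = \sum_b \Expect\big[ \gamma_b \, \ntr[\mtx{K}_b \mtx{F}(\mtx{Z})] \big].
\]
For each $b$, condition on the other coordinates $(\gamma_c)_{c \neq b}$. Then apply the scalar Gaussian IBP rule (\cref{fact:gauss-ibp}) to the function $\gamma_b \mapsto \ntr[\mtx{K}_b \mtx{F}(\mtx{Z})]$, treating the real and imaginary parts separately if $\mtx{F}$ is complex valued. By the chain rule, its derivative is $\ntr[\mtx{K}_b \, \Diff\mtx{F}(\mtx{Z})[\mtx{K}_b]]$, since $\partial \mtx{Z}/\partial \gamma_b = \mtx{K}_b$. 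Taking expectations and summing over $b$ gives
\[
\sum_b \Expect \ntr[\mtx{K}_b \, \Diff\mtx{F}(\mtx{Z})[\mtx{K}_b]] = \sum_b \ip{\Expect \Diff\mtx{F}(\mtx{Z})}{\mtx{K}_b \otimes \mtx{K}_b}.
\]
By bilinearity of the pairing~\eqref{eqn:mtx-pairing}, this equals $\ip{\Expect\Diff\mtx{F}(\mtx{Z})}{\Varo[\mtx{Z}]}$.

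The main point needing care is the identification $\Varo[\mtx{Z}] = \sum_b \mtx{K}_b \otimes \mtx{K}_b$ as tensors. This follows because $\Expect[\gamma_b\gamma_c] = \delta_{bc}$ and the Kronecker product is bilinear. We also need the pairing to be well defined on the span of elementary tensors, which it is by linearity in each slot. The remaining details are technical: integrability of $\gamma_b \mtx{F}(\mtx{Z})$ and of the derivative, which we assume tacitly as the paper does, and the use of Fubini's theorem to justify conditioning. We also handle degenerate covariance, where some coordinates vanish; the factorization via $\mtx{C}^{1/2}$ covers this case automatically.
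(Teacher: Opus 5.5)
Your proposal is correct and is essentially the paper's proof. You write $\mtx{Z} = \Expect\mtx{Z} + \sum_b \gamma_b \mtx{K}_b$ with independent standard normal $\gamma_b$, identify $\Varo[\mtx{Z}] = \sum_b \mtx{K}_b \otimes \mtx{K}_b$, apply the scalar Gaussian IBP rule in each coordinate together with the chain rule, and reassemble via bilinearity of the pairing. Your extra details only make explicit what the paper leaves tacit: building the $\mtx{K}_b$ from $\mtx{C}^{1/2}$ (this gives a representation in law, which suffices because both sides depend only on the distribution of $\mtx{Z}$), handling complex-valued traces, and the degenerate covariance case.
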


\begin{proof}
We can express the Gaussian matrix $\mtx{Z}$ in terms of an independent family $(\gamma_1, \dots, \gamma_n)$
of standard normal variables:
\[
\mtx{Z} = \mtx{A}_0 + \sum_{i=1}^n \gamma_i \mtx{A}_i
\quad\text{for fixed $\mtx{A}_i \in \Sym_d$.}
\]
Observe that the variance tensor satisfies
\[
\Varo[\mtx{Z}] \coloneqq \Expect[ (\mtx{Z} - \Expect\mtx{Z}) \otimes (\mtx{Z} - \Expect\mtx{Z}) ] = \sum_{i=1}^n \mtx{A}_i \otimes \mtx{A}_i.
\]
By linearity of the derivative and the scalar Gaussian IBP rule (\cref{fact:gauss-ibp}),
\begin{align*}
\Expect \ntr[ (\mtx{Z} - \Expect\mtx{Z}) \mtx{F}(\mtx{Z}) ]
	&= \sum_{i=1}^n \Expect[ \gamma_i \cdot \ntr[ \mtx{A}_i \mtx{F}(\mtx{Z}) ] ]
	= \sum_{i=1}^n \Expect \ntr[ \mtx{A}_i \cdot \Diff \mtx{F}(\mtx{Z})[\mtx{A}_i] ] \\
	&= \sum_{i=1}^n \Expect \ip{ \Diff \mtx{F}(\mtx{Z}) }{ \mtx{A}_i \otimes \mtx{A}_i }
	= \ip{ \Expect \Diff \mtx{F}(\mtx{Z}) }{ \Varo[\mtx{Z}] }.
\end{align*}
We used the chain rule and the fact that $\Diff_{\gamma_i} \mtx{Z} = \mtx{A}_i$ to obtain the
directional derivative.  The last step follows from bilinearity of the pairing.
\end{proof}

\subsection{Discrete covariance identities}
\label{sec:mtx-cov-id}

In \cref{sec:indep-sum-exch}, we introduced the independent sum of scalar random variables,
and we designed an exchangeable counterpart.  This whole discussion extends to the
matrix setting without any essential change.  Let us summarize.

Consider an independent family $(\mtx{S}_1, \dots, \mtx{S}_n)$ of
random \hilite{self-adjoint} matrices taking values in $\Sym_d$.
Introduce the random matrix
\begin{equation} \label{eqn:indep-sum-mtx}
\mtx{X} \coloneqq \sum_{i=1}^n \mtx{S}_i \in \Sym_d. %
\end{equation}
Draw two independent copies $(\mtx{S}_1', \dots, \mtx{S}_n')$ and
$(\mtx{S}_1'', \dots, \mtx{S}_n'')$ of the summands
and an independent random variable $I \sim \uniform\{1, \dots, n\}$.
The exchangeable counterparts for $\mtx{X}$ take the form
\begin{equation} \label{eqn:indep-sum-mtx-exch}
\begin{aligned}
\mtx{X}' &\coloneqq \mtx{X} + (\mtx{S}_I' - \mtx{S}_I) = \mtx{S}_I' + \sum_{j \neq I} \mtx{S}_j; \\
\mtx{X}'' &\coloneqq \mtx{X} + (\mtx{S}_I'' - \mtx{S}_I) = \mtx{S}_I'' + \sum_{j \neq I} \mtx{S}_j.
\end{aligned}
\end{equation}
As before, we use the same value of $I$ in both constructions.

The exchangeable pair $(\mtx{X}, \mtx{X}')$ satisfies the linear regression condition:
\[
\Expect[ \mtx{X} - \mtx{X}' \condbar \mtx{X} ] = n^{-1} (\mtx{X} - \Expect \mtx{X}).
\]
From here, we can obtain covariance identities in the spirit of
\cref{prop:discrete-ibp-sum}.
We can compute the ``covariance'' between the random matrix
and a function $\mtx{F} : \Sym_d \to \M_d$ of the random matrix:
\begin{equation} \label{eqn:mtx-cov-id}
\Expect \ntr[ (\mtx{X} - \Expect \mtx{X}) \mtx{F}(\mtx{X}) ]
	= \frac{n}{2} \Expect \ntr[ (\mtx{X} - \mtx{X}') (\mtx{F}(\mtx{X}) - \mtx{F}(\mtx{X}')) ].
\end{equation}
Similarly, we can express the variance tensor as
\begin{equation} \label{eqn:var-tensor-exch}
\Varo[ \mtx{X} ] \coloneqq \Expect[ (\mtx{X} - \Expect \mtx{X}) \otimes (\mtx{X} - \Expect \mtx{X}) ]
	= \frac{n}{2} \Expect [ (\mtx{X} - \mtx{X}') \otimes (\mtx{X} - \mtx{X}') ].
\end{equation}
The proofs of~\eqref{eqn:var-tensor-exch} and~\eqref{eqn:mtx-cov-id}
are identical in structure to the proof of \cref{prop:discrete-ibp-sum}.

\subsection{Discrete IBP}

Next, we establish a matrix IBP rule for an independent sum
that expresses the matrix covariance %
in terms of derivatives and divided differences.
This result and its proof are analogous with the discrete IBP 
rule for scalars (\cref{thm:discrete-ibp-sum}).
To streamline future applications, we make a slight generalization
to allow affine functions of the independent sum.

\begin{theorem}[Discrete IBP: Matrix series] \label{thm:discrete-ibp-mtx}
As in \cref{sec:mtx-cov-id}, consider the exchangeable triple $(\mtx{X}, \mtx{X}', \mtx{X}'')$
induced by an independent sum of $n$ random \hilite{self-adjoint} matrices taking values in $\Sym_d$.
Consider an affine function
\[
\mtx{Y} \coloneqq \mtx{Y}(\mtx{X}) \coloneqq \alpha \mtx{X} + \mtx{A}
\quad\text{with $\alpha \in \R$ and $\mtx{A} \in \Sym_d$}
\]
with exchangeable counterparts $\mtx{Y}' \coloneqq \mtx{Y}(\mtx{X}')$ and $\mtx{Y}'' \coloneqq \mtx{Y}(\mtx{X}'')$.
For each rational function $f \in \coll{R}$ with no real poles,
\begin{align*}
\Expect\ntr[ (\mtx{X} - \Expect \mtx{X}) f(\mtx{Y}) ]
	&= \alpha \cdot \ip{ \Expect \Diff f(\mtx{Y}) }{ \Varo[\mtx{X}] } \\
	&\quad+ \frac{\alpha^2 n}{2} \Expect \ntr[ (\mtx{X} - \mtx{X}') \cdot \Delta^2 f(\mtx{Y}, \mtx{Y}', \mtx{Y}'')[ (\mtx{X} - \mtx{X}') \otimes (\mtx{X}' - \mtx{X}'') ] ] \\
	&\quad+ \frac{\alpha^2 n}{2} \Expect \ntr[ (\mtx{X} - \mtx{X}') \cdot \Delta^2 f(\mtx{Y}, \mtx{Y}'', \mtx{Y}'')[ (\mtx{X} - \mtx{X}'') \otimes (\mtx{X} - \mtx{X}') ] ].
\end{align*}
\end{theorem}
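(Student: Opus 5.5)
We mirror the proof of the scalar discrete IBP rule (\cref{thm:discrete-ibp-sum}), replacing divided differences by matrix differences. The first step is to apply the matrix covariance identity~\eqref{eqn:mtx-cov-id} to the function $\mtx{F}(\mtx{X}) \coloneqq f(\alpha\mtx{X} + \mtx{A})$. Since $\mtx{Y} - \mtx{Y}' = \alpha(\mtx{X} - \mtx{X}')$, the difference property of the matrix difference gives
\[
\Expect\ntr[ (\mtx{X} - \Expect \mtx{X}) f(\mtx{Y}) ]
	= \frac{\alpha n}{2} \Expect \ntr\big[ (\mtx{X} - \mtx{X}') \cdot \Delta f(\mtx{Y}, \mtx{Y}')[\mtx{X} - \mtx{X}'] \big].
\]
Because $f$ has no real poles and all arguments are self-adjoint, every matrix difference is well defined.

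Next, we add and subtract the confluent term $\Delta f(\mtx{Y}'', \mtx{Y}'')[\mtx{X} - \mtx{X}'] = \Diff f(\mtx{Y}'')[\mtx{X} - \mtx{X}']$. For the main term, note that $\mtx{X}''$ is independent of $\mtx{X} - \mtx{X}' = \mtx{S}_I - \mtx{S}_I'$, even after conditioning on $I$. Indeed, given $I$, the matrix $\mtx{X}''$ depends only on $\mtx{S}_I''$ and $(\mtx{S}_j)_{j\neq I}$. Hence the expectation factors through the bilinear pairing~\eqref{eqn:mtx-pairing}:
\[
\frac{\alpha n}{2}\Expect\ip{\Diff f(\mtx{Y}'')}{(\mtx{X}-\mtx{X}')\otimes(\mtx{X}-\mtx{X}')}
	= \alpha \ip{\Expect \Diff f(\mtx{Y}'')}{\tfrac{n}{2}\Expect[(\mtx{X}-\mtx{X}')\otimes(\mtx{X}-\mtx{X}')]}.
\]
We justify this by conditioning on $I$, factoring the expectation, and then averaging. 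Some care is needed here, because the law of $\mtx{S}_I - \mtx{S}_I'$ depends on $I$ and $\mtx{X}''$ is independent of it only conditionally. The clean route is to observe that $\mtx{Y}''$ given $I$ has the same law as $\mtx{Y}$ for every $I$, so $\Expect[\Diff f(\mtx{Y}'') \condbar I] = \Expect \Diff f(\mtx{Y})$ does not depend on $I$. With that in hand, the variance tensor identity~\eqref{eqn:var-tensor-exch} converts the main term into $\alpha\ip{\Expect\Diff f(\mtx{Y})}{\Varo[\mtx{X}]}$.

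For the remainder, we decompose the difference in two steps, inserting $\Delta f(\mtx{Y}, \mtx{Y}'')[\mtx{H}]$ and then $\Delta f(\mtx{Y}'',\mtx{Y}'')[\mtx{H}]$, with $\mtx{H} = \mtx{X}-\mtx{X}'$. The two difference rules for $\Delta^2$ give
\begin{align*}
\Delta f(\mtx{Y},\mtx{Y}')[\mtx{H}] - \Delta f(\mtx{Y},\mtx{Y}'')[\mtx{H}] &= \alpha\,\Delta^2 f(\mtx{Y},\mtx{Y}',\mtx{Y}'')[\mtx{H}\otimes(\mtx{X}'-\mtx{X}'')],\\
\Delta f(\mtx{Y},\mtx{Y}'')[\mtx{H}] - \Delta f(\mtx{Y}'',\mtx{Y}'')[\mtx{H}] &= \alpha\,\Delta^2 f(\mtx{Y},\mtx{Y}'',\mtx{Y}'')[(\mtx{X}-\mtx{X}'')\otimes\mtx{H}].
\end{align*}
Multiplying by $\tfrac{\alpha n}{2}(\mtx{X}-\mtx{X}')$ and taking the trace and expectation yields exactly the two $\alpha^2 n/2$ terms in the statement. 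No exchange of variables is needed, unlike the scalar case, since the statement keeps the terms in this raw form.

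The main obstacle is the factorization step for the main term, namely verifying conditional independence with the shared index $I$ and identifying the variance tensor correctly in the pairing, including the ordering of the tensor factors. Since $(\mtx{X}-\mtx{X}')\otimes(\mtx{X}-\mtx{X}')$ is symmetric, the ordering is harmless. The second difference rule must also be checked against \cref{def:matrix-diff-2}, specifically which slot receives the direction $\mtx{A}-\mtx{B}$. This can be confirmed by writing the $3\times3$ block matrix, or on the power-function example.
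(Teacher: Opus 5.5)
Your proposal is correct and follows the paper's proof step for step: apply the covariance identity~\eqref{eqn:mtx-cov-id}, add and subtract $\Diff f(\mtx{Y}'')$, factor the main term through the pairing and the variance-tensor identity~\eqref{eqn:var-tensor-exch}, then insert $\Delta f(\mtx{Y},\mtx{Y}'')$ and apply the two second-difference rules, with the tensor slots placed correctly. Your conditioning on $I$ is valid, but your remark that $\mtx{X}''$ is independent of $\mtx{S}_I - \mtx{S}_I'$ ``only conditionally'' understates what you prove: because the conditional law of $\mtx{X}''$ given $I$ does not depend on $I$, the two are independent unconditionally, which is exactly the fact the paper invokes directly.
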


\begin{proof}
We begin with the matrix covariance identity~\eqref{eqn:mtx-cov-id},
and we express the difference of functions using a matrix difference
operator:
\begin{align*}
E \coloneqq \Expect \ntr[ (\mtx{X} - \Expect\mtx{X}) f(\mtx{Y}) ]
	&= \frac{n}{2} \Expect \ntr[ (\mtx{X} - \mtx{X}') (f(\mtx{Y}) - f(\mtx{Y}')) ] \\
	&= \frac{\alpha n}{2} \Expect \ntr[ (\mtx{X} - \mtx{X}') \cdot \Delta f(\mtx{Y}, \mtx{Y}')[ \mtx{X} - \mtx{X}'] \big].
\end{align*}
Indeed, the difference $\mtx{Y} - \mtx{Y}' = \alpha (\mtx{X} - \mtx{X}')$.
To the matrix difference, add and subtract $\Delta f(\mtx{Y}'', \mtx{Y}'') = \Diff f(\mtx{Y}'')$ to reach
\begin{align*}
E &= \frac{\alpha n}{2} \Expect \ntr[ (\mtx{X} - \mtx{X}') \cdot \Diff f(\mtx{Y}'')[\mtx{X} - \mtx{X}'] ] \\
	&+ \frac{\alpha n}{2} \Expect \ntr\big[ (\mtx{X} - \mtx{X}') \cdot \big( \Delta f(\mtx{Y},\mtx{Y}') - \Delta f(\mtx{Y}'', \mtx{Y}'') \big)[\mtx{X} - \mtx{X}'] \big]
	\eqqcolon \onecirc + \twocirc.
\end{align*}
Let us treat the two summands in turn.

Write the first summand in terms of the pairing:
\[
\onecirc %
	= \frac{\alpha n}{2} \Expect \ip{ \Diff f(\mtx{Y}'') }{ (\mtx{X} - \mtx{X}') \otimes (\mtx{X} - \mtx{X}') }.
\]
Since the random matrix $\mtx{Y}'' = \alpha \mtx{X}'' + \mtx{A}$ is independent from $\mtx{X} - \mtx{X}'$, we can compute the expectation of the two arguments of the pairing independently:
\begin{align*}
\onecirc %
	= \alpha \cdot \lip{ \Expect \Diff f(\mtx{Y}'') }{ \frac{n}{2} \Expect[ (\mtx{X} - \mtx{X}') \otimes (\mtx{X} - \mtx{X}') ] }
	= \alpha \cdot \ip{ \Expect \Diff f(\mtx{Y}) }{ \Varo[ \mtx{X} ] }.
\end{align*}
In the first argument of the pairing, we have made the exchange $\mtx{Y}'' \leftrightarrow \mtx{Y}$.
For the second argument, we exploit the relation~\eqref{eqn:var-tensor-exch} for the variance
tensor.

For the second summand, we reduce the first differences to second differences.
To that end, add and subtract $\Delta f(\mtx{Y}, \mtx{Y}'')$ to reach
\begin{align*}
\twocirc
	&=\frac{\alpha n}{2} \Expect \ntr\big[ (\mtx{X} - \mtx{X}') \cdot \big( \Delta f(\mtx{Y},\mtx{Y}') - \Delta f(\mtx{Y}, \mtx{Y}'') + \Delta f(\mtx{Y}, \mtx{Y}'') - \Delta f(\mtx{Y}'', \mtx{Y}'') \big)[\mtx{X} - \mtx{X}'] \big] \\
	&= \frac{\alpha^2 n}{2} \Expect \ntr[ (\mtx{X} - \mtx{X}') \cdot \Delta^2 f(\mtx{Y}, \mtx{Y}', \mtx{Y}'')[ (\mtx{X} - \mtx{X}') \otimes (\mtx{X}' - \mtx{X}'') ] ]\\
	&+ \frac{\alpha^2 n}{2} \Expect \ntr[ (\mtx{X} - \mtx{X}') \cdot \Delta^2 f(\mtx{Y}, \mtx{Y}'', \mtx{Y}'')[ (\mtx{X} - \mtx{X}'') \otimes (\mtx{X} - \mtx{X}') ] ].
\end{align*}
Combine the expressions for $\onecirc$ and $\twocirc$ to obtain the
stated formula for $E$.
\end{proof}

\section{Interpolation: Matrix setting}
\label{sec:interp-matrix}

In this section, we execute the basic interpolation arguments
in the matrix setting.  There is no conceptual difference with
the scalar case (\cref{prop:interp-sum}).

\subsection{Interpolation}

Consider a random self-adjoint matrix $\mtx{X} \in \Sym_d$.
Independent from $\mtx{X}$, construct a Gaussian self-adjoint matrix
$\mtx{Z} \in \Sym_d$ with the same expectation and variance tensor:
\begin{equation} \label{eqn:gauss-proxy-pf}
\Expect[ \mtx{Z} ] = \Expect[ \mtx{X} ] %
\quad\text{and}\quad
\Varo[\mtx{Z}] = \Varo[\mtx{X}]. %
\end{equation}
Construct a family of interpolants:
\begin{equation} \label{eqn:mtx-interp}
\mtx{Y}_t \coloneqq \mtx{Y}_t(\mtx{X}, \mtx{Z}) \coloneqq
	\Expect[\mtx{X}] + \sqrt{t} \cdot (\mtx{X} - \Expect\mtx{X}) + \sqrt{1 - t} \cdot (\mtx{Z}-\Expect\mtx{Z})
	\quad\text{for all $t \in [0, 1]$.}
\end{equation}
Observe that $\mtx{Y}_1 = \mtx{X}$ and $\mtx{Y}_0 = \mtx{Z}$.
Furthermore, the expectation and variance tensor remain constant:
\[
\Expect[ \mtx{Y}_t ] = \Expect[ \mtx{X} ]
\quad\text{and}\quad
\Varo[ \mtx{Y}_t ] = \Varo[\mtx{X}]
\quad\text{for all $t \in [0,1]$.}
\]
We intend to bound derivatives of functions along the interpolation path.

\begin{proposition}[Matrix interpolation] \label{prop:mtx-interp}
As above, introduce random self-adjoint matrices $\mtx{X}, \mtx{Z} \in \Sym_d$
along with the interpolants $\mtx{Y}_t$ from~\eqref{eqn:mtx-interp}.
Consider a rational function $h \in \coll{R}$ with no poles on the
real line, and write $f \coloneqq \Diff h$.  Define the interpolants
\[
u(t) \coloneqq \Expect \ntr h(\mtx{Y}_t)
\quad\text{for all $t \in [0,1]$.}
\]
For $t \in (0,1)$, the time derivative $\dot{u}(t)$ takes the form
\begin{equation} \label{eqn:mtx-mom-interp}
\dot{u}(t) = \frac{1}{2\sqrt{t}} \Expect \ntr[ (\mtx{X}-\Expect \mtx{X}) f(\mtx{Y}_t(\mtx{X}, \mtx{Z})) ]
	- \frac{1}{2\sqrt{1 - t}} \Expect \ntr[ (\mtx{Z}-\Expect \mtx{Z}) f(\mtx{Y}_t(\mtx{X}, \mtx{Z})) ].
\end{equation}
\end{proposition}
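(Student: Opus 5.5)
The plan follows the scalar argument of \cref{prop:interp}. The only new ingredient is the derivative of a trace function, which is supplied by~\eqref{eqn:D-tr}.

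First I will check that $u$ is differentiable on $(0,1)$ and that differentiation commutes with expectation. Since $h \in \coll{R}$ has no real poles and $\mtx{Y}_t$ is self-adjoint, the map $t \mapsto \ntr h(\mtx{Y}_t)$ is smooth for every realization. The resolvent-type factors in $h$ and $f = \Diff h$ stay bounded on the real line. The polynomial part has growth controlled by powers of $\norm{\mtx{X}} + \norm{\mtx{Z}}$. Fix a compact subinterval $[t_0, t_1] \subset (0,1)$. On it, $\abs{\ddx{t}\sqrt{t}}$ and $\abs{\ddx{t}\sqrt{1-t}}$ are bounded, so the $t$-derivative is dominated by an integrable envelope. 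Here I use the paper's standing assumption that the relevant moments are finite. Dominated convergence then gives $\dot{u}(t) = \Expect \ddx{t} \ntr h(\mtx{Y}_t)$.

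Next I compute the pathwise derivative with the chain rule. We have
\[
\ddx{t} \mtx{Y}_t = \frac{\mtx{X} - \Expect \mtx{X}}{2\sqrt{t}} - \frac{\mtx{Z} - \Expect\mtx{Z}}{2\sqrt{1-t}}.
\]
By~\eqref{eqn:D-tr} applied to $h$ with $\Diff h = f$, the derivative of $\ntr h$ at $\mtx{Y}_t$ in direction $\mtx{H}$ is $\ntr[\mtx{H} f(\mtx{Y}_t)]$. This holds because the trace of the first-order matrix difference only sees the scalar derivative, by cyclicity. Substituting $\mtx{H} = \ddx{t}\mtx{Y}_t$ and using linearity of the trace and of expectation splits the expression into the two terms of~\eqref{eqn:mtx-mom-interp}.

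The only real obstacle is the integrability justification in the first step, and it is routine under the tacit finiteness assumptions. To be careful, I would write $h$ as a polynomial plus a finite sum of terms $c(\zeta - a)^{-k}$ with $\zeta \notin \R$. Then I would bound each piece and its derivative separately: each resolvent satisfies $\norm{R_\zeta(\mtx{A})} \leq \abs{\Im\zeta}^{-1}$, and each monomial is bounded by a power of the norm.
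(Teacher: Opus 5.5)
Your proposal is correct and takes the same route as the paper: dominated convergence to pass the derivative through the expectation, then the chain rule combined with the trace-derivative formula~\eqref{eqn:D-tr}. Your partial-fraction decomposition and the resolvent bound $\norm{R_\zeta(\mtx{A})} \leq \abs{\Im\zeta}^{-1}$ supply the integrability envelope that the paper leaves to its standing finiteness assumptions.
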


\begin{proof}
Use dominated convergence to pass the derivative through the expectation.
Apply the formula~\eqref{eqn:D-tr} for the derivative of a trace function,
along with the chain rule.
\end{proof}

\subsection{Interpolation: Matrix sum}

\Cref{prop:mtx-interp} shows that the derivative $\dot{u}$ of the moment
interpolant contains two covariance terms.  For an independent sum of
random matrices, we can use our integration by parts rules to compare
these two quantities.

\begin{proposition}[Interpolation: Matrix sum] \label{prop:mtx-interp-sum}
As in \cref{thm:discrete-ibp-mtx},
consider the exchangeable triple $(\mtx{X}, \mtx{X}', \mtx{X}'')$
induced by an independent sum of random self-adjoint matrices with $n$ terms.
Construct an independent Gaussian matrix $\mtx{Z} \sim \normal(\Expect[\mtx{X}], \Varo[\mtx{X}])$. %

Let $h \in \coll{R}$ be a rational function with no poles on the real line,
and define the derivative $f \coloneqq \Diff h$.
For $t \in [0,1]$, introduce the interpolants
$u(t) \coloneqq \Expect \ntr h(\mtx{Y}_t)$ where $\mtx{Y}_t$ is
defined in~\eqref{eqn:mtx-interp}.  For $t \in (0,1)$, the derivative $\dot{u}(t)$
satisfies
\begin{align*}
\dot{u}(t) &= \frac{n\sqrt{t}}{4} \Expect \ntr\big[(\mtx{X} - \mtx{X}') \cdot \Delta^2 f(\mtx{Y}_t, \mtx{Y}_t', \mtx{Y}_t'')[ (\mtx{X} - \mtx{X}') \otimes (\mtx{X}' - \mtx{X}'') ] \big]\\
	&+ \frac{n\sqrt{t}}{4} \Expect \ntr\big[(\mtx{X} - \mtx{X}') \cdot \Delta^2 f(\mtx{Y}_t, \mtx{Y}_t'', \mtx{Y}_t'')[ (\mtx{X} - \mtx{X}'') \otimes (\mtx{X} - \mtx{X}') ] \big]
	\eqqcolon \onecirc + \twocirc.
\end{align*} 
The exchangeable counterparts $\mtx{Y}_t' \coloneqq \mtx{Y}_t(\mtx{X}', \mtx{Z})$
and $\mtx{Y}_t'' \coloneqq \mtx{Y}_t(\mtx{X}'', \mtx{Z})$.
\end{proposition}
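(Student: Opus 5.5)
We follow the scalar argument of \cref{prop:interp-sum} essentially verbatim, with the matrix tools in place of the scalar ones. The starting point is \cref{prop:mtx-interp}, which writes $\dot{u}(t)$ as the difference of two covariance-type terms: the $\mtx{X}$-covariance divided by $2\sqrt{t}$ and the $\mtx{Z}$-covariance divided by $2\sqrt{1-t}$. Both terms are evaluated at $f(\mtx{Y}_t(\mtx{X},\mtx{Z}))$. The plan is to rewrite each term by an integration by parts rule. Each rule produces the same leading term $\tfrac12 \ip{\Expect \Diff f(\mtx{Y}_t)}{\Varo[\mtx{X}]}$, so these cancel, and only the second-difference remainders survive.

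\textbf{Gaussian term.} Condition on $\mtx{X}$, which is independent of $\mtx{Z}$. Apply \cref{prop:gauss-ibp-mtx} to the function $\mtx{F}(\mtx{z}) \coloneqq f(\mtx{Y}_t(\mtx{X},\mtx{z}))$. By the chain rule, $\Diff\mtx{F}(\mtx{z})[\mtx{H}] = \sqrt{1-t}\,\Diff f(\mtx{Y}_t)[\mtx{H}]$. Since $\Varo[\mtx{Z}] = \Varo[\mtx{X}]$, dividing by $2\sqrt{1-t}$ and taking the outer expectation leaves $\tfrac12 \ip{\Expect\Diff f(\mtx{Y}_t)}{\Varo[\mtx{X}]}$.

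\textbf{Sum term.} Condition on $\mtx{Z}$, which is independent of the triple $(\mtx{X},\mtx{X}',\mtx{X}'')$ and of the index $I$. Apply \cref{thm:discrete-ibp-mtx} with the affine map $\mtx{Y}(\mtx{X}) = \sqrt{t}\,\mtx{X} + \mtx{A}$, taking $\alpha = \sqrt{t}$ and $\mtx{A} = (1-\sqrt{t})\Expect\mtx{X} + \sqrt{1-t}(\mtx{Z}-\Expect\mtx{Z})$. Conditionally on $\mtx{Z}$, the matrix $\mtx{A}$ is a fixed self-adjoint matrix. The counterparts $\mtx{Y}(\mtx{X}')$ and $\mtx{Y}(\mtx{X}'')$ coincide with $\mtx{Y}_t'$ and $\mtx{Y}_t''$, because the same $\mtx{Z}$ is used. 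The theorem gives
\begin{itemize}
\item the main term $\sqrt{t}\,\ip{\Expect\Diff f(\mtx{Y}_t)}{\Varo[\mtx{X}]}$, which becomes $\tfrac12\ip{\cdot}{\cdot}$ after dividing by $2\sqrt{t}$;
\item two remainders with prefactor $\alpha^2 n/2 = tn/2$, which become $n\sqrt{t}/4$ after dividing by $2\sqrt{t}$.
\end{itemize}
These are exactly the terms $\onecirc$ and $\twocirc$ in the statement. Taking the outer expectation over $\mtx{Z}$ and subtracting the Gaussian term cancels the variance pairings, which yields the claim.

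\textbf{Points needing care.} The rational function $f = \Diff h$ has no real poles because $h$ has none, so all matrix differences at self-adjoint arguments are well defined. Conditioning is legitimate because of the independence just noted, and the tacit integrability assumptions justify the interchanges of expectation.

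The main obstacle is bookkeeping rather than substance. We must check that conditioning on $\mtx{Z}$ preserves exchangeability of $(\mtx{Y}_t,\mtx{Y}_t',\mtx{Y}_t'')$, which holds since these are images of an exchangeable triple under one map that is fixed given $\mtx{Z}$. We must also check that the pairing in \cref{thm:discrete-ibp-mtx}, obtained after the swap $\mtx{Y}''\leftrightarrow\mtx{Y}$, is evaluated at the same $\mtx{Y}_t$ that appears in the Gaussian term. Both hold conditionally on $\mtx{Z}$, so the cancellation is exact.
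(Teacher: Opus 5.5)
Your proposal is correct and matches the paper's proof. Both start from \cref{prop:mtx-interp}, apply the Gaussian IBP rule to the $\mtx{Z}$-term and \cref{thm:discrete-ibp-mtx} with $\alpha = \sqrt{t}$ to the $\mtx{X}$-term, and then cancel the two variance pairings. Your explicit conditioning on $\mtx{X}$ (resp.\ $\mtx{Z}$) and your identification $\mtx{A} = (1-\sqrt{t})\Expect\mtx{X} + \sqrt{1-t}\,(\mtx{Z}-\Expect\mtx{Z})$ just spell out details the paper leaves implicit.
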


\begin{proof}
\Cref{prop:mtx-interp} provides a formula~\eqref{eqn:mtx-mom-interp} for the derivative $\dot{u}$
along the interpolation path. We use the integration by parts rules to control the two
expectations.  The Gaussian IBP rule (\cref{prop:gauss-ibp-mtx}) with $\mtx{F}(\mtx{Z}) \gets f(\mtx{Y}_t(\mtx{X}, \mtx{Z}))$
yields
\[
\frac{1}{2\sqrt{1- t}} \Expect \ntr[ (\mtx{Z}-\Expect\mtx{Z}) f(\mtx{Y}_t(\mtx{X}, \mtx{Z})) ]
	= \frac{1}{2} \ip{ \Expect \Diff f( \mtx{Y}_t(\mtx{X}, \mtx{Z}) ) }{ \Varo[ \mtx{X} ] }.
\]
We have employed the chain rule and the hypothesis that $\Varo[ \mtx{X} ] = \Varo[ \mtx{Z} ]$.
Meanwhile, the discrete IBP rule (\cref{thm:discrete-ibp-mtx}) with inputs %
$\mtx{Y}(\mtx{X}) \gets \mtx{Y}_t(\mtx{X}, \mtx{Z})$ and $\alpha \gets \sqrt{t}$
provides that
\begin{align*}
\frac{1}{2\sqrt{t}} \Expect \ntr[ & (\mtx{X} - \Expect \mtx{X}) f(\mtx{Y}_t(\mtx{X}, \mtx{Z})) ] \\
	&= 	\frac{1}{2} \ip{ \Expect \Diff f( \mtx{Y}_t(\mtx{X}, \mtx{Z}) ) }{ \Varo[ \mtx{X} ] } \\
	&+ \frac{n\sqrt{t}}{4} \Expect \ntr\big[(\mtx{X} - \mtx{X}') \cdot \Delta^2 f(\mtx{Y}_t, \mtx{Y}_t', \mtx{Y}_t'')[ (\mtx{X} - \mtx{X}') \otimes (\mtx{X}' - \mtx{X}'') ] \big] \\
	&+ \frac{n\sqrt{t}}{4} \Expect \ntr\big[(\mtx{X} - \mtx{X}') \cdot \Delta^2 f(\mtx{Y}_t, \mtx{Y}_t'', \mtx{Y}_t'')[ (\mtx{X} - \mtx{X}'') \otimes (\mtx{X} - \mtx{X}') ] \big].
\end{align*}
Introduce the two displays into~\eqref{eqn:mtx-mom-interp} to see that the derivative
terms cancel.  
\end{proof}

\section{Universality for the Cauchy transform}
\label{sec:cauchy-xform}

We are now prepared to develop universality results for
specific trace functions.  This section treats universality
for the Cauchy transform, and the upcoming sections deal
with monomial moments (\cref{sec:mono-mom})
and the norms of the resolvents (\cref{sec:resolvent-norm}).

\subsection{Random matrix statistics}

Recall the model~\eqref{eqn:indep-sum-mtx} for the independent
sum $\mtx{X}$ of random self-adjoint matrices and the description~\eqref{eqn:gauss-proxy-pf}
of the Gaussian proxy $\mtx{Z}$.  The comparison of these models naturally leads
to statements that are expressed in terms of more detailed statistics.
Let us collect the definitions here for future reference.

First, we introduce the sum of third moments:
\begin{equation} \label{eqn:M3-mtx}
M_3(\mtx{X}) \coloneqq \sum_{i=1}^n \Expect \ntr \abs{\mtx{S}_i - \mtx{S}_i'}^3,
\end{equation}
where $(\mtx{S}_1', \dots, \mtx{S}_n')$ is an independent copy of
the sequence $(\mtx{S}_1, \dots, \mtx{S}_n)$ of summands.
For each natural number $p \in \N$,
define parameterized versions of the matrix variance and the uniform bound parameter:
\begin{equation} \label{eqn:mvar-p}
\sigma_{2p}^2(\mtx{X}) \coloneqq \lnorm{ \Expect \big[ (\mtx{S}_i - \Expect \mtx{S}_i)^2 \big] }_{p}
\quad\text{and}\quad
L_{2p}(\mtx{X}) \coloneqq \norm{ \max\nolimits_i \norm{\mtx{S}_i - \mtx{S}_i'} }_{2p}.
\end{equation}
We also allow the value $p = \infty$.
By monotonicity of $\set{L}_p$ norms and the triangle inequality, we have the bounds $\sigma_{2p}(\mtx{X}) \leq \sigma(\mtx{X})$ and $L_{2p}(\mtx{X}) \leq 2L(\mtx{X})$.
The statistics in this paragraph are typically easy to calculate using linear algebra arguments.

We will also prove more precise results,
stated in terms of another second moment statistic:
\begin{equation} \label{eqn:M2p-mtx}
M_{2,p}(\mtx{X}) \coloneqq \lnorm{ \sum_{i=1}^n (\mtx{S}_i - \mtx{S}_i')^2 }_p.
\end{equation}
The matrix Rosenthal inequality (\cref{fact:rosenthal-mtx}, positive case) allows
us to control $M_{2,p}$ in terms of more accessible statistics.
For each natural number $p \in \N$,
\begin{equation} \label{eqn:M2p-ros}
M_{2, p}(\mtx{X}) \leq 4 \sigma_{2p}^2(\mtx{X}) + 4p L_{2p}(\mtx{X}).
\end{equation}
The statement~\eqref{eqn:M2p-ros} follows from a short calculation, akin to \cref{cor:scalar-poly}.

\subsection{Universality for the Cauchy transform}

With this preparation behind us, we can state the main result of this section.
We will establish that the Cauchy transform of an independent sum %
of random matrices is consistent with the Cauchy transform of the Gaussian proxy. %

\begin{theorem}[Cauchy transform: Universality] \label{thm:univ-cauchy}
Let $\mtx{X}$ be an independent sum~\eqref{eqn:indep-sum-mtx} of random self-adjoint matrices,
and construct the matching Gaussian matrix $\mtx{Z}$ as in~\eqref{eqn:gauss-proxy-pf}.
For each complex number $\zeta \in \C \setminus \R$, the Cauchy transform $G_{\zeta}$
satisfies the comparison
\[
\abs{ G_\zeta( \mtx{X} ) - G_\zeta( \mtx{Z}) }
	\leq \frac{{M}_3(\mtx{X})}{\abs{\Im \zeta}^4}.
\]
The Cauchy transform $G_{\zeta}$ is defined in~\eqref{eqn:cauchy-xform},
and the statistic $M_3(\mtx{X})$ is defined in~\eqref{eqn:M3-mtx}.
\end{theorem}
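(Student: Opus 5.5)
We will apply the matrix interpolation machinery of \cref{prop:mtx-interp-sum} to a rational function whose derivative is the resolvent. Fix $\zeta \in \C \setminus \R$ and take $h(a) \coloneqq -\log(\zeta - a)$. This is not rational, so instead we work directly with $f(a) \coloneqq R_\zeta(a) = (\zeta - a)^{-1}$, a rational function with no real poles. We regard it as the derivative of $h$, which is all that is needed in \cref{prop:mtx-interp}: the derivative of $\Expect \ntr h(\mtx{Y}_t)$ is computed via~\eqref{eqn:D-tr}, and this only involves $f$. Equivalently, we simply differentiate $\Expect \ntr[\mtx{H} f(\mtx{Y}_t)]$ with a branch of the logarithm chosen on the half-plane containing $\zeta - \spec$. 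If that is awkward, we instead set $u(t) \coloneqq \Expect\ntr R_\zeta(\mtx{Y}_t)$ and use $h = R_\zeta$, $f = \Diff R_\zeta = R_\zeta^2$. Then $\Delta^2 f$ is a third difference of $R_\zeta$, which contributes four resolvent factors and gives the $\abs{\Im\zeta}^{-4}$ rate directly. We adopt this second choice, which avoids logarithms. In either case $u(1) = G_\zeta(\mtx{X})$ and $u(0) = G_\zeta(\mtx{Z})$, so it suffices to show $\abs{\dot u(t)} \leq \tfrac{3}{2}\sqrt{t}\, M_3/\abs{\Im\zeta}^4$ and integrate.

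The next step is to compute $\Delta^2 f$ for $f = R_\zeta^2$. By \cref{ex:inv-power-second-diff} with $p = 2$, applied to $\zeta - \mtx{A}$, and tracking signs, $\Delta^2 f(\mtx{A},\mtx{B},\mtx{C})[\mtx{G}\otimes\mtx{H}]$ is a sum of three terms of the form $R^{q}(\mtx{A})\,\mtx{G}\,R^{r}(\mtx{B})\,\mtx{H}\,R^{s}(\mtx{C})$ with $q+r+s = 4$ and $q,r,s \geq 1$. Every resolvent of a self-adjoint argument satisfies $\norm{R_\zeta} \leq \abs{\Im\zeta}^{-1}$. Hence each term in $\onecirc$ and $\twocirc$ has the form $\ntr[\mtx{H}_0 \cdot (\text{4 resolvent factors with } \mtx{H}_1, \mtx{H}_2 \text{ interleaved})]$, where the $\mtx{H}_j$ are differences such as $\mtx{X}-\mtx{X}'$ and $\mtx{X}'-\mtx{X}''$. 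Hölder's inequality for Schatten norms bounds each such trace by $\abs{\Im\zeta}^{-4}\,\norm{\mtx{H}_0}_3\norm{\mtx{H}_1}_3\norm{\mtx{H}_2}_3$. Young's inequality then bounds this by $\tfrac13\sum_j \ntr\abs{\mtx{H}_j}^3$.

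Each difference equals $\mtx{S}_I - \mtx{S}_I'$ or an exchangeable analogue of it. So $n\,\Expect \ntr\abs{\mtx{H}_j}^3 = \sum_i \Expect\ntr\abs{\mtx{S}_i-\mtx{S}_i'}^3 = M_3(\mtx{X})$, exactly as in \cref{lem:W-bd}. Combining the pieces gives $\abs{\onecirc}+\abs{\twocirc} \leq 2\cdot\tfrac{\sqrt t}{4}\cdot 3\cdot M_3\abs{\Im\zeta}^{-4}$, and $\int_0^1\sqrt t\,\diff t = 2/3$ yields a constant of order one. To reach the stated constant $1$, we would use the first (logarithm) choice. There $f = R_\zeta$ and $\Delta^2 f$ carries only three resolvents, giving $\abs{\Im\zeta}^{-3}$, which does not match the stated power. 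The second choice is therefore the correct one. The constant $1$ should follow if the three terms of $\Delta^2 R_\zeta^2$ are counted carefully: $\tfrac{\sqrt t}{4}\cdot 2\cdot 3 \cdot \tfrac23 = 1$.

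The main obstacle is the bookkeeping. We must verify the exact form and number of terms in $\Delta^2(R_\zeta^2)$, and justify the Hölder step for products of non-self-adjoint resolvents interleaved with self-adjoint differences. The justification is the Schatten Hölder inequality with the spectral-norm bound on each resolvent. We also need to confirm that the difference directions appearing in \cref{prop:mtx-interp-sum} all reduce by exchangeability to $\mtx{S}_I - \mtx{S}_I'$. Differentiation under the expectation is harmless, since all resolvents are uniformly bounded.
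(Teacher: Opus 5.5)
Your final argument is correct and is essentially the paper's proof. You take $h = R_\zeta$ and $f = R_\zeta^2$, expand $\Delta^2 f$ into the three terms with $q+r+s=4$, bound each term by H\"older with $\abs{\Im\zeta}^{-4}$ from the resolvents, and reduce every difference to $\mtx{S}_I - \mtx{S}_I'$ by exchangeability, which gives $\abs{\dot u(t)} \leq \tfrac{3}{2}\sqrt{t}\, M_3(\mtx{X})\abs{\Im\zeta}^{-4}$ and hence the constant $1$ after integration. The only difference is cosmetic: you use pointwise Schatten--H\"older plus Young, where the paper uses H\"older for the expected trace and notes that the three $\set{L}_3$ norms coincide, and the logarithm detour at the start can simply be deleted.
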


\Cref{thm:univ-cauchy} coincides with a result alluded to
in~\cite[Rem.~6.13]{BvH24:Universality-Sharp}.
We can derive the statement of \cref{thm:cauchy-xform} by making
the simple bound $M_3(\mtx{X}) \leq 4 \sigma^2(\mtx{X}) L(\mtx{X})$.

\subsection{Proof of \cref{thm:univ-cauchy}}

Fix a complex number $\zeta \in \C \setminus \R$ with nonzero imaginary part,
and let $a \in \R$ be a real variable.
The Cauchy transform $G_{\zeta}$ is the trace function associated with
the resolvent function $R_\zeta(a) \coloneqq (\zeta - a)^{-1}$.
Note that the resolvent function is rational, and it has no
pole on the real line.  Its derivative is
$f(a) \coloneqq \Diff R_\zeta(a) = R_\zeta(a)^2$.

From \cref{ex:inv-power-second-diff}, we easily
obtain the second matrix difference of $f = R_\zeta^2$.  For self-adjoint matrices
$\mtx{A}_0, \mtx{A}_1, \mtx{A}_2; \mtx{H}_1, \mtx{H}_2 \in \Sym_d$,
\begin{align} 
\Delta^2 f(\mtx{A}_0, \mtx{A}_1, \mtx{A}_2)[ \mtx{H}_1 \otimes \mtx{H}_2 ]
	&= \sum_{q+r+s = 4}^+ (\zeta - \mtx{A}_0)^{-q} \mtx{H}_1 (\zeta - \mtx{A}_1)^{-r} \mtx{H}_2 (\zeta - \mtx{A}_2)^{-s} \notag \\
	&= \sum_{q+r+s = 4}^+ R_{\zeta}(\mtx{A}_0)^q \,\mtx{H}_1\, R_{\zeta}(\mtx{A}_1)^r\, \mtx{H}_2 \, R_{\zeta}(\mtx{A}_2)^s. \label{eqn:resolvent2-2diff}
\end{align}
The indices in the sum are \hilite{natural numbers}, so they range as
$(q,r,s) \in \{ (2,1,1); (1,2,1); (1,1,2) \}$.

We may assume that
the independent sum $\mtx{X}$ and the Gaussian proxy $\mtx{Z}$ are independent.
Define the interpolating matrix $\mtx{Y}_t$ as in~\eqref{eqn:mtx-interp},
and construct the interpolant for the Cauchy transform: %
\[
u(t) \coloneqq G_\zeta(\mtx{Y}_t) = \Expect \ntr R_\zeta(\mtx{Y}_t)
\quad\text{for $t \in [0,1]$.}
\]
Introduce the exchangeable counterparts $\mtx{X}', \mtx{X}''$ and $\mtx{Y}_t', \mtx{Y}_t''$ as in~\cref{thm:discrete-ibp-mtx}. %
\Cref{prop:mtx-interp-sum} furnishes a formula for the derivative $\dot{u}(t)$
along the interpolation path.

Consider the first of the two terms in $\dot{u}(t)$, as the second is entirely analogous.
The expression~\eqref{eqn:resolvent2-2diff} for the second matrix difference yields
\begin{align*}
\onecirc &\coloneqq \frac{n\sqrt{t}}{4} \Expect \ntr \big[ (\mtx{X} - \mtx{X}') \cdot \Delta^2 f(\mtx{Y}_t, \mtx{Y}_t', \mtx{Y}_t'')[ (\mtx{X} - \mtx{X}') \otimes (\mtx{X}' - \mtx{X}'') ] \big] \\
	&= \frac{n\sqrt{t}}{4} \sum_{q+r+s=4}^+ \Expect \ntr\big[ (\mtx{X} - \mtx{X}') \cdot R_{\zeta}(\mtx{Y}_t)^q \, (\mtx{X} - \mtx{X}') \, R_{\zeta}(\mtx{Y}_t')^r \, (\mtx{X}' - \mtx{X}'') \, R_{\zeta}(\mtx{Y}_t'')^{s} \big]. %
\end{align*}
Each of the resolvents is controlled uniformly:
\[
\norm{ R_{\zeta}(\mtx{A})^{p} }
\leq \abs{ \Im \zeta }^{-p}
\quad\text{for $\mtx{A} \in \Sym_d$ and $p \in \N$.}
\]
Bound each of the three summands in $\onecirc$ using H{\"o}lder's inequality for the expected trace with exponents
$(1/3, 0, 1/3, 0, 1/3, 0)$.  We reach
\[
\abs{\onecirc} \leq \frac{3\sqrt{t}}{4 \abs{\Im \zeta}^4} \cdot n \norm{ \mtx{X} - \mtx{X}' }_3 \norm{ \mtx{X} - \mtx{X}' }_3 \norm{ \mtx{X}' - \mtx{X}'' }_3
	= \frac{3 M_3(\mtx{X}) \sqrt{t}}{4 \abs{\Im \zeta}^4}.
\]
By exchangeability, each of the $\set{L}_3$ norms has the same value.  For example,
\[
\norm{ \mtx{X} - \mtx{X}' }_3
	\coloneqq \left( \Expect \ntr \abs{ \mtx{X} - \mtx{X}' }^3 \right)^{1/3}
	= \left( \frac{1}{n} \sum_{i=1}^n \Expect \ntr \abs{ \mtx{S}_i - \mtx{S}_i' }^3 \right)^{1/3}
	= \left(n^{-1} {M}_3(\mtx{X}) \right)^{1/3}.
\]
This is just a matter of definitions.

In summary, the derivative along the interpolation path satisfies the bound
\[
\abs{\dot{u}(t)} \leq \abs{ \onecirc } + \abs{ \twocirc } \leq \frac{3 {M}_3(\mtx{X}) \sqrt{t}}{2 \abs{\Im \zeta}^4}.
\]
It remains to integrate along the interpolation path.  By the fundamental theorem of calculus,
\[
\abs{ G_\zeta(\mtx{X}) - G_\zeta(\mtx{Z}) }
	= \abs{ u(1) - u(0) }
	\leq \int_0^1 \abs{\dot{u}(t)} \idiff{t}
	\leq \frac{3 {M}_3(\mtx{X})}{2 \abs{\Im \zeta}^4} \int_0^1 \sqrt{t}\idiff{t}
	= \frac{{M}_3(\mtx{X})}{\abs{\Im \zeta}^4}.
\]
This is the required result. 
\hfill\qed

\section{Universality for monomial moments}
\label{sec:mono-mom}

This section establishes a universality result for the even-order
monomial moments of an independent sum.  The statement and
argument parallel the scalar result (\cref{thm:scalar-poly}).

\begin{theorem}[Monomial moments: Universality] \label{thm:mom-univ}
Let $\mtx{X}$ be an independent sum~\eqref{eqn:indep-sum-mtx} of random self-adjoint matrices,
and construct the matching Gaussian matrix $\mtx{Z}$ as in~\eqref{eqn:gauss-proxy-pf}.
For each natural number $p \in \N$, the moments of order $2p$ satisfy the comparisons
\begin{equation} \label{eqn:mom-univ-cube}
\labs{ \norm{ \mtx{X} }_{2p} - \norm{ \mtx{Z} }_{2p} }^3
	\leq \big\vert \norm{ \mtx{X} }_{2p}^3 - \norm{ \mtx{Z} }_{2p}^3 \big\vert
	\leq (p-1) (2p-1) \cdot {M}_{2,p}(\mtx{X})  {L}_{2p}(\mtx{X}).
\end{equation}
In particular, the relative error admits the bound
\begin{equation} \label{eqn:mom-univ-rel}
\frac{\big\vert \norm{ \mtx{X} }_{2p} - \norm{ \mtx{Z} }_{2p} \big\vert}{\norm{\mtx{Z}}_{2p}}
	\leq \frac{(p-1)(2p-1) \cdot {M}_{2,p}(\mtx{X}) {L}_{2p}(\mtx{X})}{\norm{\mtx{Z}}_{2p}^3}.
\end{equation}
The statistics $L_{2p}$ and $M_{2,p}$ are defined
in~\eqref{eqn:mvar-p} and~\eqref{eqn:M2p-mtx}.
\end{theorem}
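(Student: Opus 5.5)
We will follow the scalar argument of \cref{thm:scalar-poly}, replacing \cref{lem:indep-sum-cvx-bd} by the matrix consolidation inequality (\cref{prop:mtx-solid-exch}). The case $p = 1$ is trivial because $\mtx{X}$ and $\mtx{Z}$ share the expectation and the variance tensor, so the second moments agree. For $p \geq 2$, take $h(a) \coloneqq a^{2p}$, which is a polynomial, so $f \coloneqq \Diff h = 2p\, a^{2p-1}$. Define $u(t) \coloneqq \Expect \ntr \mtx{Y}_t^{2p} = \norm{\mtx{Y}_t}_{2p}^{2p}$ with $\mtx{Y}_t$ as in~\eqref{eqn:mtx-interp}, taking $\mtx{Z}$ independent of $\mtx{X}$. \Cref{prop:mtx-interp-sum} expresses $\dot{u}(t)$ as the sum $\onecirc + \twocirc$. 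By \cref{ex:power-second-diff}, each term expands into a sum over $q + r + s = 2p-3$ ($q,r,s \in \Z_+$) of expected traces of the form
\[
\Expect \ntr\big[ \mtx{W}_0\, \mtx{Y}_t^{q}\, \mtx{W}_1\, (\mtx{Y}_t')^{r}\, \mtx{W}_2\, (\mtx{Y}_t'')^{s} \big].
\]
Here the $\mtx{W}_j$ are the three self-adjoint differences $\mtx{X}-\mtx{X}'$, $\mtx{X}'-\mtx{X}''$, $\mtx{X}-\mtx{X}''$, in the appropriate order. The second term has the pattern $(\mtx{Y}_t,\mtx{Y}_t'',\mtx{Y}_t'')$, which fits the same template after relabeling. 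The number of summands is $\binom{2p-1}{2} = (p-1)(2p-1)$. This count explains the constant in~\eqref{eqn:mom-univ-cube}.

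Next, we apply \cref{prop:mtx-solid-exch} to the exchangeable triple $(\mtx{X},\mtx{X}',\mtx{X}'')$, conditionally on $\mtx{Z}$, with the normal (indeed self-adjoint) function $\mtx{Y}_t$. Each summand is then bounded by
\[
\max_{\pi,j,k} \Expect \ntr\big[ \Expect[ \norm{\mtx{W}_j^\pi}\, (\mtx{W}_k^\pi)^2 \condbar \mtx{X}, \mtx{Z}]\, \abs{\mtx{Y}_t}^{2p-3} \big].
\]
This step is delicate: the exponent must be $2p-3$, and the conditional expectation must be taken at $\mtx{X}$. The remaining factors are of the form $n\,\norm{\mtx{S}_I^{(a)} - \mtx{S}_I^{(b)}} \, (\mtx{S}_I^{(c)}-\mtx{S}_I^{(e)})^2$, and we bound $\norm{\cdot} \leq \max_i \norm{\mtx{S}_i - \mtx{S}_i'}$. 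Averaging over $I$ then leaves the positive-semidefinite quantity
\[
\Big(\max\nolimits_i \norm{\cdot}\Big) \cdot \sum_i (\mtx{S}_i^{(c)} - \mtx{S}_i^{(e)})^2 \preccurlyeq \mtx{Q}.
\]
Some care is needed when the pair $(c,e)$ involves the $\mtx{X}$-variables, which are not averaged out. Since $\abs{\mtx{Y}_t}^{2p-3} \succcurlyeq \mtx{0}$, we can pass $\mtx{Q}$ through the trace monotonically.

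We then apply H\"older's inequality for the expected trace with exponents $2p$, $p$, and $2p/(2p-3)$, since $1/(2p) + 1/p + (2p-3)/(2p) = 1$. This gives
\[
\abs{\dot u(t)} \leq \frac{\sqrt{t}}{4} \cdot 2p\,(p-1)(2p-1) \cdot L_{2p}(\mtx{X})\, M_{2,p}(\mtx{X}) \cdot u(t)^{1 - 3/(2p)}.
\]
The $L_{2p}$ norm of the maximum and the $\set{L}_p$ norm of $\sum_i(\cdot)^2$ come out correctly because, after exchangeability, each copy has the law of the $(\mtx{S}_i,\mtx{S}_i')$ family. The main obstacle is this H\"older and conditioning step. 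The factors are random and not jointly a function of $\mtx{X}$, so the conditional Jensen step must be placed before H\"older. I would first try to write the bound as $\Expect \ntr[\Phi \cdot \abs{\mtx{Y}_t}^{2p-3}]$ with $\Phi$ depending on all copies, then apply H\"older directly with scalar $\norm{\cdot}$ and matrix $\Sigma$-factors, using exchangeability of the copies to identify the norms.

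Finally, we solve the differential inequality as in the scalar proof. Set $\phi \coloneqq u^{3/(2p)}$. Then
\[
\abs{\dot\phi} \leq \tfrac{3}{2p}\, \abs{\dot u}\, u^{3/(2p)-1} \leq \tfrac{3}{4}(p-1)(2p-1)\, M_{2,p} L_{2p} \sqrt{t},
\]
and integrating $\sqrt{t}$ over $[0,1]$ yields $(p-1)(2p-1) M_{2,p}L_{2p}/2$. This is within the stated bound, and the factor of $2$ absorbs any slack from bounding the two terms $\onecirc$ and $\twocirc$. Since $u(1) = \norm{\mtx{X}}_{2p}^{2p}$ and $u(0) = \norm{\mtx{Z}}_{2p}^{2p}$, we have $\phi = \norm{\cdot}_{2p}^3$ at the endpoints. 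The numerical inequalities~\eqref{eqn:cube-ineqs} then give~\eqref{eqn:mom-univ-cube} and~\eqref{eqn:mom-univ-rel}.
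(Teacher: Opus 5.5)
Your proposal is correct and follows the paper's proof: the same interpolation through \cref{prop:mtx-interp-sum}, the same $\binom{2p-1}{2}$-term expansion of $\Delta^2 f$, the consolidation step via \cref{prop:mtx-solid-exch}, and the integration of $u^{3/(2p)}$. Your plan of undoing the conditioning, averaging over $I$, dominating by $\max_i \norm{\cdot} \cdot \sum_i (\cdot)^2$, and applying one H\"older inequality with exponents $(2p, p, 2p/(2p-3))$ is a slightly more direct version of the paper's two-step H\"older argument combined with \cref{lem:triple-product-bd}; the only correction is that your displayed bound on $\abs{\dot u(t)}$ is really the bound for each of $\onecirc$ and $\twocirc$ separately, so the factor $2$ is not slack but is consumed exactly, and the argument yields precisely $(p-1)(2p-1)\, M_{2,p}(\mtx{X}) L_{2p}(\mtx{X})$.
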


From here, we can obtain a result that depends on the matrix variance
statistic by invoking the matrix Rosenthal inequality (\cref{fact:rosenthal-mtx}).

\begin{corollary}[Monomial moments: Universality] \label{cor:mom-univ}
Instate the assumptions of \cref{thm:mom-univ}.  Then
\[
\labs{ \norm{ \mtx{X} }_{2p} - \norm{ \mtx{Z} }_{2p} }
	\leq \big( 8p^2 \sigma_{2p}^2(\mtx{X}) L_{2p}(\mtx{X}) \big)^{1/3} + 8p L_{2p}(\mtx{X}).
\]
Furthermore, when $p L_{2p}^2(\mtx{X}) \leq \sigma_{2p}^2(\mtx{X})$,
\[
\big\vert \norm{ \mtx{X} }_{2p} - \norm{ \mtx{Z} }_{2p} \big\vert
	\leq 16 p^2 L_{2p}(\mtx{X}). %
\]
The statistics $\sigma_{2p}$ and $L_{2p}$ are defined in \eqref{eqn:mvar-p}.
\end{corollary}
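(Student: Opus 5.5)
The plan is to derive \cref{cor:mom-univ} directly from the cubic comparison~\eqref{eqn:mom-univ-cube} in \cref{thm:mom-univ}. We control $M_{2,p}(\mtx{X})$ with the matrix Rosenthal bound~\eqref{eqn:M2p-ros}, and for the second statement we switch to the relative-error form~\eqref{eqn:mom-univ-rel} together with a lower bound on $\norm{\mtx{Z}}_{2p}$. The case $p=1$ is trivial, since the second moments agree and the right-hand side of~\eqref{eqn:mom-univ-cube} vanishes. So assume $p \geq 2$ and write $\sigma \coloneqq \sigma_{2p}(\mtx{X})$ and $L \coloneqq L_{2p}(\mtx{X})$.

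For the first bound, I would insert~\eqref{eqn:M2p-ros}, read as $M_{2,p} \leq 4\sigma^2 + 4pL^2$ (the displayed $4pL_{2p}$ must carry a square for dimensional consistency), into~\eqref{eqn:mom-univ-cube}. Using $(p-1)(2p-1) \leq 2p^2$, this gives
\[
\labs{ \norm{\mtx{X}}_{2p} - \norm{\mtx{Z}}_{2p} }^3 \leq 8p^2\sigma^2 L + 8p^3 L^3.
\]
Taking cube roots with the subadditivity $(a+b)^{1/3} \leq a^{1/3} + b^{1/3}$ yields
\[
\big(8p^2\sigma^2L\big)^{1/3} + 2pL \leq \big(8p^2\sigma^2L\big)^{1/3} + 8pL,
\]
which is the first claim.

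For the second bound, I would use~\eqref{eqn:mom-univ-rel}, which gives
\[
\labs{ \norm{\mtx{X}}_{2p} - \norm{\mtx{Z}}_{2p} } \leq 2p^2 M_{2,p} L / \norm{\mtx{Z}}_{2p}^2.
\]
Under the hypothesis $pL^2 \leq \sigma^2$, Rosenthal gives $M_{2,p} \leq 8\sigma^2$. It then suffices to show $\norm{\mtx{Z}}_{2p}^2 \geq \sigma^2$, since this delivers the bound $16p^2 L$. The lower bound on the Gaussian moment is the main obstacle.

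To obtain it, I would argue as follows. First, $\norm{\mtx{Z}}_{2p} = \norm{\mtx{Y}_t}_{2p}$ is not directly available, so instead use $\norm{\mtx{Z}}_{2p}^2 = \norm{\mtx{Z}^2}_p$. Next, apply the Jensen/conditional convexity of the $\set{L}_p(\set{S}_p)$ norm for $p \geq 1$, which gives $\norm{\mtx{Z}^2}_p \geq \norm{\Expect \mtx{Z}^2}_{\set{S}_p}$. Then use $\Expect\mtx{Z}^2 = (\Expect\mtx{Z})^2 + \Expect(\mtx{Z}-\Expect\mtx{Z})^2 \succeq \Expect(\mtx{Z}-\Expect\mtx{Z})^2$. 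Since $\mtx{Z}$ and $\mtx{X}$ have the same variance tensor, $\Expect(\mtx{Z}-\Expect\mtx{Z})^2 = \sum_i \Expect(\mtx{S}_i - \Expect\mtx{S}_i)^2$. Monotonicity of Schatten norms on the psd cone then gives $\norm{\mtx{Z}}_{2p}^2 \geq \sigma_{2p}^2$, where $\sigma_{2p}^2$ is read as the normalized $\set{S}_p$ norm of $\sum_i \Expect(\mtx{S}_i-\Expect \mtx{S}_i)^2$, which is the natural reading of~\eqref{eqn:mvar-p}.

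Two points remain to check: that this normalization of $\sigma_{2p}$ matches~\eqref{eqn:M2p-ros}, and the constant in the Rosenthal step. If the constants do not close, I would fall back on combining the first bound with the case analysis $pL^2 \leq \sigma^2$, which implies $(8p^2\sigma^2L)^{1/3} \lesssim p\,\sigma^{2/3}L^{1/3}$. That form is weaker, though, so the Gaussian lower-bound route is the intended one.
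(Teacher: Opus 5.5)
Your proposal is correct and follows the paper's sketch. Inserting the Rosenthal bound~\eqref{eqn:M2p-ros} into~\eqref{eqn:mom-univ-cube} gives the first claim, and combining it with~\eqref{eqn:mom-univ-rel} and the lower bound $\norm{\mtx{Z}}_{2p}^2 \geq \sigma_{2p}^2(\mtx{X})$ gives the second. The bound~\eqref{eqn:M2p-ros} should indeed read $4\sigma_{2p}^2 + 4pL_{2p}^2$, with the sum over $i$ in $\sigma_{2p}^2$, so your two unchecked points do close.

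The only difference is in that lower bound. The paper first removes the mean using the symmetry of $\mtx{Z} - \Expect\mtx{Z}$ and then applies Jensen. You instead apply Jensen to $\mtx{Z}^2$ directly and discard $(\Expect \mtx{Z})^2 \succeq \mtx{0}$ by monotonicity of Schatten norms on the psd cone, which is equally valid and does not even use Gaussianity.
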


\begin{proof}[Proof sketch]
The first claim follows from~\eqref{eqn:mom-univ-cube} and the Rosenthal
bound for $M_{2,p}$ given in~\eqref{eqn:M2p-ros}.  The second claim
follows from the relative error bound~\eqref{eqn:mom-univ-rel},
the Rosenthal bound~\eqref{eqn:M2p-ros}, and the fact that
\[
\norm{ \mtx{Z} }_{2p}^2 \geq \norm{ \mtx{Z} - \Expect \mtx{Z} }_{2p}^2
	\geq \sigma_{2p}^2(\mtx{Z}).
\]
The first inequality holds because $\mtx{Z} - \Expect \mtx{Z}$ has a symmetric distribution,
while the second estimate follows from Jensen's inequality.
\end{proof}

\Cref{cor:mom-univ} is a variant of
the result~\cite[Thm.~2.9]{BvH24:Universality-Sharp} with a slightly
different choice of uniform bound statistic.
\Cref{thm:mom-intro} follows from~\cref{cor:mom-univ}, along with
simple bounds for the statistics.

\subsection{Third moment bound}

We begin with a lemma that indicates how the third moment
statistic $M_{2,p}$ arises in this context.

\begin{lemma}[Third moment bound] \label{lem:triple-product-bd}
Let $(\mtx{X}, \mtx{X}', \mtx{X}'')$ be exchangeable counterparts
as in \cref{sec:mtx-cov-id}.  For each natural number $p \geq 2$,
\[
n \cdot  %
\lnorm{ \Expect \big[ \norm{\mtx{X}' - \mtx{X}''} \, (\mtx{X} - \mtx{X}')^2 \lcondbar \mtx{X} \big] }_{2p/3}
	\leq {M}_{2,p}(\mtx{X}) {L}_{2p}(\mtx{X}).
\]
\end{lemma}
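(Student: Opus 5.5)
The plan is to mirror the scalar argument in the proof of \cref{thm:scalar-poly}, where the conditional expectation was bounded by averaging over the random index $I$. First, I would use Jensen's inequality for the $\set{L}_{2p/3}(\set{S}_{2p/3})$ norm to enlarge the conditioning: conditioning on $\mtx{X}$ is dominated by conditioning on the full collection $(\mtx{S}_1,\dots,\mtx{S}_n,\mtx{S}_1',\dots,\mtx{S}_n',\mtx{S}_1'',\dots,\mtx{S}_n'')$. This is legitimate because $2p/3 \geq 4/3 \geq 1$ when $p \geq 2$, so the quantity is a genuine norm and is convex. Conditional on these summands, only the index $I$ remains random. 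Since $\mtx{X}-\mtx{X}' = \mtx{S}_I - \mtx{S}_I'$ and $\mtx{X}'-\mtx{X}'' = \mtx{S}_I' - \mtx{S}_I''$, we get
\[
n \cdot \Expect_I\big[ \norm{\mtx{X}'-\mtx{X}''} (\mtx{X}-\mtx{X}')^2 \big]
	= \sum_{i=1}^n \norm{\mtx{S}_i' - \mtx{S}_i''}\, (\mtx{S}_i - \mtx{S}_i')^2 .
\]

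Next, I would bound this positive semidefinite sum in the semidefinite order by
\[
\max\nolimits_i \norm{\mtx{S}_i'-\mtx{S}_i''} \cdot \sum_{i=1}^n (\mtx{S}_i-\mtx{S}_i')^2 .
\]
The Schatten norms are monotone on psd matrices, so the left-hand side of the lemma is at most
\[
\Big( \Expect\big[ \max\nolimits_i \norm{\mtx{S}_i'-\mtx{S}_i''}^{2p/3} \, \ntr \big( \textstyle\sum_i (\mtx{S}_i-\mtx{S}_i')^2 \big)^{2p/3} \big] \Big)^{3/(2p)} .
\]
Then I would apply H\"older's inequality to the outer expectation with exponents $3$ and $3/2$, since $1/3 + 2/3 = 1$. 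The first factor becomes $\big(\Expect \max_i \norm{\mtx{S}_i'-\mtx{S}_i''}^{2p}\big)^{1/(2p)} = L_{2p}(\mtx{X})$, where we use that $(\mtx{S}',\mtx{S}'')$ has the same law as $(\mtx{S},\mtx{S}')$. The second factor becomes $\big(\Expect (\ntr(\cdots)^{2p/3})^{3/2}\big)^{1/p}$. Finally, Lyapunov's inequality for the normalized trace (a probability average of eigenvalues) gives $(\ntr \mtx{B}^{2p/3})^{3/2} \leq \ntr \mtx{B}^{p}$ for psd $\mtx{B}$. This yields $\big(\Expect \ntr \mtx{B}^p\big)^{1/p} = M_{2,p}(\mtx{X})$.

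The main point of care, which I expect to be the only real obstacle, is keeping the exponents consistent in the H\"older step. The exponent pair must be applied to the product of the scalar $\max_i\norm{\cdot}^{2p/3}$ and the trace term, rather than inside the trace. The normalized trace then has to be handled via Jensen so that the $\set{S}_{2p/3}$ norm is upgraded to an $\set{S}_p$ norm. Everything else is bookkeeping with exchangeability.
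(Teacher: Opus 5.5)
Your proposal is correct and follows essentially the same route as the paper. You average over the random index $I$ and remove the remaining conditioning by Jensen, pull out $\max_i \norm{\mtx{S}_i' - \mtx{S}_i''}$ by semidefinite monotonicity, and finish with H\"older's inequality (exponents $1/3$ and $2/3$) plus exchangeability; your explicit Lyapunov step $(\ntr \mtx{B}^{2p/3})^{3/2} \leq \ntr \mtx{B}^{p}$ just spells out a detail the paper's one-line H\"older step leaves implicit.
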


\begin{proof}
We begin by averaging over the random index $I$,
and then we invoke Jensen's inequality to remove
the rest of the conditioning:
\begin{multline*}
n \cdot \lnorm{ \Expect \big[ \norm{\mtx{X}' - \mtx{X}''} \, (\mtx{X} - \mtx{X}')^2 \lcondbar \mtx{X} \big] }_{2p/3} \\
	\leq \lnorm{ \sum_{i = 1}^n \norm{\mtx{S}_i' - \mtx{S}_i''} \, (\mtx{S}_i - \mtx{S}_i')^2 }_{2p/3}
	\leq \lnorm{ \max\nolimits_i \norm{ \mtx{S}_i' - \mtx{S}_i'' } \cdot \sum_{i=1}^n (\mtx{S}_i - \mtx{S}_i')^2 }_{2p/3} \\
	\leq \left( \Expect \max\nolimits_i \norm{ \mtx{S}_i - \mtx{S}_i' }^{2p} \right)^{1/(2p)}
	\lnorm{ \sum_{i=1}^n (\mtx{S}_i - \mtx{S}_i')^2 }_{p}
	= {L}_{2p}(\mtx{X}) {M}_{2,p}(\mtx{X}).
\end{multline*}
We controlled the sum by drawing out the maximum of the scalar coefficients,
using the matrix monotonicity of the $\set{L}_{2p/3}$ norm.
The last step follows from H{\"o}lder's inequality for the expectation
with exponents $\theta = 1/3$ and $\theta' = 2/3$, and we have used exchangeability to
simplify. %
\end{proof}
     
\subsection{Proof of \cref{thm:mom-univ}}

Since the independent sum $\mtx{X}$ and the Gaussian proxy $\mtx{Z}$
have matching first- and second-order moments,
the results are trivially correct when $p = 1$.

Fix a natural number $p \geq 2$.  The power function $h(a) \coloneqq a^{2p}$
has derivative $f(a) \coloneqq \Diff h(a) = 2p a^{2p-1}$.  From \cref{ex:power-second-diff},
we read off the second matrix difference of the function $f$.
For self-adjoint matrix arguments, we have
\begin{equation} \label{eqn:mtx-second-diff-power-deriv}
\Delta^2 f(\mtx{A}, \mtx{B}, \mtx{C})[ \mtx{G} \otimes \mtx{H} ]
	= 2p \sum_{q+r+s = 2p - 3} \mtx{A}^q \mtx{G} \mtx{B}^r \mtx{H} \mtx{C}^s.
\end{equation}
The indices $q,r,s$ in the sum range over \hilite{nonnegative integers}, so the total
number of summands %
is ${2p - 1 \choose 2} = (p-1)(2p-1)$.

Without loss of generality, assume that the random matrices $\mtx{X}$ and $\mtx{Z}$
are independent.  Define the interpolating matrix $\mtx{Y}_t$ as in~\eqref{eqn:mtx-interp},
and construct the interpolant for the power function:
\[
u(t) \coloneqq \Expect \ntr h(\mtx{Y}_t) = \Expect \ntr \mtx{Y}_t^{2p} = \norm{ \mtx{Y}_t }_{2p}^{2p}
\quad\text{for $t \in [0,1]$.}
\]
Introduce the exchangeable counterparts $\mtx{X}', \mtx{X}''$ and $\mtx{Y}_t', \mtx{Y}_t''$ as in~\cref{thm:discrete-ibp-mtx}. %
\cref{prop:mtx-interp-sum} controls the derivative $\dot{u}(t)$ on the interpolation path.

Consider the first term in $\dot{u}(t)$.
The expression~\eqref{eqn:mtx-second-diff-power-deriv}
for the second matrix difference gives
\begin{align*}
\onecirc &\coloneqq \frac{n\sqrt{t}}{4} \Expect \ntr\big[ (\mtx{X} - \mtx{X}') \cdot \Delta^2 f(\mtx{Y}_t, \mtx{Y}_t', \mtx{Y}_t'')[(\mtx{X} - \mtx{X}') \otimes (\mtx{X}' - \mtx{X}'') ] \big] \\ 	&=\frac{2pn\sqrt{t}}{4} \sum_{q+r+s = 2p-3} \Expect \ntr\big[ (\mtx{X} - \mtx{X}') \cdot (\mtx{Y}_t)^q (\mtx{X} - \mtx{X}') (\mtx{Y}_t')^r (\mtx{X}' - \mtx{X}'') (\mtx{Y}_t'')^s \big].
\end{align*}
Define the random matrices $\mtx{W}_0 = \mtx{W}_1 = \mtx{X} - \mtx{X}'$ and $\mtx{W}_2 = \mtx{X}' - \mtx{X}''$.
For each of the ${2p-1 \choose 2}$ choices for the indices $(q, r, s)$, %
apply the matrix consolidation inequality for exchangeable random matrices (\cref{prop:mtx-solid-exch})
to reach
\[
\abs{ \onecirc } \leq \frac{2p n\sqrt{t}}{4} {2p - 1 \choose 2} \cdot \max\nolimits_{\pi, j, k}
	\Expect \ntr\big[ \Expect \big[ \norm{\mtx{W}_j^\pi} \, (\mtx{W}_k^\pi)^2 \lcondbar \mtx{X} \big] \cdot \abs{ \mtx{Y}_t }^{2p-3} \big].
\]
To decouple the matrices $\mtx{W}_j$ from the interpolant $\mtx{Y}_t$,
invoke H{\"o}lder's inequality with exponents $\theta = 3/(2p)$ and $\theta' = (2p-3)/(2p)$.  Thus,
\begin{align*}
\abs{ \onecirc } &\leq \frac{2p \sqrt{t}}{4} {2p - 1 \choose 2} \cdot \max\nolimits_{\pi, j, k}
	n \cdot \lnorm{ \Expect \big[ \norm{\mtx{W}_j^\pi} \, (\mtx{W}_k^\pi)^2 \lcondbar \mtx{X} \big] }_{2p/3}
	\cdot \norm{ \mtx{Y}_t }_{2p}^{1 - 3/(2p)} \\
	&\leq \frac{2p (2p-1)(2p-2) \sqrt{t}}{8} \cdot {M}_{2,p}(\mtx{X}) {L}_{2p}(\mtx{X}) \cdot u(t)^{1 - 3/(2p)}.
\end{align*}
The precise calculation depends on the choice of $(\pi, j, k)$ in the maximum,
but the steps are identical with \cref{lem:triple-product-bd}.
This expression exposes a copy of the interpolant $u(t)$,
so it amounts to a moment comparison.
The second term $\twocirc$ in $\dot{u}(t)$ admits the same bound.

Combining the bounds for $\onecirc$ and $\twocirc$, we arrive at the differential inequality
\[
\abs{ \dot{u}(t) }
	\leq \abs{\onecirc} + \abs{\twocirc}
	\leq \frac{2p (2p-1)(2p-2) \sqrt{t}}{4} \cdot {M}_{2,p}(\mtx{X}) {L}_{2p}(\mtx{X}) \cdot u(t)^{1 - 3/(2p)}.
\]
To solve the inequality, we integrate the function $u(t)^{3/(2p)}$
to reach %
\begin{align*}
\abs{ u(1)^{3/(2p)} - u(0)^{3/(2p)} }
	\leq \frac{3}{2p} \int_0^1 \abs{ u(t) }^{3/(2p) - 1} \cdot \abs{ \dot{u}(t) } \idiff{t} %
	\leq (p-1)(2p-1) \cdot {M}_{2,p}(\mtx{X}) {L}_{2p}(\mtx{X}).
\end{align*}
Finally, note that $u(1)^{1/(2p)} = \norm{ \mtx{X} }_{2p}$
and $u(0)^{1/(2p)} = \norm{ \mtx{Z} }_{2p}$.
The remaining bounds follow from the numerical inequalities~\eqref{eqn:cube-ineqs}.
\hfill\qed

\section{Universality for resolvent norms}
\label{sec:resolvent-norm}

Finally, we turn to the universality result for the norm
of the resolvent matrix.

\begin{theorem}[Resolvent norm: Universality] \label{thm:resolvent-norm-univ}
Let $\mtx{X}$ be an independent sum~\eqref{eqn:indep-sum-mtx} of random self-adjoint matrices,
and construct the matching Gaussian matrix $\mtx{Z}$ as in ~\eqref{eqn:gauss-proxy-pf}.
For each complex number $\zeta \in \C \setminus \R$ and each natural number $p \in \N$,
the norm of the resolvent $R_{\zeta}$ satisfies the comparison
\[
\labs{ \norm{ R_{\zeta}(\mtx{X}) }_{2p} - \norm{ R_{\zeta}(\mtx{Z} ) }_{2p} }
	\leq \frac{ (p+1)(2p+1) \cdot {M}_{2,p}(\mtx{X}) {L}_{\infty}(\mtx{X})}{\abs{\Im \zeta}^4}.
\]
The statistics $L_{\infty}$ and $M_{2,p}$ are defined
in~\eqref{eqn:mvar-p} and~\eqref{eqn:M2p-mtx}.
\end{theorem}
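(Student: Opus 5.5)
We will follow the template of the proofs of \cref{thm:univ-cauchy,thm:mom-univ}. Fix $\zeta \in \C\setminus\R$ and $p \in \N$, and consider the trace function $h(a) \coloneqq \abs{R_\zeta(a)}^{2p} = (\zeta - a)^{-p}(\bar\zeta - a)^{-p}$. This is a rational function with no real poles, and for self-adjoint $\mtx{A}$ we have $\ntr h(\mtx{A}) = \norm{R_\zeta(\mtx{A})}_{2p}^{2p}$ because the resolvent is normal. Assume $\mtx{X}$ and $\mtx{Z}$ are independent and form the interpolant $u(t) \coloneqq \Expect\ntr h(\mtx{Y}_t) = \norm{R_\zeta(\mtx{Y}_t)}_{2p}^{2p}$. \Cref{prop:mtx-interp-sum} then expresses $\dot u(t)$ through second matrix differences of $f \coloneqq \Diff h$.

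The first step is to compute $\Delta^2 f$. Since $f = p R_\zeta^{p+1}\bar R^{p} + p R_\zeta^{p}\bar R^{p+1}$ with $\bar R \coloneqq R_{\bar\zeta}$, we evaluate the $3\times3$ block bidiagonal matrix in \cref{def:matrix-diff-2} through products of the block resolvents from \cref{ex:inv-power-second-diff}, applied to $\zeta$ and $\bar\zeta$. Each term of $\Delta^2 f(\mtx{A}_0,\mtx{A}_1,\mtx{A}_2)[\mtx{H}_1\otimes\mtx{H}_2]$ is then a word of the form $\mtx{P}_0\mtx{H}_1\mtx{P}_1\mtx{H}_2\mtx{P}_2$. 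Here each $\mtx{P}_\ell$ is a product of powers of $R_\zeta(\mtx{A}_\ell)$ and $R_{\bar\zeta}(\mtx{A}_\ell)$, hence a normal matrix that is a function of $\mtx{A}_\ell$. In every term the total count of resolvent factors is $2p+3$. We will count the terms and arrange them so that each has the form $\mtx{B}_0^{q}\mtx{H}_1\mtx{B}_1^{r}\mtx{H}_2\mtx{B}_2^{s}$ times a leftover bounded factor. Here $\mtx{B}_\ell$ is a normal matrix with $\abs{\mtx{B}_\ell}^{2p} = \abs{R_\zeta(\mtx{A}_\ell)}^{2p}$, with $q+r+s = 2p$, and with the three remaining resolvent factors bounded in norm by $\abs{\Im\zeta}^{-3}$. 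Commuting factors within a block is legitimate because all of them are functions of the same $\mtx{A}_\ell$.

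The second step applies the consolidation inequality. We want a version of \cref{prop:mtx-solid-exch} that tolerates the extra bounded factors. Its proof goes through if those factors are absorbed into the $\mtx{W}_j$ via the operator-norm bound, or if we first use the spectral norm bound on the outer factor as in the proof of \cref{prop:mtx-solid}. The outcome should be a bound of the form $\abs{\Im\zeta}^{-3}\max_{\pi,j,k}\Expect\ntr[\Expect[\norm{\mtx{W}_j^\pi}(\mtx{W}_k^\pi)^2\condbar\mtx{X}]\,\abs{R_\zeta(\mtx{Y}_t)}^{2p}]$. Now $\norm{\mtx{W}_j}\le L_\infty(\mtx{X})$ uniformly, so Hölder's inequality with exponents $(1/p, 1-1/p)$ gives $n\norm{\Expect[(\mtx{W}_k^\pi)^2\condbar\mtx{X}]}_p \le M_{2,p}(\mtx{X})$, by the argument of \cref{lem:triple-product-bd} without the $L_{2p}$ factor, times $u(t)^{1-1/p}$. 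The outcome is the differential inequality $\abs{\dot u(t)} \le C_p\sqrt t\, M_{2,p}L_\infty\abs{\Im\zeta}^{-3}u(t)^{1-1/p}$. Integrating $u^{1/(2p)}$ is not the natural substitution here; we integrate $\phi = u^{1/p}$ instead, which controls $\abs{\norm{R_\zeta(\mtx{X})}_{2p}^2-\norm{R_\zeta(\mtx{Z})}_{2p}^2}$. The normalization $\abs{\Im\zeta}^{-4}$ in the statement suggests the better route: bound one extra resolvent in norm so that we keep exactly the power $2p+\!$(consolidated) $-1$, and then integrate $u^{1/(2p)}$. The factors would be arranged as $q+r+s = 2p-1$, which yields $u^{1-1/(2p)}$ and, through the derivative of $u^{1/(2p)}$, a direct bound on $\abs{\norm{R_\zeta(\mtx{X})}_{2p}-\norm{R_\zeta(\mtx{Z})}_{2p}}$ carrying $\abs{\Im\zeta}^{-4}$ and Hölder exponent $2p/(2p-1)$.

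The main obstacle is this bookkeeping. We must choose which resolvent factors to consolidate and which to bound in norm, so that the consolidated power matches $2p-1$ exactly while $M_{2,p}$, a $\set{L}_p$ norm, still appears. Hölder with exponents $1/(2p)$ and $(2p-1)/(2p)$ would instead produce an $\set{L}_{2p}$ norm of the conditional second moment, which is dominated by $\set{L}_p$ only after one more norm bound on $\mtx{W}$. We must also count the terms to obtain the constant $(p+1)(2p+1) = \binom{2p+2}{2}$, which is the number of placements of the two direction matrices among $2p$ resolvent-type letters, consistent with $\Delta^2$ of a degree $-(2p+1)$ product. If the direct consolidation fails for mixed $\zeta,\bar\zeta$ words, we will fall back on the cruder route via $u^{1/p}$ and accept a worse constant.
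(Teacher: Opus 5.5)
You have not closed the argument. The route you actually carry out is flawed, and the route you call better stops exactly at the step that matters.

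\textbf{The first route.} Write $\mtx{A} \coloneqq \Expect[\norm{\mtx{W}_j^\pi}(\mtx{W}_k^\pi)^2 \condbar \mtx{X}] \succeq \mtx{0}$. Hölder's inequality with exponents $(1/p, 1-1/p)$, applied to $\Expect\ntr[\mtx{A}\,\abs{R_\zeta(\mtx{Y}_t)}^{2p}]$, produces $\norm{R_\zeta(\mtx{Y}_t)}_{2p^2/(p-1)}^{2p}$, not $u(t)^{1-1/p}$. Your differential inequality is not even homogeneous: its right-hand side is short two powers of the resolvent scale. To repair it you must keep only $\abs{R_\zeta(\mtx{Y}_t)}^{2p-2}$, which costs $\abs{\Im\zeta}^{-5}$. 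Integrating $u^{1/p}$ then bounds $\abs{\norm{R_\zeta(\mtx{X})}_{2p}^2 - \norm{R_\zeta(\mtx{Z})}_{2p}^2}$. That does not imply the stated linear estimate, because $\norm{R_\zeta(\cdot)}_{2p}$ has no lower bound (it is small when the spectrum is far from $\zeta$). So your fallback proves a different inequality, not the theorem with a worse constant.

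\textbf{The second route.} This is the paper's proof, which executes it in a cleaner order.
\begin{enumerate}
\item Consolidate the entire word of total degree $2p+3$ first. Mixed $\zeta,\bar\zeta$ words are harmless: each block equals $\mtx{V}_\ell^{m}\abs{R_\zeta(\mtx{Y}^{(\ell)})}^{k}$ for some integer $m$ and a unitary $\mtx{V}_\ell$ commuting with $\abs{R_\zeta(\mtx{Y}^{(\ell)})}$. That is all the proof of \cref{prop:mtx-solid} needs.
\item Only afterwards use $\mtx{A} \succeq \mtx{0}$ to replace $\abs{R_\zeta(\mtx{Y}_t)}^{2p+3}$ by $\abs{\Im\zeta}^{-4}\abs{R_\zeta(\mtx{Y}_t)}^{2p-1}$.
\item Apply Hölder with exponents $(1/(2p), (2p-1)/(2p))$ and integrate $u^{1/(2p)}$.
\end{enumerate}
The second difference of $f$ has $2(p+1)(2p+1)$ words, each weighted by $p$. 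That factor $p$ cancels against the $1/(2p)$ from differentiating $u^{1/(2p)}$, so the constant $(p+1)(2p+1)$ comes out with room to spare. Extracting bounded factors from the word before consolidating, as you propose, is unnecessary and not justified as stated. Absorbing a resolvent into $\mtx{W}_j$ destroys the self-adjointness that \cref{prop:mtx-solid} uses.

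\textbf{The obstacle you isolate is genuine, and the paper's proof does not get past it either.} The paper arrives at $n\norm{\mtx{A}}_{2p}$ and asserts the bound $M_{2,p}(\mtx{X})L_\infty(\mtx{X})$ by analogy with \cref{lem:triple-product-bd}. But that argument, run at index $2p$, gives
\[
n\norm{\mtx{A}}_{2p} \leq L_\infty(\mtx{X})\,\lnorm{\sum_i(\mtx{S}_i-\mtx{S}_i')^2}_{2p} = L_\infty(\mtx{X})\,M_{2,2p}(\mtx{X}),
\]
and $M_{2,2p} \geq M_{2,p}$ because the normalized norms increase with the index. An extra operator-norm bound does not rescue this. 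Interpolating $\norm{\mtx{B}}_{2p} \leq \norm{\mtx{B}}_\infty^{1/2}\norm{\mtx{B}}_p^{1/2}$ with $\mtx{B} = n\mtx{A}$ brings in $\norm{n\Expect[(\mtx{W}_k^\pi)^2 \condbar \mtx{X}]}_\infty$, which is of variance size, not of size $L_\infty^2$.

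Nor can any rearrangement of Hölder and operator-norm bounds at the decoupling step produce $\norm{\mtx{A}}_p$. In dimension one, take an event $E$ of probability $\varepsilon$, let $\mtx{A} = a\mathbf{1}_E$, and let $\abs{R} = \abs{\Im\zeta}^{-1}$ on $E$ with $\abs{R}$ negligible off $E$. Then $\Expect[\mtx{A}\abs{R}^{2p+3}] = a\varepsilon\abs{\Im\zeta}^{-2p-3}$, whereas $\norm{\mtx{A}}_p\abs{\Im\zeta}^{-4}u^{1-1/(2p)} \approx a\varepsilon^{1+1/(2p)}\abs{\Im\zeta}^{-2p-3}$. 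The ratio blows up as $\varepsilon \to 0$.

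So what this method delivers, whether you finish your second route or follow the paper, is the theorem with $M_{2,2p}(\mtx{X})$ in place of $M_{2,p}(\mtx{X})$. This still yields \cref{thm:resolvent-norm} up to numerical constants, since that result is phrased through $\sigma^2(\mtx{X})$ and $L(\mtx{X})$. To complete your proposal, commit to the second route and state that conclusion. The version with $M_{2,p}$ would need an idea beyond Hölder at the decoupling step.
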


Applying the matrix Rosenthal inequality (\cref{fact:rosenthal-mtx})
as in~\eqref{eqn:M2p-ros}, we can obtain estimates
that depend on the matrix variance statistic.

\begin{corollary}[Resolvent norm: Universality] \label{cor:resolvent-norm-univ}
Instate the assumptions of \cref{thm:resolvent-norm-univ}.
Then
\[
\labs{ \norm{ R_{\zeta}(\mtx{X}) }_{2p} - \norm{ R_{\zeta}( \mtx{Z} ) }_{2p} }
	\leq \frac{ (p+1)(2p+1) \cdot \big( 4 \sigma_{2p}^2(\mtx{X}) L_{\infty}(\mtx{X}) + 4 p L_{\infty}^3(\mtx{X})\big)}{\abs{\Im \zeta}^4}.
\]
\end{corollary}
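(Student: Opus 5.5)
The statement to prove is \cref{cor:resolvent-norm-univ}. It is a direct consequence of \cref{thm:resolvent-norm-univ} once we bound the statistic $M_{2,p}(\mtx{X})$ by more accessible quantities. I will therefore take the comparison in \cref{thm:resolvent-norm-univ} as given,
\[
\labs{ \norm{ R_{\zeta}(\mtx{X}) }_{2p} - \norm{ R_{\zeta}(\mtx{Z}) }_{2p} }
	\leq \frac{(p+1)(2p+1)\, M_{2,p}(\mtx{X})\, L_\infty(\mtx{X})}{\abs{\Im\zeta}^4},
\]
and feed in an upper bound for $M_{2,p}(\mtx{X})$ obtained from the matrix Rosenthal inequality.

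The key step is a Rosenthal estimate of the form
\[
M_{2,p}(\mtx{X}) \leq 4\sigma_{2p}^2(\mtx{X}) + 4p\, L_{2p}^2(\mtx{X}).
\]
This is the bound~\eqref{eqn:M2p-ros}. As printed there the second term reads $4pL_{2p}$, but it must carry the square for the units to match. To derive it, apply \cref{fact:rosenthal-mtx} (positive case) to the independent psd summands $(\mtx{S}_i - \mtx{S}_i')^2$. This gives control by $\norm{\sum_i \Expect(\mtx{S}_i-\mtx{S}_i')^2}^{1/2}$ plus a term of order $\sqrt{p}\,\norm{\max_i \norm{\mtx{S}_i-\mtx{S}_i'}^2}_p^{1/2}$, everything inside a square. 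Two observations convert this into the displayed form:
\begin{itemize}
\item $\Expect(\mtx{S}_i-\mtx{S}_i')^2 = 2\Expect(\mtx{S}_i-\Expect\mtx{S}_i)^2$, so the first piece is a multiple of $\sigma_{2p}^2(\mtx{X})$.
\item The second piece is $L_{2p}(\mtx{X})$.
\end{itemize}
Expanding the square with $(a+b)^2\le 2a^2+2b^2$ then yields constants $4$ and $4p$, mirroring the scalar computation~\eqref{eqn:M3-bd-scalar} in \cref{cor:scalar-poly}. Finally, monotonicity of $\set{L}_q$ norms gives $L_{2p}(\mtx{X})\le L_\infty(\mtx{X})$.

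Substituting, we get
\[
M_{2,p}(\mtx{X})\,L_\infty(\mtx{X}) \leq 4\sigma_{2p}^2(\mtx{X})L_\infty(\mtx{X}) + 4p L_\infty^3(\mtx{X}),
\]
and inserting this into \cref{thm:resolvent-norm-univ} gives the claim.

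The only delicate point is tracking the constants in the matrix Rosenthal step, namely the factor $2$ from symmetrization and the dependence of the Rosenthal constant on $p$, so that the stated constants $4$ and $4p$ actually come out. I would verify this against the precise form of \cref{fact:rosenthal-mtx}. If that fact gives a slightly worse constant, I would absorb it using $p\ge1$, or else cite~\eqref{eqn:M2p-ros} directly as the paper does.
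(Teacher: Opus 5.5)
Your proposal is correct and follows the paper's route exactly: insert the Rosenthal bound \eqref{eqn:M2p-ros} for $M_{2,p}(\mtx{X})$ into \cref{thm:resolvent-norm-univ}, then use $L_{2p}(\mtx{X}) \le L_\infty(\mtx{X})$; you are also right that \eqref{eqn:M2p-ros} should read $4pL_{2p}^2(\mtx{X})$. The constants you flagged do come out as stated: applying the positive case of \cref{fact:rosenthal-mtx} at order $p/2$ (not $p$) to the psd summands $(\mtx{S}_i-\mtx{S}_i')^2$, with $p=1$ trivial, gives $M_{2,p}(\mtx{X}) \le [\sqrt{2}\,\sigma_{2p}(\mtx{X}) + \sqrt{2p-2}\,L_{2p}(\mtx{X})]^2 \le 4\sigma_{2p}^2(\mtx{X}) + 4pL_{2p}^2(\mtx{X})$, so no constant needs absorbing.
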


\Cref{cor:resolvent-norm-univ} is a variant of~\cite[Thm.~6.8]{BvH24:Universality-Sharp}
with a slightly different choice of upper bound statistic.
\Cref{thm:resolvent-norm} follows from~\cref{cor:resolvent-norm-univ}
after some simple bounds on the statistics and a coarse bound on the
factors involving $p$.

\subsection{Resolvent powers: Second matrix differences}

To prepare, we need to compute the second matrix difference
of the resolvent power.
Fix a complex number $\zeta \in \C \setminus \R$ with nonzero imaginary part.
Introduce the resolvent function $R_{\zeta}(a) \coloneqq (\zeta - a)^{-1}$ for $a \in \R$.
For a self-adjoint matrix $\mtx{A}$, abbreviate its resolvent:
$\mtx{A}_{\zeta} \coloneqq R_\zeta(\mtx{A}) = (\zeta - \mtx{A})^{-1}$.
Note that $\mtx{A}_{\zeta}$ is a normal matrix
with adjoint $(\mtx{A}_{\zeta})^* = \mtx{A}_{\zeta^*}$.

For a natural number $p \in \N$, we are interested in the resolvent power
\[
h(a) \coloneqq \abs{R_{\zeta}(a)}^{2p}
	= R_{\zeta^*}(a)^p R_{\zeta}(a)^p.
\]
Since $\Diff R_{\zeta}(a) = R_{\zeta}(a)^{2}$, we have
\[
f(a) \coloneqq (\Diff h)(a) = p \cdot R_{\zeta^*}(a)^{p+1} R_{\zeta}(a)^p
	+ p \cdot  R_{\zeta}(a)^{p+1} R_{\zeta^*}(a)^p.
\]
This function is more complicated than the ones we have considered before,
but we can quickly obtain its second differences by breaking down the calculation.

We will argue that
\begin{equation} \label{eqn:resolvent-second-diff}
\Delta^2 f(\mtx{A}, \mtx{B}, \mtx{C}) %
	= p \cdot \sum\nolimits_{q_i, r_i, s_i \in \set{Q}} \mtx{A}_{\zeta^*}^{q_1} \mtx{A}_{\zeta}^{q_2} \otimes %
	\mtx{B}_{\zeta^*}^{r_1} \mtx{B}_{\zeta}^{r_2} \otimes %
	\mtx{C}_{\zeta^*}^{s_1} \mtx{C}_{\zeta}^{s_2}.
\end{equation}
In this expression, $\set{Q}$ is a multiset consisting of sextuples
$(q_1, q_2, r_1, r_2, s_2, s_3)$ of nonnegative integers.
The cardinality $\# \set{Q} = 2(p+1)(2p + 1)$, and each sextuple sums to $2p + 3$.
For legibility, we use the tensor product $\otimes$ as a placeholder for the direction
arguments in the matrix second difference.

The (matrix) second difference of $f$ takes the form
\begin{equation} \label{eqn:res-pow-2diff}
\Delta^2 f = p\cdot  \Delta^2 \big(R_{\zeta^*}^{p+1} R_{\zeta}^p \big) + p \cdot \Delta^2 \big(R_{\zeta}^{p+1} R_{\zeta^*}^p \big).
\end{equation}
To calculate each of the two matrix second differences
appearing in~\eqref{eqn:res-pow-2diff},
we begin with \cref{ex:inv-power-second-diff}.
This result implies that
\begin{align*}
R_{\zeta}\left(
\begin{bmatrix}
\mtx{A} & \mtx{G} & \\
& \mtx{B} & \mtx{H} \\
&& \mtx{C}
\end{bmatrix} \right)^{p} %
= \begin{bmatrix}
	\mtx{A}_{\zeta}^p &
	\sum_{q+r = p + 1}^+ \mtx{A}_{\zeta}^q \mtx{G} \mtx{B}_{\zeta}^r &
	\sum_{q+r+s = p + 2}^+ \mtx{A}_{\zeta}^q \mtx{G} \mtx{B}_{\zeta}^r \mtx{H} \mtx{C}_{\zeta}^s \\
	& \mtx{B}_{\zeta}^p &
	\sum_{r+s = p + 1}^+ \mtx{B}_{\zeta}^r \mtx{H} \mtx{C}_{\zeta}^s \\
	&& \mtx{C}_{\zeta}^p
\end{bmatrix}.
\end{align*}
In this expression, the indices $q,r,s$ range over \hilite{natural numbers}.
An analogous formula holds for the matrix second difference of
the function $R_{\zeta^*}(\cdot)^{p+1}$.
Multiplying out these two block matrices and extracting the northeast block,
\begin{align*} \label{eqn:resolvent-second-diff-part}
\Delta^2 \big( R_{\zeta^*}^{p+1} R_{\zeta}^p \big)(\mtx{A}, \mtx{B}, \mtx{C}) %
	&= \sum_{q+r+s = p+2}^+ \mtx{A}_{\zeta^*}^{p+1}\mtx{A}_{\zeta}^q \otimes  \mtx{B}_{\zeta}^r \otimes  \mtx{C}_{\zeta}^s \\
	&+ \sum_{q+r=p+2}^+ \sum_{r'+s=p+1} \mtx{A}_{\zeta^*}^q \otimes  \mtx{B}_{\zeta^*}^r \mtx{B}_{\zeta}^{r'} \otimes  \mtx{C}_{\zeta}^s \\
	&+ \sum_{q+r+s = p+3}^+ \mtx{A}_{\zeta^*}^{q} \otimes  \mtx{B}_{\zeta^*}^r \otimes  \mtx{C}_{\zeta^*}^s \mtx{C}_{\zeta}^{p}.
\end{align*}
In each product, the total of the powers is $2p + 3$, but the number of adjoints varies.
The total number of summands across the three lines is $(p+1)(2p + 1)$.
The other term in the second difference~\eqref{eqn:res-pow-2diff}
takes the same form with $\zeta \rightarrow \zeta^*$.
These considerations lead to the result~\eqref{eqn:resolvent-second-diff}.

\subsection{Proof of \cref{thm:resolvent-norm-univ}}

We may assume that the independent sum $\mtx{X}$ and the Gaussian proxy $\mtx{Z}$
are independent.
Define the interpolating matrix $\mtx{Y} \coloneqq \mtx{Y}(t)$,
as in~\eqref{eqn:mtx-interp}.
The interpolant of the resolvent power is
\[
u(t) \coloneqq \Expect \ntr h(\mtx{Y}(t)) = \Expect \ntr \abs{R_{\zeta}(\mtx{Y}(t))}^{2p}
	= \norm{ {R}_{\zeta}(\mtx{Y}(t)) }_{2p}^{2p}.
\]
We will suppress the time variable $t$ for the rest of the calculation.
Introduce the exchangeable counterparts $\mtx{X}', \mtx{X}''$ and $\mtx{Y}', \mtx{Y}''$ as in~\cref{thm:discrete-ibp-mtx}. %
\Cref{prop:mtx-interp} furnishes the derivative $\dot{u}(t)$ along the interpolation path.
The bound for $\dot{u}(t)$ combines the techniques from the universality
for resolvents and for moments. 

Consider the first component $\onecirc$ of the derivative $\dot{u}(t)$.
In view of~\eqref{eqn:resolvent-second-diff},
\begin{align*}
\onecirc &\coloneqq \frac{n \sqrt{t}}{4} \Expect \ntr \big[ (\mtx{X} - \mtx{X}') \cdot
	\Delta^2 f(\mtx{Y}, \mtx{Y}', \mtx{Y}'')[ (\mtx{X} - \mtx{X}') \otimes (\mtx{X}' - \mtx{X}'') ] \big] \\
	&= \frac{pn\sqrt{t}}{4} \sum_{\set{Q}} \Expect \ntr \big[
	(\mtx{X} - \mtx{X}') \cdot (\mtx{Y}_{\zeta^*})^{q_1} (\mtx{Y}_{\zeta})^{q_2} \,
	 (\mtx{X} - \mtx{X}') \, (\mtx{Y}_{\zeta^*}')^{r_1} (\mtx{Y}_{\zeta}')^{r_2} \, %
	(\mtx{X}' - \mtx{X}'') \, (\mtx{Y}_{\zeta^*}'')^{s_1} (\mtx{Y}_{\zeta}'')^{s_2} \big].
\end{align*}
This expression looks formidable, but it is not any more challenging than before.
Each pair $\mtx{Y}_{\zeta}$ and $\mtx{Y}_{\zeta^*}$ consists of commuting
normal matrices, and $\abs{ \mtx{Y}_{\zeta} } = \abs{ \mtx{Y}_{\zeta^*} }$.
Thus, we can repeat the proof of \cref{prop:mtx-solid-exch} without any
essential change to consolidate the powers.
Since $\#\set{Q} = 2(p+1)(2p+1)$ and the powers sum to $2p + 3$,
we arrive at the bound
\[
\abs{\onecirc} \leq \frac{2p(p+1)(2p+1) \sqrt{t}}{4} %
	\max\nolimits_{\pi, j,k} n \cdot \Expect \ntr \big[ \Expect \big[ \norm{ \mtx{W}_j^\pi} \, (\mtx{W}_k^\pi)^2 \lcondbar \mtx{X} \big]
	\cdot \abs{ \mtx{Y}_{\zeta} }^{2p+3} \big].
\]
As before, $\mtx{W}_0 = \mtx{W}_1 = \mtx{X} - \mtx{X}'$ and $\mtx{W}_2 = \mtx{X}' - \mtx{X}''$.

To continue, we must decouple the random matrices in the expected trace.
For this purpose, we factor out $\abs{\mtx{Y}_{\zeta}}^4$,
and we apply H{\"o}lder's inequality with the exponents $(1/2p, (2p-1)/(2p), 0)$.  Thus,
\begin{align*}
\abs{\onecirc} &\leq \frac{2p(p+1)(2p+1) \sqrt{t}}{4} \cdot \max\nolimits_{\pi,j,k}
	n \cdot \lnorm{ \Expect \big[ \norm{ \mtx{W}_j^\pi} \, (\mtx{W}_k^\pi)^2 \lcondbar \mtx{X} \big] }_{2p}
	 \cdot \norm{ \mtx{Y}_{\zeta} }_{2p}^{2p - 1}\cdot \norm{ \mtx{Y}_{\zeta} }_{\infty}^4  \\
	&\leq \frac{2p(p+1)(2p+1) \sqrt{t}}{4} \cdot \frac{ {M}_{2,p}(\mtx{X}) {L}_{\infty}(\mtx{X})}{\abs{\Im \zeta}^4} \cdot u(t)^{1-1/(2p)}.
\end{align*}
The bound for the maximum follows the same pattern as before
(cf.~\cref{lem:triple-product-bd}),
as does the trivial spectral norm bound for the resolvent.  We have located
a copy of the interpolant $u(t)$.  The second term
$\twocirc$ in $\dot{u}(t)$ satisfies the identical bound.

In summary, we have shown that
\[
\abs{ \dot{u}(t) }
	\leq \abs{\onecirc} + \abs{\twocirc}
	\leq \frac{2p(p+1)(2p+1) \sqrt{t}}{2} \cdot \frac{ {M}_{2,p}(\mtx{X}) {L}_{\infty}(\mtx{X})}{\abs{\Im \zeta}^4} \cdot u(t)^{1-1/(2p)}.
\]
From here, the end of the proof is the same as always.
\begin{align*}
\labs{ u(1)^{1/(2p)} - u(0)^{1/(2p)} }
	\leq \frac{1}{2p} \int_{0}^1 \abs{u(t)}^{1/(2p) - 1} \abs{\dot{u}(t)} \idiff{t}
	\leq (p+1)(2p+1) \cdot \frac{ {M}_{2,p}(\mtx{X}) {L}_{\infty}(\mtx{X})}{\abs{\Im \zeta}^4}.
\end{align*}
Finally, recall the meanings of $u(1)$ and $u(0)$.  %
\hfill\qed

\appendix

\section{Scalar divided differences: Further properties}
\label{app:scalar-diff}

Divided differences are obviously related to simple differences of function values.  For example,
\begin{equation} \label{eqn:div-diff-diff}
f(a) - f(b) = (a-b) \cdot (\Delta f)(a,b)
\quad\text{for all $a, b \in \R$.}
\end{equation}
By iteration, we can expand the difference to higher order.
In its general form, this result is called the Taylor--Taylor theorem~\cite[Thm.~4.1]{KV14:Foundations-Free}.

\begin{fact}[Scalar Taylor--Taylor] \label{fact:taylor-taylor}
Consider a smooth function $f : \R \to \R$, and fix two points $a, b \in \R$.
Then
\[
f(a) - f(b) = \sum_{p=1}^k \frac{(a-b)^p}{p!} \cdot (\Diff^p f)(b)
	+ (a-b)^{k+1} \cdot \Delta^{k+1}(a, b, \dots, b).
\]
In the divided difference $\Delta^{k+1}(a, b, \dots, b)$, the second argument $b$ is repeated $k$ times.
\end{fact}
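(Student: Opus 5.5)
We argue by induction on $k$, using the recursive definition of divided differences together with two standard facts: divided differences are symmetric functions of their arguments, and in the confluent case $\Delta^k f(b,\dots,b) = \Diff^k f(b)/k!$. We read the remainder as $\Delta^{k+1} f(a,b,\dots,b)$ with $k+2$ arguments, namely $a$ followed by $k+1$ copies of $b$. This is the only reading consistent with the order of the difference, and it matches the base case. We first assume $a \neq b$; the case $a = b$ is trivial, since both sides vanish.

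\textbf{Base case} $k=0$. The claim reads $f(a) - f(b) = (a-b)\,\Delta f(a,b)$, which is exactly the definition of the first divided difference, recorded as~\eqref{eqn:div-diff-diff}.

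\textbf{Key one-step identity.} For each $j \geq 1$ and $a \neq b$, we want
\[
\Delta^{j} f(a,b,\dots,b) - \Delta^{j} f(b,b,\dots,b) = (a-b)\,\Delta^{j+1} f(a,b,\dots,b),
\]
where the left-hand side has $j+1$ arguments and the right-hand side has $j+2$. To obtain this, we apply the recursive definition of $\Delta^{j+1}$ at the point $(a_0,\dots,a_{j+1}) = (a,b,\dots,b)$. This gives
\[
\Delta^{j+1} f(a,b,\dots,b) = \frac{\Delta^{j} f(a,b,\dots,b) - \Delta^{j} f(b,\dots,b)}{a-b},
\]
which is legitimate because the first and last arguments $a_0 = a$ and $a_{j+1} = b$ are distinct. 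Symmetry is not even needed with this ordering of the arguments. We then replace $\Delta^j f(b,\dots,b)$ by $\Diff^j f(b)/j!$, using the confluent formula.

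\textbf{Induction step.} Assume the expansion holds at order $k-1$, with remainder $(a-b)^{k}\,\Delta^{k} f(a,b,\dots,b)$. Apply the one-step identity with $j = k$:
\[
(a-b)^k\,\Delta^k f(a,b,\dots,b) = \frac{(a-b)^k}{k!}\,\Diff^k f(b) + (a-b)^{k+1}\,\Delta^{k+1} f(a,b,\dots,b).
\]
Substituting this into the order-$(k-1)$ expansion yields the order-$k$ expansion.

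The only delicate point is the \textbf{confluent case} and the use of the recursive definition when arguments repeat. The recursion is used only where the endpoints $a_0$ and $a_{j+1}$ differ, and the value at coincident points comes from continuity through the formula $\Delta^k f(b,\dots,b) = \Diff^k f(b)/k!$. So no division by zero arises. The case $a = b$ can alternatively be recovered by continuity of both sides in $a$.
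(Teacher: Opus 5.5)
Your proof is correct and follows the paper's approach. The paper justifies this fact only by remarking that one iterates the first-order identity \eqref{eqn:div-diff-diff}, citing the general Taylor--Taylor theorem. Your induction, which applies the recursive definition at $(a,b,\dots,b)$ and uses the confluent formula $\Delta^k f(b,\dots,b) = \Diff^k f(b)/k!$, is exactly that iteration made explicit. You also read the remainder correctly as $\Delta^{k+1} f$ evaluated at $a$ followed by $k+1$ copies of $b$: the statement's ``repeated $k$ times'' counts the copies after the first $b$, and the missing $f$ in the statement is a typo.
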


The first divided difference is clearly a symmetric function of its arguments:
\[
(\Delta f)(a, b) = (\Delta f)(b, a)
\quad\text{for all $a, b \in \R$.}
\]
By iteration, higher-order divided differences $(\Delta^k f)(a_0, \dots, a_k)$
are also symmetric in the arguments $(a_0, \dots, a_k)$.

Divided differences support calculus rules that resemble the
rules for manipulating derivatives.  These results follow
from the definition and the continuity properties of the
divided differences.

\begin{fact}[Divided differences: Calculus] \label{fact:diff-calculus}
Consider smooth functions $f, g : \R \to \R$, and fix points $a, b \in \R$.
For the composition, the chain rule states that
\[
(\Delta (f \circ g))(a, b) = (\Delta f)(g(a), g(b)) \cdot (\Delta g)(a, b).
\]
For the product, the Leibniz rule states that
\[
(\Delta (f \cdot g))(a,b) = f(a) \cdot (\Delta g)(a,b) + (\Delta f)(a,b) \cdot g(b).
\]
Higher-order divided differences satisfies similar rules.
\end{fact}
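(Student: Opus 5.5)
The plan is to verify both identities directly from the definition of the first divided difference at distinct points, and then pass to the confluent case by continuity. Throughout, $f$ and $g$ are at least continuously differentiable, so $\Delta f$ and $\Delta g$ are continuous on $\R^2$ with $\Delta f(a,a) = \Diff f(a)$.

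\emph{Leibniz rule.} Fix distinct $a, b \in \R$. Add and subtract $f(a) g(b)$ to get
\[
f(a)g(a) - f(b)g(b) = f(a)\,\big(g(a) - g(b)\big) + \big(f(a) - f(b)\big)\, g(b).
\]
Dividing by $a - b$ gives $(\Delta(f\cdot g))(a,b) = f(a)\,(\Delta g)(a,b) + (\Delta f)(a,b)\, g(b)$. Both sides are continuous in $(a,b)$, since $f, g, \Delta f, \Delta g$ are continuous. Letting $b \to a$ therefore extends the identity to the diagonal, where it becomes the ordinary product rule.

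\emph{Chain rule.} Fix distinct $a, b$ and split into two cases.
\begin{itemize}
\item If $g(a) \neq g(b)$, multiply and divide by $g(a) - g(b)$:
\[
\frac{f(g(a)) - f(g(b))}{a - b} = \frac{f(g(a)) - f(g(b))}{g(a) - g(b)} \cdot \frac{g(a) - g(b)}{a - b}
= (\Delta f)(g(a), g(b)) \cdot (\Delta g)(a,b).
\]
\item If $g(a) = g(b)$, the left side is $0$. On the right, $(\Delta g)(a,b) = 0$, and $(\Delta f)(g(a), g(a)) = \Diff f(g(a))$ is finite, so the right side is also $0$.
\end{itemize}
The right-hand side $(\Delta f)(g(a), g(b)) \cdot (\Delta g)(a,b)$ is continuous, being a composition and product of continuous functions. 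The left-hand side is continuous by definition. So the identity extends to $a = b$, where it reduces to the classical chain rule $\Diff(f\circ g)(a) = \Diff f(g(a))\, \Diff g(a)$.

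\emph{Higher-order rules.} The remark that higher-order divided differences satisfy similar rules is only asserted, not stated precisely. For Leibniz, I would establish the Steffensen form
\[
\Delta^k(fg)(a_0,\dots,a_k) = \sum_{j=0}^k \Delta^j f(a_0,\dots,a_j)\, \Delta^{k-j} g(a_j,\dots,a_k),
\]
where $\Delta^0 f(a_0) \coloneqq f(a_0)$. The proof is by induction on $k$ from the recursive definition, with the first-order identity above as the base case. For the inductive step, expand $\Delta^{k-1}(fg)$ at $(a_0,\dots,a_{k-1})$ and at $(a_1,\dots,a_k)$ using the hypothesis, subtract, and regroup terms by adding and subtracting mixed products, exactly as in the $k=1$ case. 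The confluent cases again follow by continuity. I expect this index bookkeeping in the inductive regrouping to be the only real obstacle; everything else is elementary algebra.
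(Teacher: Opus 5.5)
Your proposal is correct and follows the route the paper indicates. The paper's only justification is that these rules ``follow from the definition and the continuity properties of the divided differences,'' which is exactly your verification at distinct points followed by extension to the diagonal, and your Steffensen formula does go through by induction from the paper's recursive definition.

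One small addition: the higher-order rule the paper actually invokes (in the proof of \cref{prop:interp-sum}) is the chain rule for an affine inner map $\ell(x) = \alpha x + \beta$, namely $\Delta^k (f \circ \ell)(a_0, \dots, a_k) = \alpha^k \, \Delta^k f(\ell(a_0), \dots, \ell(a_k))$. It follows from the recursion by a one-line induction and is worth stating alongside your Leibniz extension.
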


Divided differences admit an integral representation, called the
Genocchi--Hermite formula, that exposes additional properties.
For $k \in \N$, the \term{probability simplex} of dimension $k$ is the set
\[
\set{T}_k \coloneqq \{ \vct{\tau} \geq \vct{0} : \ip{ \vct{1} }{\vct{\tau}} = 1 \} \subset \R^{k+1}.
\]
The simplex $\set{T}_k$ has volume $(k!)^{-1}$.

\begin{fact}[Divided differences: Integral representation] \label{fact:diff-integral}
Let $f : \R \to \R$ be a smooth function.  For $k \in \N$
and points $a_0, \dots, a_{k} \in \R$, the divided difference
can be expressed as 
\[
(\Delta^{k} f)(a_0, \dots, a_{k}) = \int_{\set{T}_{k}} (\Diff^k f)(\tau_0 a_0 + \dots + \tau_k a_k) \idiff{\vct{\tau}}.
\]
In particular, $(\Delta^k f)(a, \dots, a) = (\Diff^k f)(a) / k!$.  Furthermore,
\begin{equation} \label{eqn:diff-deriv-bd}
\norm{ \Delta^k f }_{\sup} \leq \frac{1}{k!} \cdot \norm{ \Diff^k f }_{\sup}.
\end{equation}
\end{fact}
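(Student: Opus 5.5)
The Genocchi--Hermite formula is the case $k \geq 1$ of an induction on the order $k$, and I would prove it that way, using the recursive definition of $\Delta^k f$ together with the fundamental theorem of calculus. Throughout, I parametrize the simplex $\set{T}_k$ by its last $k$ coordinates $(\tau_1, \dots, \tau_k)$, with $\tau_0 = 1 - \sum_{i \geq 1} \tau_i$. The measure $\idiff{\vct{\tau}}$ is Lebesgue measure in these coordinates, which is consistent with the stated volume $(k!)^{-1}$.

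\textbf{Base case.} For $k = 1$ and $a_0 \neq a_1$, the substitution $x = a_1 + \tau_0 (a_0 - a_1)$ gives
\[
\int_0^1 \Diff f(\tau_0 a_0 + (1-\tau_0) a_1) \idiff{\tau_0} = \frac{f(a_0) - f(a_1)}{a_0 - a_1}.
\]
Both sides are continuous in $(a_0,a_1)$, so the identity extends to the confluent case $a_0 = a_1$.

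\textbf{Inductive step.} Assume the formula at order $k-1$ and fix points with $a_0 \neq a_k$. I integrate $\Diff^k f$ over $\set{T}_k$ by first integrating along the edge direction $e_0 - e_k$. Concretely, fix the middle weights $(\tau_1, \dots, \tau_{k-1})$ with $s \coloneqq \tau_0 + \tau_k \leq 1$. Write $c \coloneqq \sum_{i=1}^{k-1} \tau_i a_i$, so the argument of $\Diff^k f$ is $c + \tau_0 a_0 + \tau_k a_k$ with $\tau_0 + \tau_k = s$. The inner integral over $\tau_k \in [0,s]$ is
\[
\frac{\Diff^{k-1} f(c + s a_0) - \Diff^{k-1} f(c + s a_k)}{a_0 - a_k}.
\]
At the endpoint $\tau_k = 0$, the remaining weights $(s, \tau_1, \dots, \tau_{k-1})$ range over $\set{T}_{k-1}$ attached to the points $(a_0, \dots, a_{k-1})$. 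At the endpoint $\tau_0 = 0$, the weights $(\tau_1, \dots, \tau_{k-1}, s)$ range over $\set{T}_{k-1}$ attached to $(a_1, \dots, a_k)$. Integrating over the middle weights and applying the inductive hypothesis to each term produces
\[
\frac{\Delta^{k-1} f(a_0, \dots, a_{k-1}) - \Delta^{k-1} f(a_1, \dots, a_k)}{a_0 - a_k},
\]
which is exactly the recursive definition of $\Delta^k f(a_0, \dots, a_k)$. The confluent case $a_0 = a_k$ then follows from continuity: the right-hand side of the formula is continuous in the points by dominated convergence, since $\Diff^k f$ is continuous and hence bounded on compact sets.

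\textbf{Main obstacle.} The step that needs the most care is the bookkeeping in the change of variables. I must check that the Jacobian is $1$ in the chosen coordinates. I must also check that each boundary face is identified with $\set{T}_{k-1}$ using the correct ordering of the points. For the first face the ordering is immediate. For the second face it relies on the symmetry of $\Delta^{k-1} f$ in its arguments, which the appendix records.

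\textbf{Consequences.} Setting all the points equal to $a$ gives $\Delta^k f(a, \dots, a) = \Diff^k f(a) \cdot \mathrm{vol}(\set{T}_k) = \Diff^k f(a)/k!$. Bounding the integrand by $\norm{\Diff^k f}_{\sup}$ and integrating over a region of volume $1/k!$ gives the estimate~\eqref{eqn:diff-deriv-bd}.
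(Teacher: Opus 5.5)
The paper gives no proof of this Fact. It quotes the Genocchi--Hermite formula as standard and refers to de Boor and Higham, so there is no internal argument to compare against.

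Your induction on $k$ is the standard textbook proof, and it is correct as written. It combines the defining recursion, Fubini in the edge variable $\tau_k$, and the fundamental theorem of calculus. The chart by $(\tau_1,\dots,\tau_k)$ matches the stated volume $1/k!$. The inner integral over $\tau_k \in [0,s]$ produces the two faces with the right sign. The continuity argument for $a_0 = a_k$ is legitimate, and it is in fact what shows that the continuous extension demanded in the definition exists.

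\textbf{One step to tighten.} For the second face you apply the inductive hypothesis to the ordering $(a_k, a_1, \dots, a_{k-1})$ and then invoke the symmetry of $\Delta^{k-1} f$. The paper's justification of that symmetry (``by iteration'') is thin. The recursion by itself only makes the reversal symmetry evident. Full symmetry is usually derived either from the explicit formula $\sum_i f(a_i)/\prod_{j \neq i}(a_i - a_j)$ for distinct nodes or from the Genocchi--Hermite formula itself. Citing it here therefore risks circularity unless you establish it separately.

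\textbf{Suggested fix.} Use the Jacobian check you already flagged. Permuting the barycentric coordinates is an affine map of the chart onto itself with determinant $\pm 1$. Hence $\int_{\set{T}_{k-1}} g(\tau_0 b_0 + \dots + \tau_{k-1} b_{k-1}) \idiff{\vct{\tau}}$ is invariant under permutations of $(b_0,\dots,b_{k-1})$. This identifies the second face with $\Delta^{k-1} f(a_1,\dots,a_k)$ directly and keeps your induction self-contained. It also delivers the symmetry of $\Delta^k f$ as a corollary, rather than as an input.
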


If the function $\abs{ (\Diff^k f ) }$ is \hilite{convex}, then we can bound the divided
difference by the derivative at one of the arguments:
\[
\abs{ (\Delta^k f)(a_0, \dots, a_k) } \leq \frac{1}{k!} \max\nolimits_{i=0,\dots,k} \abs{ (\Diff^k f)(a_i) }.
\]
Indeed, the integrand achieves its maximum at an extreme point of the simplex,
namely one of the standard basis vectors.

\section{Rosenthal inequalities}
\label{app:rosenthal}

Rosenthal inequalities~\cite{Ros70:Subspaces-Lp}
give bounds for the $\set{L}_p$ norm of an independent
sum in terms of the properties of the summands.
This appendix present variants in the spirit of
results developed by
Nagaev, Pinelis, and Utev~\cite{NP77:Some-Inequalities,PU84:Estimates-Moments}
in the scalar case.
The more recent matrix extensions are adapted
from Mackey et al.~\cite{MJCFT14:Matrix-Concentration}.

\subsection{Scalar Rosenthal inequalities}

In the scalar setting, the Rosenthal inequalities are well established.
Here is a version with explicit constants.

\begin{fact}[Rosenthal: Scalar case] \label{fact:rosenthal-scalar}
Consider an independent sum $X  \coloneqq \sum_{i=1}^n S_i$ of scalar random variables.

\begin{itemize} \setlength{\itemsep}{4pt}
\item	\textbf{Positive case.}  Assume that the summands $S_i$ are \hilite{positive}.
For $p = 1$ and $p \geq 1.5$,
\[
\norm{ X }_{2p} \leq \left[ (\Expect X)^{1/2} + \sqrt{2p-1} \cdot \norm{ \max\nolimits_i S_i }_{2p}^{1/2} \right]^{2}.
\]
When $1 < p < 1.5$, the constant factor on the maximum is at most $2$.

\item	\textbf{Centered case.}  Assume that the summands $S_i$ are \hilite{centered}.
For $p = 1/2$ and $p = 1$ and $p \geq 1.5$,
\[
\norm{ X }_{4p} \leq \sqrt{4p-1} \cdot (\Expect X^2)^{1/2} + \sqrt{(4p-1)(2p-1)} \cdot \norm{ \max\nolimits_i \abs{S_i} }_{4p}.
\]
\end{itemize}
\end{fact}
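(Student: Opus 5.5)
The proof runs the exchangeable-pair (leave-one-out) argument of \cref{sec:cov-scalar} directly at the level of moments: bound $\Expect X^{2p}$ (resp.\ $\Expect\abs{X}^{4p}$) by a copy of itself raised to a lower power, then cancel. I treat the positive case first and then reduce the centered case to it.

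\emph{Positive case.} Set $M \coloneqq \max_i S_i$ and $X_{-i} \coloneqq X - S_i$, which is independent from $S_i$ and satisfies $0 \leq X_{-i} \leq X$. The case $p=1$ is immediate, so assume $p \geq 1.5$ (the argument in fact works for all $p \geq 1$). Write
\[
\Expect X^{2p} = \sum_{i=1}^n \Expect[ S_i X^{2p-1} ].
\]
Since $a \coloneqq 2p-1 \geq 1$, the function $x \mapsto x^{a}$ is convex on $\R_+$, so for $0 \leq y \leq x$ we have $x^a - y^a \leq a (x-y) x^{a-1}$. Therefore
\[
\Expect[S_i X^{2p-1}] \leq \Expect[S_i]\, \Expect[X_{-i}^{2p-1}] + (2p-1)\Expect[S_i^2 X^{2p-2}]
	\leq \Expect[S_i]\, \Expect[X^{2p-1}] + (2p-1)\Expect[M S_i X^{2p-2}].
\]
Summing over $i$ gives $\Expect X^{2p} \leq \Expect[X] \, \Expect[X^{2p-1}] + (2p-1)\Expect[M X^{2p-1}]$. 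Applying H\"older's inequality with exponents $(2p, 2p/(2p-1))$ and dividing by $\norm{X}_{2p}^{2p-1}$ yields
\[
\norm{X}_{2p} \leq \Expect X + (2p-1)\norm{M}_{2p}.
\]
This is already stronger than the claim, because $[\alpha^{1/2} + c\beta^{1/2}]^2 \geq \alpha + c^2 \beta$ for $\alpha, \beta, c \geq 0$.

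\emph{Centered case.} Let $q \coloneqq 4p$. The case $p=1/2$ is the identity $\norm{X}_2 = (\Expect X^2)^{1/2}$, so assume $q \geq 4$. Set $g(x) \coloneqq \sgn(x)\abs{x}^{q-1}$, and use the exchangeable counterpart $X'$ from \cref{sec:indep-sum-exch}. \Cref{prop:discrete-ibp-sum}, applied with $\Expect X = 0$, gives
\[
\Expect \abs{X}^q = \frac{n}{2}\Expect[(X-X')(g(X)-g(X'))].
\]
Next I prove the scalar inequality
\[
(a-b)(g(a)-g(b)) \leq \frac{q-1}{2}(a-b)^2 \big(\abs{a}^{q-2}+\abs{b}^{q-2}\big).
\]
It follows from $g(a)-g(b) = (a-b)\int_0^1 g'(\tau a + (1-\tau) b)\idiff{\tau}$ together with convexity of $g' = (q-1)\abs{\cdot}^{q-2}$, which holds since $q \geq 3$. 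Exchangeability of $(X,X')$ then gives
\[
\Expect\abs{X}^q \leq (q-1)\, \Expect\big[ V \abs{X}^{q-2} \big],
\qquad V \coloneqq \frac{n}{2}\Expect\big[(X-X')^2 \lcondbar S_1,\dots,S_n\big] = \frac{1}{2}\sum_{i=1}^n \big(S_i^2 + \Expect S_i^2\big).
\]
H\"older's inequality with exponents $(q/2, q/(q-2))$ yields $\norm{X}_q^2 \leq (q-1)\norm{V}_{q/2}$. Then
\[
\norm{V}_{2p} \leq \tfrac12\Big(\Expect X^2 + \lnorm{ \sum\nolimits_i S_i^2 }_{2p}\Big).
\]
Apply the positive case to the positive summands $S_i^2$. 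This requires $p \geq 1$, so the exceptional range $1 < p < 1.5$ of the positive case does not matter here. It gives
\[
\lnorm{ \sum\nolimits_i S_i^2 }_{2p} \leq \big[(\Expect X^2)^{1/2} + \sqrt{2p-1}\,\norm{\max\nolimits_i \abs{S_i}}_{4p}\big]^2,
\]
and this square also dominates $\Expect X^2$. Hence
\[
\norm{X}_{4p}^2 \leq (4p-1)\big[(\Expect X^2)^{1/2} + \sqrt{2p-1}\,\norm{\max\nolimits_i \abs{S_i}}_{4p}\big]^2,
\]
and taking square roots gives exactly the stated constants.

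The main obstacle is the centered case. A naive Taylor expansion around $X_{-i}$ produces terms like $\abs{X_{-i} + \theta S_i}^{q-2}$ that are awkward to compare with $\abs{X}^{q-2}$. The symmetrized pair identity, combined with the convexity inequality for $(a-b)(g(a)-g(b))$, sidesteps this: exchangeability places everything on $\abs{X}^{q-2}$, and the remaining conditional variance $V$ is handled by the positive case.
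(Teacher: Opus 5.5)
Your proof is correct, and in the positive case it takes a different route from the paper. The paper derives both cases from the BDG-type inequality of \cref{fact:bdg-scalar}, which it cites from Chatterjee. For positive summands it centers and applies BDG to $X - \Expect X$. It then bounds $\sum_i S_i^2$ by $\max_i S_i \cdot X$, uses Cauchy--Schwarz, and solves the resulting quadratic inequality in $\norm{X}_{2p}^{1/2}$.

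You instead expand $\Expect X^{2p} = \sum_i \Expect[S_i X^{2p-1}]$. You then use the independence of $S_i$ and $X - S_i$, together with the convexity and monotonicity of $x \mapsto x^{2p-1}$ on $\R_+$. This avoids centering and the quadratic. It also works uniformly for all $p \geq 1$, so the exceptional range $1 < p < 1.5$ disappears. Finally, it yields the sharper estimate $\norm{X}_{2p} \leq \Expect X + (2p-1)\norm{\max\nolimits_i S_i}_{2p}$, which lacks the cross term $2\sqrt{(2p-1)\,\Expect X\,\norm{\max\nolimits_i S_i}_{2p}}$.

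In the centered case your route coincides with the paper's after the first step. You re-derive the BDG inequality from the exchangeable-pair identity of \cref{prop:discrete-ibp-sum} and the Hermite--Hadamard bound for the convex function $\abs{\cdot}^{q-2}$; this is essentially Chatterjee's argument. You then feed $V$ into the positive case exactly as the paper does, so your proof is self-contained.

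What the paper's route buys is uniformity with the matrix setting. The same BDG-plus-quadratic template yields \cref{fact:rosenthal-mtx} via the matrix BDG inequality. Your positive-case step instead relies on scalar order facts, such as $X_{-i}^{2p-1} \leq X^{2p-1}$ and the pointwise tangent-line bound $X^{2p-1} - X_{-i}^{2p-1} \leq (2p-1) S_i X^{2p-2}$. These do not transfer directly to noncommuting matrices, since $t \mapsto t^{a}$ is not operator monotone for $a > 1$.
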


The positive case is the monomial moment analog of the Chernoff inequality,
while the centered case is the analog of the Bernstein inequality.
For completeness, we offer a simple proof based on a
BDG-type inequality~\cite[Thm.~1.5(iii)]{Cha07:Steins-Method}.

\begin{fact}[BDG inequality] \label{fact:bdg-scalar}
Let $Y \coloneqq \sum_{i=1}^n W_i$ be an independent sum of centered, real random variables.
Define the variance proxy $V_Y \coloneqq \frac{1}{2} \sum_{i=1}^n (W_i^2 + \Expect W_i^2)$.
For $p = 1$ and $p \geq 1.5$,
\[
\norm{ Y }_{2p} \leq \sqrt{2p-1} \cdot \lnorm{ V_Y }_{p}^{1/2}.
\]
When $1 < p < 1.5$, the constant is at most $\sqrt{4p-2} \leq 2$.
\end{fact}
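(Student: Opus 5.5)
We will prove the BDG inequality by an exchangeable-pair argument modeled on \cref{prop:discrete-ibp-sum}, in the style of Chatterjee. For $p = 1$ the claim is an identity. Independence and centering give $\Expect Y^2 = \sum_i \Expect W_i^2 = \Expect V_Y$, so $\norm{Y}_2 = \norm{V_Y}_1^{1/2}$. Assume therefore that $p \geq 1.5$, and consider the odd power function $f(a) \coloneqq \sgn(a)\abs{a}^{2p-1}$. Since $Y$ is centered, $\Expect Y^{2p} = \Expect[Y f(Y)]$.

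Construct the exchangeable counterpart $Y' = Y + (W_I' - W_I)$ as in \cref{sec:indep-sum-exch}. The covariance identity \eqref{eqn:discrete-ibp-sum} then gives
\[
\Expect \abs{Y}^{2p} = \frac{n}{2}\Expect\big[(Y-Y')(f(Y)-f(Y'))\big] = \frac{n}{2}\Expect\big[(Y-Y')^2\, \Delta f(Y,Y')\big].
\]
The key step is a convexity bound on the divided difference. By the Genocchi--Hermite formula (\cref{fact:diff-integral}), $\Delta f(Y,Y') = \int_0^1 (2p-1)\abs{\tau Y + (1-\tau)Y'}^{2p-2}\idiff{\tau}$. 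When $2p-2 \geq 1$, that is, when $p \geq 1.5$, the map $a \mapsto \abs{a}^{2p-2}$ is convex, so
\[
\Delta f(Y,Y') \leq \frac{2p-1}{2}\big(\abs{Y}^{2p-2} + \abs{Y'}^{2p-2}\big).
\]
Exchangeability of $(Y,Y')$ lets us swap the two terms, which yields
\[
\Expect\abs{Y}^{2p} \leq \frac{(2p-1)n}{2}\Expect\big[(Y-Y')^2\abs{Y}^{2p-2}\big] = (2p-1)\,\Expect\Big[\Expect\big[\tfrac{n}{2}(Y-Y')^2 \lcondbar W\big]\,\abs{Y}^{2p-2}\Big].
\]
Conditioning on $W = (W_1,\dots,W_n)$ and averaging over $I$ and the independent copy $W_i'$ gives
\[
\frac{n}{2}\Expect\big[(Y-Y')^2 \condbar W\big] = \frac{1}{2}\sum_i \big(W_i^2 - 2W_i\Expect W_i' + \Expect W_i'^2\big) = V_Y.
\]
Here we use $\Expect W_i' = 0$, which makes the cross term vanish. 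This is exactly where the variance proxy $V_Y$ comes from.

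To conclude, apply Hölder's inequality with exponents $p$ and $p/(p-1)$:
\[
\Expect\abs{Y}^{2p} \leq (2p-1)\norm{V_Y}_p\,(\Expect\abs{Y}^{2p})^{1-1/p}.
\]
Rearranging gives $\norm{Y}_{2p}^2 \leq (2p-1)\norm{V_Y}_p$, which is the claim. The division by $(\Expect\abs{Y}^{2p})^{1-1/p}$ is harmless: if that quantity vanishes, the claim is trivial.

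The main obstacle is the range $1 < p < 1.5$. There $\abs{a}^{2p-2}$ is concave, so the chord bound fails. For this range we plan to replace the convexity step with the elementary inequality
\[
\abs{\Delta f(a,b)} \leq (2p-1)\max(\abs{a},\abs{b})^{2p-2} \leq (2p-1)\big(\abs{a}^{2p-2}+\abs{b}^{2p-2}\big).
\]
This holds because $\abs{a}^{2p-2}$ is monotone in $\abs{a}$ and the integrand is bounded by its value at the larger endpoint. Running the same symmetrization then gives the constant $2(2p-1) = 4p-2$ in place of $2p-1$, which yields the stated factor $\sqrt{4p-2} \leq 2$.
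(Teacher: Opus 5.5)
Your argument is correct and is essentially the proof the paper relies on. The paper does not prove this fact itself; it imports it from Chatterjee's exchangeable-pair moment inequality \cite[Thm.~1.5(iii)]{Cha07:Steins-Method} and its real-$p$ version \cite[Thm.~7.1]{MJCFT14:Matrix-Concentration}, and their proof is exactly your chain: the covariance identity, the convexity bound $\Delta f(a,b) \leq \tfrac{1}{2}\big(\Diff f(a) + \Diff f(b)\big)$ for $p \geq 1.5$ (with a factor-$2$ loss when $1 < p < 1.5$), exchangeability, the identity $\tfrac{n}{2}\Expect[(Y-Y')^2 \condbar W] = V_Y$, and H\"older.

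The one point you pass over, that $\Expect\abs{Y}^{2p}$ might be infinite when you divide, is harmless. Since $V_Y \geq \tfrac{1}{2}\sum_i W_i^2$, finiteness of $\norm{V_Y}_p$ forces $\Expect\abs{W_i}^{2p} < \infty$ for every $i$, and hence $\Expect\abs{Y}^{2p} < \infty$.
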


\begin{proof}[Proof of \cref{fact:rosenthal-scalar}]
We begin with the case $X  \coloneqq \sum_{i=1}^n S_i$ where the summands 
are positive: $S_i \geq 0$.
Introduce the centered independent sum $Y  \coloneqq X - \Expect X$.
Its variance proxy satisfies
\[
V_Y = \frac{1}{2} \sum_{i=1}^n \big( (S_i - \Expect S_i)^2 + \Expect (S_i - \Expect S_i)^2 \big)
	\leq \frac{1}{2} \sum_{i=1}^n (S_i^2 + \Expect S_i^2).
\]
The latter inequality relies on positivity.
For $p = 1$ and $p \geq 1.5$, the BDG inequality (\cref{fact:bdg-scalar}) yields
\begin{align*}
\norm{ Y }_{2p}
	&\leq \sqrt{2p-1} \cdot \lnorm{ \frac{1}{2} \sum_{i=1}^n (S_i^2 + \Expect S_i^2) }_{p}^{1/2} \\
	&\leq \sqrt{2p-1} \cdot \lnorm{ \sum_{i=1}^n S_i^2 }_{p}^{1/2}
	\leq \sqrt{2p-1} \cdot \lnorm{ \max\nolimits_i \abs{S_i} \cdot \sum_{i=1}^n S_i }_{p}^{1/2} \\
	&\leq \sqrt{2p-1} \cdot \norm{ \max\nolimits_i \abs{S_i} }_{2p}^{1/2} \cdot \lnorm{ \sum_{i=1}^n S_i }_{2p}^{1/2}
	= \sqrt{2p-1} \cdot \norm{ \max\nolimits_i \abs{S_i} }_{2p}^{1/2} \cdot \norm{ X }_{2p}^{1/2}.
\end{align*}
The second line follow from the triangle inequality and Jensen's inequality;
the third line depends on Cauchy--Schwarz.
Returning to the original sum $X$, we center and invoke the last display:
\begin{align*}
E^2 \coloneqq \norm{ X }_{2p} \leq \abs{\Expect X} + \norm{ X - \Expect X}_{2p}
	\leq  \abs{\Expect X} + \sqrt{2p-1} \cdot \norm{ \max\nolimits_i \abs{S_i} }_{2p}^{1/2} \cdot E.
\end{align*}
Solve the quadratic inequality to reach the stated result.

Now, consider the case $Y  \coloneqq \sum_{i=1}^n W_i$ where the summands $W_i$ are centered.
Its variance proxy satisfies
\[
V_Y = \frac{1}{2} \sum_{i=1}^n (W_i^2 + \Expect W_i^2).
\]
For $p = 1/2$ and $p = 1$ and $p \geq 1.5$, we can apply the BDG inequality (\cref{fact:bdg-scalar})
to reach
\[
\norm{ Y }_{4p} \leq \sqrt{4p-1} \cdot \lnorm{ \frac{1}{2} \sum_{i=1}^n (W_i^2 + \Expect W_i^2) }_{2p}^{1/2}
	\leq \sqrt{4p-1} \cdot \lnorm{ \sum_{i=1}^n W_i^2 }_{2p}^{1/2}.
\]
This is an independent sum of positive random variables, so we can apply the first result:
\[
\norm{ Y }_{4p} \leq \sqrt{4p-1} \left[ \abs{ \Expect Y^2 }^{1/2} + \sqrt{2p-1} \cdot \norm{ \max\nolimits_i W_i^2 }_{2p}^{1/2} \right].
\]
Rewrite this inequality to obtain the stated result.
\end{proof}

\subsection{Matrix Rosenthal inequalities}

Scalar Rosenthal inequalities extend to matrices with the same proof concept,
although the constants may be slightly different.

\begin{fact}[Rosenthal: Matrix case] \label{fact:rosenthal-mtx}
Consider an independent sum $\mtx{X}  \coloneqq \sum_{i=1}^n \mtx{S}_i$
of random self-adjoint matrices.  Let $\norm{\cdot}_p$ denote
the normalized $\set{L}_p$ norm, as in~\eqref{eqn:Lq-norm}.  %

\begin{itemize} \setlength{\itemsep}{4pt}
\item	\textbf{Positive case.}  Assume that the summands $\mtx{S}_i$ are \hilite{positive semidefinite}.
For $p = 1$ and $p \geq 1.5$,
\[
\norm{\mtx{X}}_{2p} \leq \left[ \norm{ \Expect \mtx{X} }_{2p}^{1/2}
	+ \sqrt{4p-2} \cdot \norm{ \max\nolimits_i \norm{\mtx{S}_i}}_{2p}^{1/2} \right]^2
\]
When $1 < p < 1.5$, the constant attached to the maximum is at most
$\sqrt{8p-4} < 4$.

\item	\textbf{Centered case.}  Assume that the summands $\mtx{S}_i$ are \hilite{centered}.
For $p = 1/2$ and $p = 1$ and $p \geq 1.5$,
\[
\norm{\mtx{X}}_{4p} \leq \sqrt{4p-1} \cdot \norm{ (\Expect \mtx{X}^2)^{1/2} }_{4p}
	+ \sqrt{(4p-1)(4p-2)} \cdot \norm{ \max\nolimits_i \norm{\mtx{S}_i}}_{4p}.
\]
\end{itemize}
\end{fact}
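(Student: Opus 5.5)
The approach is to copy the proof of \cref{fact:rosenthal-scalar}, replacing the scalar BDG inequality with its matrix analog from Mackey et al.~\cite[Thm.~7.1]{MJCFT14:Matrix-Concentration}. For a centered independent sum $\mtx{Y} = \sum_i \mtx{W}_i$ of self-adjoint matrices, with variance proxy $\mtx{V}_{\mtx{Y}} \coloneqq \frac12 \sum_i (\mtx{W}_i^2 + \Expect \mtx{W}_i^2)$, that result gives
\[
\norm{\mtx{Y}}_{2p} \leq \sqrt{2p-1}\cdot \norm{\mtx{V}_{\mtx{Y}}}_p^{1/2}
\]
for $p=1$ and $p\geq 1.5$. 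For $1<p<1.5$ the constant is at most $\sqrt{4p-2}$. I would prove this with the exchangeable pair $(\mtx{Y},\mtx{Y}')$ from \cref{sec:mtx-cov-id} and the covariance identity~\eqref{eqn:mtx-cov-id} applied to $\mtx{F}(\mtx{Y}) = \sgn(\mtx{Y})\abs{\mtx{Y}}^{2p-1}$. The mean value trace inequality then bounds $\ntr[(\mtx{Y}-\mtx{Y}')(\mtx{F}(\mtx{Y})-\mtx{F}(\mtx{Y}'))]$ by $\frac{2p-1}{2}\ntr[(\mtx{Y}-\mtx{Y}')^2(\abs{\mtx{Y}}^{2p-2}+\abs{\mtx{Y}'}^{2p-2})]$. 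Exchangeability and conditioning turn $\frac n2\Expect[(\mtx{Y}-\mtx{Y}')^2 \condbar \mtx{Y}]$ into $\mtx{V}_{\mtx{Y}}$ (in conditional expectation), and H\"older closes the estimate.

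\textbf{Positive case.} Set $\mtx{Y} = \mtx{X}-\Expect\mtx{X}$. Because $\mtx{S}_i \succeq \mtx{0}$, we have $(\mtx{S}_i-\Expect\mtx{S}_i)^2 + \Expect(\mtx{S}_i-\Expect\mtx{S}_i)^2 \preceq \mtx{S}_i^2 + \Expect\mtx{S}_i^2$ in the semidefinite order, since $\Expect(\mtx{S}-\Expect \mtx{S})^2 = \Expect \mtx{S}^2 - (\Expect \mtx{S})^2$ and the cross terms are controlled using $\Expect \mtx{S}\succeq \mtx{0}$. Trace monotonicity of $\norm{\cdot}_p$ on psd matrices and the triangle inequality then give $\norm{\mtx{V}_{\mtx{Y}}}_p \leq \norm{\sum_i \mtx{S}_i^2}_p$. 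Note that $\norm{\frac12\sum_i\Expect \mtx{S}_i^2}_p \le \frac12\norm{\sum_i\mtx{S}_i^2}_p$ by Jensen, but the other half must be handled by the triangle inequality, which loses a factor $2$; this is the source of $\sqrt{4p-2}$ in place of $\sqrt{2p-1}$. Next, $\mtx{S}_i^2 \preceq \norm{\mtx{S}_i}\mtx{S}_i \preceq \max_j\norm{\mtx{S}_j}\,\mtx{S}_i$, so $\sum_i\mtx{S}_i^2 \preceq \max_j\norm{\mtx{S}_j}\cdot\mtx{X}$. H\"older for the expected trace then yields $\norm{\sum_i \mtx{S}_i^2}_p \le \norm{\max_i\norm{\mtx{S}_i}}_{2p}\norm{\mtx{X}}_{2p}$. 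Writing $E^2 = \norm{\mtx{X}}_{2p}$, we get
\[
E^2 \le \norm{\Expect\mtx{X}}_{2p} + \sqrt{4p-2}\,\norm{\max_i\norm{\mtx{S}_i}}_{2p}^{1/2} E.
\]
Solving this quadratic gives $E \le a+b$ with $a = \norm{\Expect\mtx{X}}_{2p}^{1/2}$ and $b$ the coefficient of $E$, which is the claim. The $1<p<1.5$ constant comes from the same computation with the weaker BDG constant.

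\textbf{Centered case.} Apply matrix BDG at exponent $4p$ (that is, $2p$ in place of $p$, so the same admissible $p$-values arise) to get $\norm{\mtx{X}}_{4p}\le\sqrt{4p-1}\,\norm{\frac12\sum_i(\mtx{S}_i^2+\Expect\mtx{S}_i^2)}_{2p}^{1/2}$. The matrix $\frac12\sum_i(\mtx{S}_i^2+\Expect\mtx{S}_i^2)$ is a sum of independent psd summands $\mtx{T}_i = \frac12(\mtx{S}_i^2+\Expect \mtx{S}_i^2)$ with expectation $\Expect\mtx{X}^2$. Apply the positive case at exponent $2p$, using $\norm{\max_i \norm{\mtx{T}_i}}_{2p}\le \norm{\max_i\norm{\mtx{S}_i}^2}_{2p}$ (with Jensen for the expectation part), to obtain
\[
\norm{\mtx{X}}_{4p}\le \sqrt{4p-1}\big[\norm{\Expect\mtx{X}^2}_{2p}^{1/2}+\sqrt{4p-2}\,\norm{\max_i\norm{\mtx{S}_i}}_{4p}\big],
\]
and note $\norm{\Expect\mtx{X}^2}_{2p}^{1/2} = \norm{(\Expect\mtx{X}^2)^{1/2}}_{4p}$. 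For $p=1/2$, the psd step is just $\norm{\cdot}_1 = \Expect\ntr$ and needs no Rosenthal bound.

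The main obstacle is the matrix BDG step, specifically the mean value trace inequality for $\sgn(a)\abs{a}^{2p-1}$ with the sharp constant $\frac{2p-1}{2}$ at noninteger $p$. This is where the restriction $p\ge1.5$ and the $\sqrt{8p-4}$ fallback enter. I would cite \cite{MJCFT14:Matrix-Concentration} for it rather than reprove it, and check carefully that the psd comparison of variance proxies in the positive case costs exactly the factor $2$.
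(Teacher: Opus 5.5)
Your skeleton is the paper's: matrix BDG from \cite[Thm.~7.1]{MJCFT14:Matrix-Concentration}, then $\sum_i \mtx{S}_i^2 \preceq \max_j \norm{\mtx{S}_j}\cdot \mtx{X}$ with Cauchy--Schwarz, the quadratic inequality, and the centered case obtained by feeding the BDG variance proxy into the positive case. Everything after the variance-proxy comparison in the positive case is fine, and so is your centered case. That comparison, however, is a genuine gap.

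You claim $(\mtx{S}_i - \Expect\mtx{S}_i)^2 + \Expect(\mtx{S}_i - \Expect\mtx{S}_i)^2 \preceq \mtx{S}_i^2 + \Expect\mtx{S}_i^2$. After expanding, this says exactly that the anticommutator $\mtx{S}_i(\Expect\mtx{S}_i) + (\Expect\mtx{S}_i)\mtx{S}_i$ is psd. That is the matrix analog of the scalar fact $S_i \Expect S_i \geq 0$, and it fails for noncommuting matrices. For example, let a single summand $\mtx{S}$ equal $\mtx{P} = \begin{bmatrix} 1 & 0 \\ 0 & 0 \end{bmatrix}$ or $\mtx{Q} = \begin{bmatrix} 1 & 1 \\ 1 & 1 \end{bmatrix}$ with probability $1/2$ each. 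On the event $\{\mtx{S} = \mtx{P}\}$ the anticommutator equals $\begin{bmatrix} 2 & 1/2 \\ 1/2 & 0 \end{bmatrix}$, whose determinant is $-1/4$.

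Your explanation of the factor $2$ is also not coherent. If the comparison held, the triangle inequality and Jensen would give $\norm{\mtx{V}_{\mtx{Y}}}_p \leq \frac12 \norm{\sum_i \mtx{S}_i^2}_p + \frac12 \norm{\sum_i \Expect \mtx{S}_i^2}_p \leq \norm{\sum_i \mtx{S}_i^2}_p$ with no loss at all. That would yield the scalar constant $\sqrt{2p-1}$, so the triangle inequality is not where anything is lost.

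The factor $2$ actually comes from the operator inequality $(\mtx{A} - \mtx{B})^2 \preceq 2\mtx{A}^2 + 2\mtx{B}^2$, which follows from $(\mtx{A}-\mtx{B})^2 + (\mtx{A}+\mtx{B})^2 = 2\mtx{A}^2 + 2\mtx{B}^2$. Write the proxy in exchangeable-pair form, $\mtx{V}_{\mtx{Y}} = \frac12 \sum_i \Expect[ (\mtx{S}_i - \mtx{S}_i')^2 \condbar \mtx{S}_1, \dots, \mtx{S}_n ]$. Then this inequality gives $\mtx{V}_{\mtx{Y}} \preceq \sum_i (\mtx{S}_i^2 + \Expect \mtx{S}_i^2)$. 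Equivalently, combine $(\mtx{S}_i - \Expect\mtx{S}_i)^2 \preceq 2\mtx{S}_i^2 + 2(\Expect\mtx{S}_i)^2$ with $(\Expect\mtx{S}_i)^2 \preceq \Expect\mtx{S}_i^2$. Monotonicity of the Schatten norm on psd matrices, the triangle inequality, and Jensen then give $\norm{\mtx{V}_{\mtx{Y}}}_p \leq 2 \norm{\sum_i \mtx{S}_i^2}_p$. This produces $\sqrt{2(2p-1)} = \sqrt{4p-2}$, and $\sqrt{2(4p-2)} = \sqrt{8p-4}$ when $1 < p < 1.5$. With this substitution, the rest of your positive case, and hence your centered case, goes through as written.
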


Much like the matrix Rosenthal inequality presented in~\cite[Cor.~7.4]{MJCFT14:Matrix-Concentration},
this statement follows from the matrix BDG inequality~\cite[Thm.~7.1]{MJCFT14:Matrix-Concentration}.
The variant stated here depends on an alternative bound for positive-semidefinite summands:
\[
\lnorm{ \sum_i \mtx{S}_i^2 }_{p} \leq \lnorm{ \max\nolimits_i \norm{\mtx{S}_i} \cdot \sum_i \mtx{S}_i }_{p}
	\leq \norm{ \max\nolimits_i \norm{\mtx{S}_i} }_{2p}^{1/2} \cdot \lnorm{ \sum_i \mtx{S}_i }_{2p}^{1/2}.
\]
This estimate exploits the fact that the normalized Schatten norms are increasing in the order $p$.
The second inequality is just Cauchy--Schwarz.  The rest of the details are the same
as in the paper~\cite{MJCFT14:Matrix-Concentration}, which mirrors the scalar
argument presented in the last subsection.

\section*{Acknowledgments}

JAT would like to thank Eitan Levin for introducing him to the
matrix difference calculus described in \cref{sec:mtx-calculus}.
Ramon van Handel provided valuable feedback on an early draft of this
manuscript.
The research was supported in part by NSF FRG Award \#1952777,
ONR BRC Award N-00014-18-1-2363, ONR Award N-00014-24-1-2223,
and the Caltech Carver Mead New Adventures fund.

\printbibliography[title=References]

\end{document}